\numberwithin{equation}{section} 
\newenvironment{pdeq}{ \left\{ \begin{aligned}}{\end{aligned}\right.}
\newcommand{\np}[1]{(#1)}
\newcommand{\nb}[1]{[#1]}
\newcommand{\bp}[1]{\big(#1\big)}
\newcommand{\bb}[1]{\big[#1\big]}
\newcommand{\Bp}[1]{\bigg(#1\bigg)}
\newcommand{\calk}{{\mathcal K}}
\newcommand{\calp}{{\mathcal P}}
\newcommand{\calt}{{\mathcal T}}
\newcommand{\R}{\mathbb{R}}
\newcommand{\C}{\mathbb{C}}
\newcommand{\Z}{\mathbb{Z}}
\newcommand{\N}{\mathbb{N}}
\DeclareMathOperator{\e}{e}
\DeclareMathOperator{\T}{T}
\DeclareMathOperator{\supp}{supp}
\DeclareMathOperator{\graph}{graph}
\newcommand{\TD}{\operatorname{Tr}_D}
\newcommand{\TN}{\operatorname{Tr}_N}
\newcommand{\ra}{\rightarrow}
\newcommand{\set}[1]{\ensuremath{\{#1\}}}
\newcommand{\setc}[2]{\ensuremath{\{#1\ \lvert\ #2\}}}
\newcommand{\closure}[2]{\overline{#1}^{#2}}
\newcommand{\seqN}[1]{\ensuremath{\set{#1}_{n=1}^\infty}}
\newcommand{\proj}{\calp}
\newcommand{\projcompl}{\calp_\bot}
\newcommand{\quotientmap}{\pi}
\newcommand{\grp}{G}
\newcommand{\dualgrp}{\widehat{G}}
\newcommand{\grpH}{H}
\newcommand{\dualgrpH}{\widehat{H}}
\newcommand{\torus}{{\mathbb T}}
\newcommand{\grad}{\nabla}
\newcommand{\pdn}[1]{\frac{\partial #1}{\partial n}}
\newcommand{\dx}{{\mathrm d}x}
\newcommand{\dg}{{\mathrm d}g}
\newcommand{\ds}{{\mathrm d}s}
\newcommand{\dt}{{\mathrm d}t}
\newcommand{\dS}{{\mathrm d}S}
\newcommand{\dxi}{{\mathrm d}\xi}
\newcommand{\SR}{\mathscr{S}}
\newcommand{\TDR}{\mathscr{S^\prime}}
\newcommand{\TDRper}{\mathscr{S^\prime_\bot}}
\newcommand{\ft}[1]{\widehat{#1}}
\newcommand{\ift}[1]{#1^\vee}
\newcommand{\FT}{\mathscr{F}}
\newcommand{\iFT}{\mathscr{F}^{-1}}
\newcommand{\PR}{\mathcal{P}}
\newcommand{\oPR}{\mathcal{P}_\bot}
\newcommand{\norm}[1]{\lVert#1\rVert}
\newcommand{\snorm}[1]{{\lvert #1 \rvert}}
\newcommand{\opnorm}[1]{{\vert\kern-0.25ex\vert\kern-0.25ex\vert #1 \vert\kern-0.25ex\vert\kern-0.25ex\vert}}
\newcommand{\WSR}[2]{W^{#1,#2}}
\newcommand{\WSRD}[2]{\dot{W}^{#1,#2}}
\newcommand{\CR}[1]{C^{#1}}  
\newcommand{\LR}[1]{L^{#1}}
\newcommand{\LRloc}[1]{L^{#1}_{loc}} 
\newcommand{\CRi}{\CR \infty}
\newcommand{\CRci}{\CR \infty_0}
\newcommand{\SolS}[1]{X_\bot^{#1}}
\newcommand{\TPD}{T^p_D}
\newcommand{\TPN}{T^p_N}
\newcommand{\TPDper}{T^p_{D,per}}
\newcommand{\TPNper}{T^p_{N,per}}
\newcommand{\LRper}[1]{L^{#1}_{\mathrm{per}}}
\newcommand{\WSRper}[2]{W^{#1,#2}_{\mathrm{per}}} 
\newcommand{\CRiper}{\CR{\infty}_{\mathrm{per}}}
\newcommand{\wvel}{w}
\newcommand{\uvel}{u}
\newcommand{\us}{\uvel_s}
\newcommand{\up}{\uvel_p}
\newcommand{\tin}{\text{in }}
\newcommand{\tif}{\text{if }}
\newcommand{\ton}{\text{on }}
\renewcommand{\epsilon}{\varepsilon}
\newcommand{\tay}{\calt}
\newcommand{\per}{\tay}
\newcommand{\dualrho}{\hat{\rho}}
\newcommand{\newCCtr}[2][d]{
\newcounter{#2}\setcounter{#2}{0}
\expandafter\xdef\csname kyedtheconst#2\endcsname{#1}
}
\newcommand{\Cc}[2][nolabel]{
\stepcounter{#2}
\expandafter\ensuremath{\csname kyedtheconst#2\endcsname_{\arabic{#2}}}
\ifthenelse{\equal{#1}{nolabel}}
{}
{\expandafter\xdef\csname kyedconst#1\endcsname
{\expandafter\ensuremath{\csname kyedtheconst#2\endcsname_{\arabic{#2}}}}}
}
\newcommand{\Ccn}[2][nolabel]{
\expandafter\ensuremath{\csname kyedtheconst#2\endcsname}
\ifthenelse{\equal{#1}{nolabel}}
{}
{\expandafter\xdef\csname kyedconst#1\endcsname
{\expandafter\ensuremath{\csname kyedtheconst#2\endcsname}}}
}
\newcommand{\CcSetCtr}[2]{
\setcounter{#1}{#2}
}
\newcommand{\Cclast}[1]{
\expandafter\ensuremath{\csname kyedtheconst#1\endcsname_{\arabic{#1}}}
}
\newcommand{\Ccllast}[1]{
\addtocounter{#1}{-1}
\expandafter\ensuremath{\csname kyedtheconst#1\endcsname_{\arabic{#1}}}
\addtocounter{#1}{1}
}
\newcommand{\const}[1]{
\expandafter{\ifcsname kyedconst#1\endcsname
  \csname kyedconst#1\endcsname
\else
  \errmessage{Undefined Kyedconstant #1.}%
\fi}
}
\theoremstyle{plain}
\newtheorem{thm}{Theorem}[section]
\newtheorem{lem}[thm]{Lemma}
\newtheorem{cor}[thm]{Corollary}
\theoremstyle{remark}
\newtheorem{rem}[thm]{Remark}
\begin{document}
\title{Nonlinear Wave Equation with Damping: Periodic Forcing and Non-Resonant Solutions to the Kuznetsov Equation}

\author{
Aday Celik\\ 
Fachbereich Mathematik\\
Technische Universit\"at Darmstadt\\
Schlossgartenstr. 7, 64289 Darmstadt, Germany\\
Email: {\texttt{celik@mathematik.tu-darmstadt.de}}
\and
Mads Kyed\\ 
Fachbereich Mathematik\\
Technische Universit\"at Darmstadt\\
Schlossgartenstr. 7, 64289 Darmstadt, Germany\\
Email: {\texttt{kyed@mathematik.tu-darmstadt.de}}
}

\date{\today}
\maketitle

\begin{abstract}
Existence of non-resonant solutions of time-periodic type are established for the Kuznetsov equation with a periodic forcing term. The
equation is considered in a three-dimensional whole-space, half-space and bounded domain, and with both non-homogeneous Dirichlet and 
Neumann boundary values. A method based on $\LR{p}$ estimates of the corresponding linearization, namely the wave equation with Kelvin-Voigt
damping, is employed.   

\end{abstract}

\noindent\textbf{MSC2010:} Primary 35L05, 35B10, 35B34.\\
\noindent\textbf{Keywords:} wave equation, damping, periodic solutions, maximal regularity.

\newCCtr[C]{C}
\newCCtr[M]{M}
\newCCtr[B]{B}
\newCCtr[\epsilon]{eps}
\CcSetCtr{eps}{-1}

\section{Introduction}

In a damped hyperbolic system with periodic forcing, resonance can be avoided if the energy from the external forces accumulated over a period is
dissipated via the damping mechanism. The existence of a time-periodic solution would be a manifestation hereof. Our aim in this article is 
to develop a method that can be used to ensure existence of this particular type of non-resonant solution for nonlinear hyperbolic systems. 
Although the method is generic in nature, we restrict our analysis to a nonlinear wave equation with  
Kelvin-Voigt damping in a three-dimensional domain. Specifically, we consider the Kuznetsov equation, which is a
nonlinear wave equation that describes acoustic wave propagation. As our main result, we show for any periodic forcing term that is sufficiently 
restricted in ``size'' existence of a time-periodic solution. 
We shall treat non-homogeneous boundary values of both Dirichlet and Neumann type.
We consider spatial domains $\Omega\subset\R^3$ that are either bounded, the half-space or the whole-space.  

Well-posedness of the initial-value problem for the Kuznetsov equation has only recently been established \cite{KaltenbacherLasiecka2011,KaltenbacherLasiecka2012,MeyerWilke2013}. Our result can be viewed as an extension of these results to the corresponding time-periodic problem. Related time-periodic problems have been studied by other authors over the years.
In particular we mention the work of \textsc{Kokocki} \cite{Kokocki2015}, where a class of nonlinear wave equations with Kelvin-Voigt damping, which do not contain the Kuznetsov equation though, are investigated.  

We will work in a setting of time-periodic functions and therefore take the whole of $\R$ as a time-axis. In the following, $(t,x)\in\R\times\Omega$ will always denote a time-variable $t$ and spatial variable $x$, respectively.  
The Kuznetsov equation with Dirichlet boundary condition then reads
\begin{align}\label{KuznetsovD}
\begin{pdeq}
\partial_{t}^2\uvel-\Delta\uvel - \lambda\partial_t\Delta\uvel - \partial_t\bp{\gamma\np{\partial_t\uvel}^2+\snorm{\grad\uvel}^2}&= f && \tin\R\times\Omega, \\
\uvel &= g && \ton\R\times\partial\Omega.
\end{pdeq}\tag{KD}
\end{align}
The corresponding Neumann problem reads
\begin{align}\label{KuznetsovN}
\begin{pdeq}
\partial_{t}^2\uvel-\Delta\uvel - \lambda\partial_t\Delta\uvel - \partial_t\bp{\gamma\np{\partial_t\uvel}^2+\snorm{\grad\uvel}^2}&= f && \tin\R\times\Omega, \\
\pdn\uvel &= g && \ton\R\times\partial\Omega.
\end{pdeq}\tag{KN}
\end{align}
Here, $\lambda,\gamma>0$ are constants. We shall consider both data and solutions that are time-periodic with the same period $\per>0$, that is, functions
$\uvel$, $f$ and $g$ satisfying
\begin{align}\label{tp_cond}
\forall t\in\R:\quad h(\per+t,\cdot)=h(t,\cdot).\tag{TP}
\end{align}
More precisely, we will show for data $f$ and $g$ satisfying \eqref{tp_cond}, and whose norm in appropriate Sobolev spaces are sufficiently small,
the existence of a solution $\uvel$ to \eqref{KuznetsovD} and \eqref{KuznetsovN} also satisfying \eqref{tp_cond}.  

Our approach is based on $\LR{p}$ estimates of solutions to the corresponding linearizations 
\begin{align}\label{damp_waveD}
\begin{pdeq}
\partial_{t}^2\uvel-\Delta\uvel - \lambda\partial_t\Delta\uvel &= f && \tin\R\times\Omega, \\
\uvel &= g && \ton\R\times\partial\Omega
\end{pdeq}\tag{WD}
\end{align}
and
\begin{align}\label{damp_waveN}
\begin{pdeq}
\partial_{t}^2\uvel-\Delta\uvel - \lambda\partial_t\Delta\uvel &= f && \tin\R\times\Omega, \\
\pdn\uvel &= g && \ton\R\times\partial\Omega,
\end{pdeq}\tag{WN}
\end{align}
of \eqref{KuznetsovD} and \eqref{KuznetsovN}, respectively, and an application of the contraction mapping principle. The novelty of our approach is rooted in the method we employ to establish the $\LR{p}$ estimates of \eqref{damp_waveD} and \eqref{damp_waveN}.   
Instead of relying on a Poincar\'{e} map, which is the standard procedure in the investigation of time-periodic problems, and also the approach
used in \cite{Kokocki2015}, we obtain the estimates directly via a representation formula for the solution. 
We hereby circumvent completely the theory for the corresponding initial-value problem that is needed to construct 
a Poincar\'{e} map. Not only do we develop a much more direct approach, the representation formula we establish also seems interesting in the context of resonance, or rather the avoidance hereof, since it exposes the way different modes of the solution are damped in relation to the modes of the forcing term. We shall briefly outline the method in the whole-space case $\Omega = \R^3$. The main idea is to reformulate the time-periodic problem as a partial differential equation on the locally compact abelian group $\grp:=\R/\per\Z\times\R^3$. Since the data $f$ and the solution $\uvel$ are both $\per$-time-periodic functions, they can be interpreted as functions on $\grp$. A differentiable structure on $\grp$ is canonically inherited from $\R\times\R^3$ via the quotient mapping $\pi:\R\times\R^3\to\R/\per\Z\times\R^3$ in such a way that the damped wave equation can be reformulated 
as a partial differential equation
\begin{align}\label{dw_group}
\partial_{t}^2\uvel-\Delta\uvel - \lambda\partial_t\Delta\uvel &= f \quad \tin\grp
\end{align}
in a setting of functions $\uvel:\grp\to\R$ and $f:\grp\to\R$. In this setting, it is possible to use the Fourier transform $\FT_\grp$ in combination with the space of tempered distributions $\TDR(\grp)$, the dual of the Schwartz-Bruhat space $\SR(\grp)$, and derive from \eqref{dw_group} the representation formula
\begin{align}\label{Lsg}
\uvel = \iFT_\grp\left[\frac{1}{|\xi|^2 - k^2 + i\lambda k|\xi|^2}\FT_\grp\nb{f}\right]
\end{align}
when $f\in\SR(\grp)$. Here $\left(k, \xi \right)$ denote points in the dual group $\dualgrp:=\frac{2\pi}{\per}\Z\times\R^3$. 
The term $i\lambda k|\xi|^2$ in the denominator of the Fourier multiplier in \eqref{Lsg} stems from the damping. 
For modes $k\neq 0$, the multiplier is bounded due to the damping term,  whereas the mode $k=0$ of the multiplier is not ``damped'' at all. To obtain the desired estimates of $\uvel$, we shall therefore split the ``damped'' and ``non-damped'' modes, in this case 
\begin{align}\label{split}
\uvel = 
\iFT_\grp\left[\frac{1}{|\xi|^2}\FT_\grp\nb{f}\right]+
\iFT_\grp\left[\frac{(1 - \delta_\Z(k))}{|\xi|^2 - k^2 + i\lambda k|\xi|^2}\FT_\grp\nb{f}\right] =: \us+\up,
\end{align}
where $\delta_\Z(k):=1$ if $k=0$ and $\delta_\Z(k):=0$ otherwise. 
The main advantage of the decomposition is that the bounded multiplier in the representation of $\up$ leads to a better $\LR{p}$ estimate 
than can be obtained for the full solution $\uvel$.
We shall establish the estimate by invoking a transference principle for group multipliers, which allows us to transfer the multiplier into a 
Euclidean setting. This principle was originally established by \textsc{de Leeuw} \cite{Leeuw1965} and later generalized by \textsc{Edwards} and \textsc{Gaudry} \cite{EdwardsGaudry}. The estimate of $\us$ can be obtained by standard methods. 
Clearly, a more complex damping than the Kelvin-Voigt damping term would lead to a more complex splitting in \eqref{split}, but the general idea
should still be applicable. We postpone the investigation of more general damping mechanisms to future works. 

\section{Preliminaries}\label{pre}

\subsection{Notation}

In the following, $\Omega\subset\R^3$ will always denote a domain, namely, an open connected set.
Points in $\R\times\Omega$ are generally denoted by $(t,x)$, with $t$ being referred to as time and $x$ as the spatial variable. 
A time-period $\per>0$ remains fixed.

For functions $f$ defined on time-space domains, we let
\begin{equation}\label{DefOfProj}
\PR f (t,x) := \frac{1}{\per}\int_0^\per f(s,x)\,\ds,\quad \oPR f(t,x):= f(t,x)-\proj f(t,x)
\end{equation}
whenever the integral is well defined. Since $\PR f$ is independent on time $t$, we shall implicitly treat $\proj f$ as a function 
of the spatial variable $x$ only.

Classical Lebesgue and Sobolev spaces with respect to spatial domains are denoted by $\LR{p}(\Omega)$ and $\WSR{k}{p}(\Omega)$, respectively. 
We further introduce the homogeneous Sobolev space
\begin{equation*}
\WSRD{2}{p}\left(\Omega\right) := \setc{\uvel\in\LRloc{1}(\Omega)}{\partial_{x}^\alpha\uvel\in\LR{p}(\Omega), \left|\alpha\right| = 2},\quad \snorm{\uvel}_{2,p} := \Bp{\sum_{\snorm{\alpha} = 2} \norm{\partial_x^\alpha\uvel}_p^p}^{\frac{1}{p}}.
\end{equation*}

\subsection{Group setting and Fourier transform}\label{GroupSettingSection}

We introduce the group $\grp:=\torus\times\R^3$, where $\torus$ denotes the torus group $\R/\per\Z$. 
The quotient mapping 
\begin{align}\label{topology}
\quotientmap :\R\times\R^3\to\grp, \ \pi\left(t, x\right) := \left(\left[t\right], x\right)
\end{align}
induces a topology and a differentiable structure on $\grp$.
Equipped with the quotient topology, $\grp$ becomes a locally compact abelian group. Via the restriction 
$\Pi := \pi\big|_{\left[0,\per\right)\times\R^3}$, we can identify $\grp$ with the domain $\left[0,\per\right)\times\R^3$, and the Haar measure $\dg$ on $\grp$ as the product of the Lebesgue measure on $\R^3$ and the Lebesgue measure on $\left[0,\per\right)$. From the uniqueness of the Haar measure up to a constant, it is possible to choose $\dg$ in such a way that 
\begin{align*}
\int_\grp \uvel\left(g\right)\,\dg = \frac{1}{\per}\int_0^\per\int_{\R^3} \uvel\circ\Pi\left(x, t\right)\,\dx\dt.
\end{align*}
For the sake of convenience, we will omit the $\Pi$ in integrals of $\grp$-defined functions with respect to $\dx\dt$. 
Furthermore, we define by 
\begin{align}\label{DefOfSmoothFunctionsOnGrp}
\CRi(\grp) := \setc{\uvel:\grp\ra\R}{\exists U\in\CRi\bp{\R\times\R^3}:\ U=\uvel\circ\pi},
\end{align}
the space of smooth functions on $\grp$. Derivatives of a function $\uvel\in\CRi(\grp)$ are defined by $$\partial_t^\beta\partial_x^\alpha\uvel := \left[\partial_t^\beta\partial_x^\alpha\left(\uvel\circ\pi\right)\right]\circ\Pi^{-1}\quad\forall\left(\alpha, \beta\right)\in\N_0^3\times\N_0.$$ 
By $\CRci(\grp) := \setc{\uvel\in\CRi(\grp)}{\supp\uvel \text{ is compact}}$ we denote the space of compactly supported smooth functions on $\grp$.
The Schwartz-Bruhat space on $\grp$ is defined by 
\begin{align*}
\SR(\grp) := \setc{\uvel\in\CRi(\grp)}{\forall (\alpha, \beta, \gamma)\in\N_0^3\times\N_0\times\N_0^3: \ \rho_{\alpha, \beta, \gamma}(\uvel)<\infty},
\end{align*}
where
\begin{align*}
\rho_{\alpha, \beta, \gamma}(\uvel) := \sup_{(t,x)\in\grp}{\left|x^\gamma\partial_t^\beta\partial_x^\alpha\uvel(t,x)\right|}.
\end{align*}
Equipped with the semi-norm topology of the family ${\{\rho_{\alpha, \beta, \gamma} | \left(\alpha, \beta, \gamma\right)\in\N_0^3\times\N_0\times\N_0^3 \}}$, $\SR(\grp)$ becomes a topological vector space. The corresponding topological dual space $\TDR(\grp)$ equipped with the $\text{weak}*$ topology is referred to as the space of tempered distributions on $\grp$. Distributional derivatives for a tempered distribution $\uvel$ are defined by duality as in the classical case.

Let $\dualgrp$ denote the dual group of $\grp$. Each $\left(k, \xi\right)\in\frac{2\pi}{\per}\Z\times\R^3$ can be associated with a character $\chi: \grp\to\C, \ \chi\left(t, x\right):= e^{ix\cdot\xi + ikt}$ on $\grp$. Thus we can identify $\dualgrp = \frac{2\pi}{\per}\Z\times\R^3$,
and the compact-open topology on $\dualgrp$ as the product of the Euclidean topology on $\R^3$ and the discrete topology on $\frac{2\pi}{\per}\Z$. The Haar measure on $\dualgrp$ is then the product of the counting measure on $\frac{2\pi}{\per}\Z$ and the Lebesgue measure on $\R^3$. 
The space of smooth functions on $\dualgrp$ is defined as
\begin{align*}
\CRi(\dualgrp) := \setc{\uvel\in\CR{}(\dualgrp)}{\forall k\in\frac{2\pi}{\per}\Z: \uvel(k, \cdot)\in\CRi(\R^3)},
\end{align*}
and the Schwartz-Bruhat space as
\begin{align*}
\SR(\dualgrp) := \setc{\uvel\in\CRi(\dualgrp)}{\forall (\alpha, \beta, \gamma)\in\N_0^3\times\N_0^3\times\N_0: \ \dualrho_{\alpha, \beta, \gamma}(\uvel)<\infty},
\end{align*}
where
\begin{align*}
\dualrho_{\alpha, \beta, \gamma}(\uvel) := \sup_{(k,\xi)\in\dualgrp}{\left|\xi^\alpha\partial_\xi^\beta k^\gamma\uvel(k,\xi)\right|}
\end{align*}
are the generic semi-norms.

By $\FT_\grp$ we denote the Fourier transform associated to the locally compact abelian group $\grp$ defined by 
\begin{align*}
\FT_\grp:\SR(\grp)\ra\SR(\dualgrp),\quad \FT_\grp\nb{\uvel}(k,\xi):= \ft{\uvel}(k,\xi) := \frac{1}{\per}\int_0^\per\int_{\R^3} \uvel(t,x)\,\e^{-ix\cdot\xi-ik t}\,\dx\dt.
\end{align*}
We recall that $\FT_\grp:\SR(\grp)\ra\SR(\dualgrp)$ is a homeomorphism and the inverse is given by
\begin{align*}
\iFT_\grp:\SR(\dualgrp)\ra\SR(\grp),\quad \iFT_\grp\nb{\wvel}(t,x):= \ift{\wvel}(t,x) := \sum_{k\in\frac{2\pi}{\per}\Z}\,\int_{\R^3} \wvel(k,\xi)\,\e^{ix\cdot\xi+ik t}\,\dxi.
\end{align*}
By duality, $\FT_\grp$ extends to a homeomorphism $\TDR(\grp)\ra\TDR(\dualgrp)$.

\subsection{Multiplier theory}
Next, we introduce two helpful tools from harmonic analysis, which will enable us to estimate the solution to \eqref{damp_waveD} and \eqref{damp_waveN}.
Due to the lack of sufficient multiplier theory in the general group setting, we make use of the following lemma to transfer the investigation into an Euclidean stetting.
The idea goes back to \textsc{De Leeuw} \cite{Leeuw1965}; the lemma below is due to \textsc{Edwards} and \textsc{Gaudry} \cite{EdwardsGaudry}.

\begin{lem}\label{transference}
Let $\grp$ and $\grpH$ be locally compact abelian groups. Moreover, let $\Phi:\dualgrp\ra\dualgrpH$ be a continuous homomorphism and $p\in [1,\infty]$. Assume that $m\in\LR{\infty}(\dualgrpH;\C)$ is a continuous $\LR{p}$-multiplier, i.e., there is a constant $C$ such that
\begin{align*}
\forall f\in\LR{2}\left(\grpH\right)\cap\LR{p}\left(\grpH\right): \norm{\iFT_\grpH\nb{m\cdot \widehat{f}}}_p\leq C\norm{f}_p.
\end{align*}
Then $m\circ\Phi\in\LR{\infty}(\dualgrp; \C)$ is also an $\LR{p}$-multiplier with
\begin{align*}
\forall f\in\LR{2}\left(\grp\right)\cap\LR{p}\left(\grp\right): \norm{\iFT_\grp\nb{m\circ\Phi\cdot \widehat{f}}}_p\leq C\norm{f}_p.
\end{align*}
\end{lem}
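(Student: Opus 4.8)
The strategy is the classical De Leeuw–Edwards–Gaudry transference argument, which reduces the multiplier statement on $\grp$ to the known multiplier statement on $\grpH$ by a density-and-pullback argument. First I would observe that it suffices to prove the estimate for $f$ in a dense subclass, namely $f\in\SR(\grp)$, and then extend by density using the fact that $\SR(\grp)$ is dense in $\LR{2}(\grp)\cap\LR{p}(\grp)$ (in the appropriate intersection norm) together with the uniform bound. Second, for such $f$ the function $\iFT_\grp[m\circ\Phi\cdot\ft f]$ is well-defined because $m\circ\Phi$ is continuous and bounded (here continuity of $\Phi$ and $m$ is used), and $\ft f\in\SR(\dualgrp)$ decays rapidly, so the product lies in a space on which $\iFT_\grp$ acts. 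The core of the argument is then to relate the group convolution operator $T_{m\circ\Phi}$ on $\grp$ to the operator $T_m$ on $\grpH$ via the homomorphism $\Phi$.

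The key step is the ``tube'' or ``restriction'' construction: one does \emph{not} pull $\Phi$ back directly, but instead approximates. Concretely, for a test function $f$ on $\grp$ and a suitable approximate identity $\varphi_\epsilon$ on $\dualgrpH$, one considers the function whose Fourier transform is $(m*\varphi_\epsilon)\circ\Phi$; since $m*\varphi_\epsilon$ is now \emph{smooth and compactly supported} on $\dualgrpH$, the composition with the continuous homomorphism $\Phi$ is a legitimate, well-behaved multiplier on $\grp$, and one can write the corresponding operator on $\grp$ in terms of the operator with symbol $m*\varphi_\epsilon$ on $\grpH$ acting on translates/dilates of $f$, picking up only the constant $C$ (the multiplier norm of $m$, which dominates that of $m*\varphi_\epsilon$ by Young's inequality and $\|\varphi_\epsilon\|_1=1$). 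Letting $\epsilon\to0$, one uses $m*\varphi_\epsilon\to m$ pointwise a.e. (by continuity of $m$ this even holds everywhere along the net, at points of $\dualgrpH$), hence $(m*\varphi_\epsilon)\circ\Phi\to m\circ\Phi$ pointwise, and by dominated convergence at the level of $\ft f$-weighted integrals one passes the limit inside $\iFT_\grp$, obtaining $\|\iFT_\grp[m\circ\Phi\cdot\ft f]\|_p\le C\|f\|_p$ first for $f\in\SR(\grp)$ and then, by density, for all $f\in\LR{2}(\grp)\cap\LR{p}(\grp)$. The cases $p=\infty$ (and $p=1$) are handled by the usual duality/$L^\infty$-adjustments, or simply cited from Edwards–Gaudry since the statement is exactly theirs.

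The main obstacle — and the reason the naive ``just compose with $\Phi$'' argument fails — is that $m\circ\Phi$ need not be a \emph{continuous} multiplier in any a priori sense that lets one manipulate $T_{m\circ\Phi}$ directly: $\Phi$ may collapse or stretch directions, so the pullback of an $\LR p$-multiplier is not obviously an $\LR p$-multiplier without the regularization. Getting the approximate-identity argument to interact correctly with the structure of $\grp=\torus\times\R^3$ and $\dualgrp=\frac{2\pi}{\per}\Z\times\R^3$ (in particular, the discrete factor) requires a little care in choosing $\varphi_\epsilon$ as a product of an approximate identity on $\R^3$ and the counting-measure unit mass on the discrete factor, but this is routine. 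Since the statement as quoted is verbatim the theorem of Edwards and Gaudry, the cleanest route in the paper is to invoke \cite{EdwardsGaudry} directly; the sketch above indicates how the proof goes should one wish to include it.
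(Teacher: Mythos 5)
The paper proves this lemma by citing Edwards and Gaudry, Theorem B.2.1, and nothing more; you correctly identify that citation as the clean route, so on that level you and the paper agree.

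Your supplementary sketch, however, glosses over the one step that makes the transference work. The bridge between the operator on $\grp$ and the operator on $\grpH$ is furnished by Pontryagin duality: the continuous homomorphism $\Phi:\dualgrp\to\dualgrpH$ has an adjoint continuous homomorphism $\Phi^*:\grpH\to\grp$ (because $\grp$ and $\grpH$ are canonically their own double duals), and the heart of the argument is to pull a test function $f$ on $\grp$ back along $\Phi^*$ to a function $f\circ\Phi^*$ on $\grpH$, where the hypothesis on $m$ can finally be applied. Your phrase ``acting on translates/dilates of $f$'' is a placeholder for exactly this but does not supply it; without $\Phi^*$ there is no map at all carrying functions on $\grp$ to functions on $\grpH$. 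A smaller slip: convolving $m$ with a compactly supported approximate identity $\varphi_\epsilon$ smooths $m$ but does not make $m*\varphi_\epsilon$ compactly supported (the multiplier in this paper's own application is not compactly supported, and convolution with a compactly supported kernel does not shrink the support of a globally nonvanishing function). The regularization in the actual Edwards--Gaudry argument sits on the physical side, multiplying $f\circ\Phi^*$ by a cut-off whose Fourier transform concentrates at the identity of $\dualgrpH$; your Young's-inequality and modulation-invariance estimate for $m*\varphi_\epsilon$ is the Fourier-dual expression of that cut-off and is correct as far as it goes. None of this affects the agreement with the paper, since the paper does not reproduce the proof, but the sketch as written does not reconstruct it.
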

\begin{proof}
See \cite[Theorem B.2.1]{EdwardsGaudry}.
\end{proof}
\begin{rem}
Applying Lemma \ref{transference} with $\grp := \torus\times\R^3$, $\grpH := \R\times\R^3$ and 
$\Phi:\frac{2\pi}{\per}\Z\times\R^3\mapsto\R\times\R^3,\ \Phi(k, \xi):=(k, \xi)$,
we are able to transform and investigate an $\LR{p}$-multiplier on $\grp$ into an $\R^4$ setting.
\end{rem}
We shall make use of the following multiplier theorem of Marcinkiewicz type:
\begin{lem}\label{Marcinkiewicz}
Let $m:\R^n\ra\C$ be a bounded function with $m\in\CR{n}(\R^n)$. Assume there is a constant $A$ such that
\begin{align}\label{Mar}
\sup_{\varepsilon\in\{0, 1\}^n}\sup_{\xi\in\R^n}\left|\xi_1^{\varepsilon_1}\cdots\xi_n^{\varepsilon_n}\partial_{\xi_1}^{\varepsilon_1}\cdots\partial_{\xi_n}^{\varepsilon_n}m(\xi)\right|\leq A.
\end{align}
Then for any $p\in(1,\infty)$ there is a constant $C$ such that
\begin{align*}
\forall f\in\LR{2}(\R^n)\cap\LR{p}(\R^n): \norm{\mathscr{F}^{-1}\nb{m\cdot\widehat{f}}}_p\leq CA\norm{f}_p,
\end{align*}
with $C=C(p)$.
\end{lem}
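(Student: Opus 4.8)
\emph{Plan.} Since $m$ is bounded, Plancherel's theorem makes $T_mf:=\iFT\nb{m\,\widehat f}$ well defined on $\LR2(\R^n)$, and the assertion is the a priori bound $\norm{T_mf}_p\le CA\norm f_p$ on $\LR2\cap\LR p$ for $1<p<\infty$. The quickest route is to observe that \eqref{Mar} is a pointwise strengthening of the classical Marcinkiewicz hypothesis: for any $\{i_1,\dots,i_\ell\}\subseteq\{1,\dots,n\}$ and any dyadic rectangle $\rho=\prod_{l}[2^{k_l},2^{k_l+1}]$ in the variables $\xi_{i_1},\dots,\xi_{i_\ell}$ (and likewise over negative dyadic intervals),
\[
\int_{\rho}\snorm{\partial_{\xi_{i_1}}\!\cdots\partial_{\xi_{i_\ell}}m}\,\dxi_{i_1}\!\cdots\dxi_{i_\ell}\ \le\ A\prod_{l=1}^{\ell}\int_{2^{k_l}}^{2^{k_l+1}}\frac{\dxi_{i_l}}{\xi_{i_l}}\ =\ A(\log 2)^{\ell},
\]
so that the conclusion is exactly the Marcinkiewicz multiplier theorem in its standard form (see, e.g., Stein's monograph on singular integrals, Ch.~IV), all of whose constants are controlled by the single bound $A$; this yields $\norm{T_mf}_p\le C(p)A\norm f_p$.

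If one prefers a self-contained argument, the scheme is the usual Littlewood--Paley proof. First fix a smooth dyadic partition of unity in each coordinate and take tensor products to obtain, for every dyadic rectangle $\rho=\prod_{i=1}^n\pm[2^{k_i},2^{k_i+1}]$, a bump $\varphi_\rho$ with $\supp\varphi_\rho\subset 2\rho$ and $\sum_\rho\varphi_\rho\equiv1$; write $\Delta_\rho$ for the Fourier multiplier operator with symbol $\varphi_\rho$, and choose a fatter companion family $\tilde\Delta_\rho$ with $\tilde\varphi_\rho\equiv1$ on $2\rho$. Applying the one-dimensional Littlewood--Paley inequality successively in each of the $n$ variables — passing through the vector-valued formulations via a randomization (Khintchine) argument — gives, for $1<p<\infty$, the two-sided estimate $\norm{\big(\sum_\rho\snorm{\Delta_\rho g}^2\big)^{1/2}}_p\simeq\norm g_p$, and likewise with $\tilde\Delta_\rho$. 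Since Fourier multipliers commute, $\Delta_\rho T_mf=T_{m\varphi_\rho}\big(\tilde\Delta_\rho f\big)$, so it suffices to prove the square-function bound
\[
\normL{\Big(\textstyle\sum_\rho\snorm{T_{m\varphi_\rho}\tilde\Delta_\rho f}^2\Big)^{1/2}}_p\ \le\ C\,A\,\normL{\Big(\textstyle\sum_\rho\snorm{\tilde\Delta_\rho f}^2\Big)^{1/2}}_p .
\]

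For the per-rectangle step, expand $m\varphi_\rho$ by the multi-parameter fundamental theorem of calculus from the corner of $\supp\varphi_\rho$ nearest the origin: this writes $m\varphi_\rho$ as a sum over $S\subseteq\{1,\dots,n\}$ of integrals $\int(\prod_{i\in S}\partial_{\xi_i})(m\varphi_\rho)(\eta)\cdot\mathbf 1_{\{\xi:\,\xi_i\ge\eta_i\,(i\in S)\}}\,d\eta_S$, with the coordinates outside $S$ frozen at the corner. On $\supp\varphi_\rho$ one has $\snorm{\xi_i}\simeq 2^{k_i}$, so by Leibniz and the instance of \eqref{Mar} indexed by $S$ each $\prod_{i\in S}\partial_{\xi_i}(m\varphi_\rho)$ is $\lesssim A\prod_{i\in S}2^{-k_i}$, and hence the $\eta_S$-integral of its modulus over the projection of $\supp\varphi_\rho$ is $\lesssim A$. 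Thus, by Minkowski's inequality, $T_{m\varphi_\rho}\tilde\Delta_\rho f$ is (up to the harmless frozen-variable projections) a superposition, against a finite measure $\mu_\rho$ of total mass $\lesssim A$, of the operators $P^S_{\eta_S}\tilde\Delta_\rho f$, where $P^S_{\eta_S}$ is the product over $i\in S$ of the one-sided Fourier projections $\mathbf 1_{[\eta_i,\infty)}(D_i)$ in the $i$-th variable. A Cauchy--Schwarz in $\eta_S$ against $\mu_\rho$ reduces the square-function bound to a vector-valued inequality of the form $\norm{\big(\sum_\rho\int\snorm{P^S_{\eta_S}\tilde\Delta_\rho f}^2\,d\mu_\rho(\eta_S)\big)^{1/2}}_p\lesssim A\,\norm{\big(\sum_\rho\snorm{\tilde\Delta_\rho f}^2\big)^{1/2}}_p$, which is the $\LR p(\ell^2(L^2))$ square-function estimate for the one-sided cutoffs; it follows by iterating over $i\in S$ the one-dimensional square-function bound for the family $\{\mathbf 1_{[\eta_i,\infty)}(D_i)\}_{\eta_i}$ (a consequence of the one-dimensional Littlewood--Paley inequality and the Fefferman--Stein vector-valued maximal inequality), using that these projections commute with $\tilde\Delta_\rho$ and with one another. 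Summing the $\le 2^n$ terms indexed by $S$ and invoking the Littlewood--Paley equivalences yields $\norm{T_mf}_p\le C(p)A\norm f_p$.

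The main obstacle, in either approach, is this one piece of harmonic analysis: the vector-valued $\LR p(\ell^2)$ estimate for the one-parameter family of one-sided Fourier cutoffs, equivalently the product Littlewood--Paley theory over dyadic rectangles together with the uniform control of sharp frequency cutoffs within a single dyadic scale. One cannot shortcut this by a pointwise domination of $T_{m\varphi_\rho}$ by the strong maximal function: truncating the localized multiplier at an interior point of its support destroys smoothness, so the relevant cutoffs are of partial-sum rather than maximal-function type and must be summed through the $\ell^2$ orthogonality. All remaining ingredients — the one-dimensional Littlewood--Paley inequality, Khintchine's inequality, Fefferman--Stein, and the Leibniz bookkeeping — are standard, which is why in practice one simply appeals to the classical Marcinkiewicz multiplier theorem as in the first paragraph.
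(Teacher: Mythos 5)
Your argument is correct, but it is doing considerably more work than the paper, which disposes of this lemma in one line by citing Grafakos, Corollary~6.2.5 --- that corollary \emph{is} precisely the statement of Lemma~\ref{Marcinkiewicz}, so the paper's proof is a pure appeal to the literature. Your first paragraph is exactly the reduction that reference itself uses internally: the pointwise bound \eqref{Mar} implies the dyadic bounded-variation hypotheses of the classical (Stein/Grafakos Theorem~6.2.4) Marcinkiewicz theorem with the uniform constant $A(\log 2)^{\ell}$, and the conclusion follows. That paragraph alone would be an acceptable (and somewhat more informative) substitute for the paper's one-line citation. The self-contained Littlewood--Paley sketch in your remaining paragraphs is also structurally correct: it is the standard argument, reducing to the $\LR{p}(\ell^2)$ square-function bound for the dyadic rectangle decomposition and, after expanding $m\varphi_\rho$ by the multi-parameter fundamental theorem of calculus, to the vector-valued $\LR{p}(\ell^2(L^2(\mu_\rho)))$ estimate for one-sided sharp frequency cutoffs $\mathbf 1_{[\eta_i,\infty)}(D_i)$ localized to a single dyadic scale. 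You correctly identify that this last vector-valued bound (via the vector-valued Hilbert transform, not a pointwise maximal-function domination) is the only genuinely non-elementary ingredient. The trade-off is clear: the paper keeps this as a black box citation, whereas your version exposes where the hypothesis \eqref{Mar} enters (as a mass bound $\lesssim A$ on the measures $\mu_\rho$) and why the constant scales linearly in $A$; if the paper's intent is merely to record the tool, the citation suffices, but your first paragraph costs little and makes the dependence on $A$ transparent.
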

\begin{proof}
See \cite[Corollary 6.2.5]{Grafakos}.
\end{proof}

\subsection{Function spaces}\label{FunktionSpacesSection}

Let $E(\Omega)$ be a Banach space. By the same construction as in \eqref{DefOfSmoothFunctionsOnGrp}, we introduce the space $\CRi\bp{\torus;E(\Omega)}$ 
of smooth vector-valued functions on the torus. For $p\in(1, \infty)$ and $k\in\N_0$ we further introduce the norms 
\begin{align}\label{DefOfNormsOnTorus}
\begin{aligned}
&\norm{f}_{\LR{p}\left(\torus; E(\Omega)\right)}:=\Bp{\frac{1}{\per}\int_0^\per \norm{f(t,\cdot)}^p_{E(\Omega)} \,\dt}^{\frac{1}{p}}, \\
&\norm{f}_{\WSR{k}{p}\left(\torus; E(\Omega)\right)}:=\Bp{\sum_{\alpha = 0}^k \norm{\partial_t^\alpha f}_{\LR{p}\left(\torus; E(\Omega)\right)}^p}^{\frac{1}{p}},
\end{aligned}
\end{align}
and let
\begin{align*}
&\LR{p}\left(\torus; E(\Omega)\right):= \closure{\CRi\left(\torus; E\left(\Omega\right)\right)}{\norm{\cdot}_{\LR{p}\left(\torus; E(\Omega)\right)}},\\
&\WSR{k}{p}\left(\torus; E(\Omega)\right) := \closure{\CRi\left(\torus; E\left({\Omega}{}\right)\right)}{\norm{\cdot}_{\WSR{k}{p}\left(\torus; E(\Omega)\right)}}.
\end{align*}
Clearly $\LR{p}\left(\torus\times\Omega\right) = \LR{p}\left(\torus; \LR{p}\left(\Omega\right)\right)$. We sometimes 
write $\norm{\cdot}_p$ instead of $\norm{\cdot}_{\LR{p}\left(\torus; \LR{p}\left(\Omega\right)\right)}$ when no confusion can arise.

Recalling \eqref{DefOfProj}, we observe that $\proj$ and $\projcompl$ are complementary projections on the space $\CRi\left(\torus; E(\Omega)\right)$.
We shall employ these projections 
to decompose the Lebesgue and Sobolev spaces introduced above. Since $\PR f$ is time independent, 
we shall refer to $\PR f$ as the \emph{steady-state} part of $f$, and $\oPR f$ as the \emph{purely periodic} part.
By continuity, $\PR$ and $\oPR$ extend to bounded operators on $\LR{p}\left(\torus; E(\Omega)\right)$ and $\WSR{k}{p}\left(\torus; E(\Omega)\right)$.

We introduce the anisotropic Sobolev space
\begin{align}
&\SolS{p}\left(\torus\times\Omega\right) := \oPR\WSR{2}{p}(\torus; \LR{p}(\Omega)) \cap \oPR\WSR{1}{p}\left(\torus; \WSR{2}{p}(\Omega)\right)\\
&\norm{\uvel}_{\SolS{p}} := \Bp{\norm{\partial_t^2\uvel}_{p}^p + \norm{\uvel}_{\WSR{1}{p}\left(\torus; \WSR{2}{p}\left(\Omega\right)\right)}^p}^{\frac{1}{p}}.
\end{align}
Sobolev-Slobodecki\u{\i} spaces 
\begin{align}\label{DefOfTraceSpaces}
\begin{aligned}
&\TPD(\torus\times\partial\Omega) := \WSR{2-\frac{1}{2p}}{p}\left(\torus; \LR{p}(\partial\Omega)\right)\cap\WSR{1}{p}\left(\torus; \WSR{2-\frac{1}{p}}{p}(\partial\Omega)\right),\\
&\TPN(\torus\times\partial\Omega) := \WSR{\frac{1}{2}\left(3-\frac{1}{p}\right)}{p}\left(\torus; \LR{p}(\partial\Omega)\right)\cap\WSR{1}{p}\left(\torus; \WSR{1-\frac{1}{p}}{p}(\partial\Omega)\right),
\end{aligned}
\end{align}
are defined in the usual way using real interpolation. One may verify that the trace operators 
\begin{align*}
&\TD: \WSR{2}{p}(\torus; \LR{p}(\Omega)) \cap \WSR{1}{p}\left(\torus; \WSR{2}{p}(\Omega)\right) \to \TPD(\torus\times\partial\Omega),\quad
\TD(\uvel):=\uvel_{|{\torus\times\partial\Omega}},\\
&\TN: \WSR{2}{p}(\torus; \LR{p}(\Omega)) \cap \WSR{1}{p}\left(\torus; \WSR{2}{p}(\Omega)\right) \to \TPN(\torus\times\partial\Omega),\quad 
\TN(\uvel) :=\pdn\uvel_{|{\torus\times\partial\Omega}},
\end{align*}
are continuous and surjective; see for example \cite{DHP}.

\section{Linear Problem}\label{lin}
We shall investigate the linearized problems \eqref{damp_waveD} and \eqref{damp_waveN} and establish maximal $\LR{p}$ regularity in a setting of $\per$-time-periodic functions. 
For a Banach space $E(\Omega)$ we define by 
\begin{align*}
\CRiper\left(\R; E\left(\Omega\right)\right):= \setc{f\in\CRi\left(\R; E\left(\Omega\right)\right)}{f(t+\per, x) = f(t, x) }
\end{align*}
the space of smooth vector-valued $\per$-time-periodic functions. Lebesgue and Sobolev spaces of time-periodic vector-valued functions are defined by
\begin{align*}
&\LRper{p}\left(\R; E(\Omega)\right):= \closure{\CRiper\left(\R; E(\Omega)\right)}{\norm{\cdot}_{\LR{p}\left(\torus; E(\Omega)\right)}},\\
&\WSRper{k}{p}\left(\R; E(\Omega)\right) := \closure{\CRiper\left(\R; E({\Omega}{})\right)}{\norm{\cdot}_{\WSR{k}{p}\left(\torus. E(\Omega)\right)}},
\end{align*}
where the norms $\norm{\cdot}_{\LR{p}\left(\torus; E(\Omega)\right)}$ and $\norm{\cdot}_{\WSR{k}{p}\left(\torus. E(\Omega)\right)}$ are defined as in \eqref{DefOfNormsOnTorus}.
Similarly, $\TPDper(\R\times\partial\Omega)$ and $\TPNper(\R\times\partial\Omega)$ are defined in accordance with 
\eqref{DefOfTraceSpaces}.

\begin{thm}[Dirichlet problem]\label{max_regD}
Assume that either $\Omega = \R^3$, $\Omega = \R^3_+$ or $\Omega\subset\R^3$ is a bounded domain with a $C^{1,1}$-smooth boundary. Let $p\in (1, \infty)$. Then for any $f\in\LRper{p}(\R; \LR{p}\left(\Omega\right))$ and $g\in \TPDper(\R\times\partial\Omega)$ there is a solution $\uvel$ to \eqref{damp_waveD} with
\begin{align}
\uvel(t,x) = \us(x) + \up(t,x) \in \WSRD{2}{p}\left(\Omega\right)\oplus\oPR\WSRper{2}{p}(\R; \LR{p}(\Omega)) \cap \oPR\WSRper{1}{p}\left(\R; \WSR{2}{p}(\Omega)\right)
\end{align}
satisfying
\begin{align}
&\snorm{\us}_{2,p}\leq c_1\left(\norm{\PR f}_p + \norm{\PR g}_{\TPD}\right)\label{est_1},\\
&\norm{\up}_{\SolS{p}}\leq c_2\left(\norm{\oPR f}_p + \norm{\oPR g}_{\TPD}\right)\label{est_2},
\end{align}
where $c_1 = c_1\left(p, \Omega\right)>0$ and $c_2 = c_2\left(p, \Omega, \per\right)>0$. If $v = v_s + v_p$ is another solution with $v_s\in\WSRD{2}{q_1}\left(\Omega\right)$ and $v_p\in \oPR\WSRper{2}{q_2}(\R; \LR{q_2}(\Omega)) \cap \oPR\WSRper{1}{q_2}\left(\R; \WSR{2}{q_2}(\Omega)\right), q_1, q_2\in (1, \infty)$, then $\us - v_s$ is a polynomial of order 1 and $\up = v_p$.
\end{thm}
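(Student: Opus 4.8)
The plan is to decompose the problem via the projections $\PR$ and $\oPR$ introduced in Section~\ref{pre}, solving the steady-state part and the purely periodic part separately, and then patch the two solutions together. Applying $\PR$ to \eqref{damp_waveD} kills the two time-derivative terms and leaves the Poisson problem $-\Delta\us = \PR f$ in $\Omega$ with $\us = \PR g$ on $\partial\Omega$; here one invokes classical $\LR{p}$ theory for the Laplacian on the whole-space, half-space and bounded $C^{1,1}$ domains (e.g.\ Galdi's monograph) to obtain $\us\in\WSRD{2}{p}(\Omega)$ together with the estimate \eqref{est_1}. Applying $\oPR$ yields the purely periodic problem $\partial_t^2\up - \Delta\up - \lambda\partial_t\Delta\up = \oPR f$ with $\up = \oPR g$ on $\partial\Omega$, to which the core of the paper applies.

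For the purely periodic part I would first reduce to homogeneous boundary data: using surjectivity of the trace operator $\TD$ (stated at the end of Section~\ref{FunktionSpacesSection}), lift $\oPR g$ to some $G$ in the maximal-regularity class and replace $\up$ by $\up - G$, absorbing the extra terms into a modified right-hand side in $\oPR\LRper{p}$. Then I would treat the resulting problem on the group $\grp = \torus\times\R^3$ (or its half-space/bounded analogue): since on purely periodic functions the Fourier symbol $|\xi|^2 - k^2 + i\lambda k|\xi|^2$ is nonzero for every $(k,\xi)$ with $k\neq 0$, the representation formula \eqref{Lsg} defines $\up$, and the point is to show the relevant Fourier multipliers $m(k,\xi)$ — the symbol itself and the ones obtained by multiplying by $k^2$, $k\xi_j\xi_\ell$, etc., corresponding to the norm components $\norm{\partial_t^2\up}_p$ and $\norm{\up}_{\WSRper{1}{p}(\torus;\WSR{2}{p}(\Omega))}$ — are $\LR{p}$-multipliers on $\grp$. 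By the transference principle Lemma~\ref{transference} with $\Phi(k,\xi) = (k,\xi)\colon \frac{2\pi}{\per}\Z\times\R^3 \to \R\times\R^3$, this reduces to verifying the Marcinkiewicz--Mihlin condition \eqref{Mar} for the $\R^4$ extensions of these symbols; the key structural fact making this work is that $k$ is bounded away from $0$ on the support (namely $|k|\geq \perf$), which is precisely what renders the denominator nondegenerate and the symbols $\CR{4}$ with the required derivative bounds. Carrying out the derivative estimates then gives \eqref{est_2} with the constant depending on $\per$ through this spectral gap. For the half-space and bounded-domain cases one combines this whole-space multiplier analysis with the localization/perturbation machinery of maximal-regularity theory for boundary-value problems (reflection at the flat boundary, partition of unity, and a Neumann-series/fixed-point argument to handle the curved boundary), citing \cite{DHP} for the abstract $\LR{p}$ maximal regularity framework.

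For uniqueness, suppose $v = v_s + v_p$ is another solution in the stated (possibly different) integrability classes. Then $w := \uvel - v$ solves the homogeneous equation with zero boundary data. Applying $\PR$ shows $w_s := \us - v_s$ is harmonic in $\Omega$ with vanishing trace and second derivatives in some $\LR{q_1}$; hence $\Delta w_s = 0$ forces $w_s$ to be a harmonic polynomial, and the Dirichlet condition together with the regularity of $\partial\Omega$ (or the $\LR{q_1}$ bound on $D^2 w_s$) pins it down to a polynomial of order $1$ — in the bounded case the zero boundary value forces $w_s \equiv 0$, while in the whole- and half-space the degree-$\leq 1$ ambiguity is genuine. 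Applying $\oPR$, the purely periodic part $w_p := \up - v_p$ solves the homogeneous damped wave equation on $\grp$ with zero Dirichlet data; testing with $w_p$ and integrating over one period, the term $\lambda\int_\grp |\partial_t\grad w_p|^2$ appears with a definite sign while $\int_\grp \partial_t^2 w_p\, w_p = -\int_\grp|\partial_t w_p|^2$ and $\int_\grp(-\Delta w_p)w_p = \int_\grp|\grad w_p|^2$ have no net contribution after integrating the purely periodic $\partial_t(\cdots)$ terms, so the energy identity collapses to $\lambda\norm{\partial_t\grad w_p}_2^2 = 0$; hence $\partial_t\grad w_p = 0$, and combined with $\oPR w_p = w_p$ and the zero boundary trace this yields $w_p \equiv 0$. (One must first justify the integration by parts by a density/approximation argument, or alternatively argue directly on the Fourier side: $\widehat{w_p}(k,\xi)\,(|\xi|^2 - k^2 + i\lambda k|\xi|^2) = 0$ with the symbol nonvanishing for $k\neq 0$ gives $\widehat{w_p}\equiv 0$, which is cleaner in the whole-space case.)

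The main obstacle I expect is the verification of the Marcinkiewicz condition \eqref{Mar} for the $\R^4$-extended symbols uniformly in the spectral gap — one must track how the derivative bounds, and hence the $\LR{p}$ constant, depend on $\per$ — together with the transition from the whole-space multiplier estimate to the half-space and bounded-domain cases, where the standard but technical localization-and-perturbation argument for maximal regularity of the boundary-value problem has to be run in the time-periodic ($\torus$-valued) setting.
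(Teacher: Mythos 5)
Your overall plan matches the paper's argument closely: decompose via $\PR$, $\oPR$, solve the Poisson problem for $\us$, and treat the purely periodic part on $\grp=\torus\times\R^3$ via the representation formula, the transference principle of Lemma~\ref{transference}, and the Marcinkiewicz condition, then pass to the half-space by reflection and to bounded domains by flattening the boundary and localizing. The paper implements all of these steps essentially as you describe (Lemmas~\ref{peri_whole}--\ref{BD_reg_D}), including the role of the spectral gap $\snorm{k}\geq\perf$ making the symbol nondegenerate and the $\per$-dependence of the constant; one technical ingredient you gloss over but should be aware of is the compactness argument of Lemma~\ref{closed_range_D} used to drop the lower-order term $\norm{\uvel}_p$ on a bounded domain, which is not quite the same as simply citing an abstract maximal-regularity framework.

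The one genuine defect is in your energy argument for uniqueness of the purely periodic part. First, there is a computational slip: testing the equation with $w_p$ makes the damping term vanish after integration over the period, since $\int_\grp(-\lambda\partial_t\Delta w_p)\,w_p=\tfrac{\lambda}{2}\int_\grp\partial_t\snorm{\grad w_p}^2=0$; the coercive contribution $\lambda\norm{\partial_t\grad w_p}_2^2$ is produced by testing with $\partial_t w_p$ instead. Second, and more seriously, even when done correctly this energy identity lives in $\LR{2}$, while the uniqueness statement must cover arbitrary $q_2\in(1,\infty)$ (and unbounded $\Omega$, where no $\LR{q_2}\hookrightarrow\LR{2}$ inclusion is available), so the integration by parts is not justified for the class of competitors allowed by the theorem. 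You do flag the Fourier-side alternative — that $(\snorm{\xi}^2-k^2+i\lambda k\snorm{\xi}^2)\ft{w_p}=0$ with the symbol nonvanishing for $k\neq 0$ forces $\ft{w_p}=0$ — and this is indeed the route the paper takes in the whole space; for the half-space the paper uses a duality argument (test against the solution of the adjoint problem with time reversed), and for bounded domains it applies the Fourier transform in time only and uses uniqueness of the resulting Helmholtz problem. Replacing your energy argument with one of these $\LR{p}$-robust versions would close the gap.
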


\begin{thm}[Neumann problem]\label{max_regN}
Let $\Omega$ and $p$ be as in Theorem \ref{max_regD}. Furthermore, let $f\in\LRper{p}(\R; \LR{p}\left(\Omega\right))$ and $g\in\TPNper(\R\times\partial\Omega)$. If $\Omega$ is a bounded domain, assume
\begin{align}\label{CompCond}
\int_0^\per\int_\Omega f \,\dx\dt + \int_0^\per\int_{\partial\Omega} g \,\dS\dt = 0.
\end{align}
Then there is a solution $\uvel$ to \eqref{damp_waveN} with
\begin{align}
\uvel(t,x) = \us(x) + \up(t,x) \in \WSRD{2}{p}\left(\Omega\right)\oplus\oPR\WSRper{2}{p}(\R; \LR{p}(\Omega)) \cap \oPR\WSRper{1}{p}\left(\R; \WSR{2}{p}(\Omega)\right)
\end{align}
satisfying 
\begin{align}
&\snorm{\us}_{2,p}\leq c_1\left(\norm{\PR f}_p + \norm{\PR g}_{\TPN}\right)\label{est_3},\\
&\norm{\up}_{\SolS{p}}\leq c_2\left(\norm{\oPR f}_p + \norm{\oPR g}_{\TPN}\right)\label{est_4},
\end{align}
where $c_1 = c_1\left(p, \Omega\right)>0$ and $c_2 = c_2\left(p, \Omega, \per\right)>0$. If $v = v_s + v_p$ is another solution with $v_s\in\WSRD{2}{q_1}\left(\Omega\right)$ and $v_p\in \oPR\WSRper{2}{q_2}(\R; \LR{q_2}(\Omega)) \cap \oPR\WSRper{1}{q_2}\left(\R; \WSR{2}{q_2}(\Omega)\right), q_1, q_2\in (1, \infty)$, then $\us - v_s$ is a polynomial of order 1 and $\up = v_p$.
\end{thm}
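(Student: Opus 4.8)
The plan is to decouple \eqref{damp_waveN} by means of the complementary projections $\PR$ and $\oPR$. Since $\PR$ and $\oPR$ commute with $\partial_t$, with $\Delta$ and with the Neumann trace, applying them to \eqref{damp_waveN} produces the two problems
\begin{align*}
&-\Delta\us=\PR f\ \tin\Omega,\qquad \pdn{\us}=\PR g\ \ton\partial\Omega,\\
&\partial_t^2\up-\Delta\up-\lambda\partial_t\Delta\up=\oPR f\ \tin\R\times\Omega,\qquad \pdn{\up}=\oPR g\ \ton\R\times\partial\Omega,\qquad \PR\up=0,
\end{align*}
which can be solved independently. As $\PR$ and $\oPR$ are bounded on the relevant Lebesgue, Sobolev and trace spaces, a solution of each subproblem bounded in terms of its own data immediately yields $\uvel=\us+\up$ with the asserted regularity together with \eqref{est_3}--\eqref{est_4}.

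The steady-state problem is exactly the $\LR{p}$-theory of the Neumann Laplacian on $\R^3$, on $\R^3_+$ and on bounded $C^{1,1}$-domains. When $\Omega$ is bounded, the divergence theorem shows that solvability forces $\int_\Omega\PR f+\int_{\partial\Omega}\PR g=0$, which is $\tfrac1{\per}$ times \eqref{CompCond}; for the two unbounded domains no such condition is needed. In every case one obtains $\us\in\WSRD{2}{p}(\Omega)$ satisfying \eqref{est_3}, unique up to the kernel of the homogeneous Neumann problem, which consists of polynomials of degree at most $1$ — for $\R^3_+$ one sees this by even reflection across $\set{x_3=0}$ and a Liouville argument on $\R^3$, and for bounded $\Omega$ the kernel is the constants. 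This settles \eqref{est_3} and the uniqueness of $\us$.

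For the purely periodic problem I would first reduce to homogeneous boundary data: by surjectivity of the Neumann trace onto $\TPNper$, choose $G\in\oPR\WSRper{2}{p}(\R;\LR{p}(\Omega))\cap\oPR\WSRper{1}{p}(\R;\WSR{2}{p}(\Omega))$ with $\TN G=\oPR g$ and $\norm{G}_{\SolS{p}}\lesssim\norm{\oPR g}_{\TPN}$; then $w:=\up-G$ solves the equation with homogeneous Neumann data and right-hand side $\tilde f:=\oPR f-\bp{\partial_t^2 G-\Delta G-\lambda\partial_t\Delta G}\in\oPR\LRper{p}(\R;\LR{p}(\Omega))$, with $\norm{\tilde f}_p\lesssim\norm{\oPR f}_p+\norm{\oPR g}_{\TPN}$. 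For $\Omega=\R^3$ (where $G=0$ and there is no boundary) the solution is the representation \eqref{split} with the factor $1-\delta_\Z(k)$; to obtain \eqref{est_4} I apply $\partial_t^2$, $\partial_t\partial_x^\alpha$ and $\partial_x^\alpha$, $\snorm{\alpha}\le2$, under $\iFT_\grp$ and verify that each resulting symbol is an $\LR{p}(\grp)$-multiplier. Each one extends to a symbol on $\R\times\R^3$ by replacing $1-\delta_\Z(k)$ with a smooth cut-off that vanishes near $\eta=0$ and equals $1$ on $\set{\snorm{\eta}\ge\perf}$; this is legitimate precisely because the Kelvin--Voigt term keeps $\labs{\,|\xi|^2-\eta^2+i\lambda\eta|\xi|^2\,}$ bounded away from $0$ on $\set{\snorm{\eta}\ge\perf}$, so that the extended symbol is smooth on all of $\R^4$ and satisfies the Marcinkiewicz condition \eqref{Mar}; Lemma~\ref{Marcinkiewicz} and the transference principle Lemma~\ref{transference} (with $\grpH=\R\times\R^3$ and $\Phi(k,\xi)=(k,\xi)$) then give \eqref{est_4}. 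For $\Omega=\R^3_+$, after the reduction above, even reflection across $\set{x_3=0}$ turns $\tilde f$ into a whole-space right-hand side, and the even part of the corresponding whole-space solution restricts to a solution of the half-space problem. For bounded $C^{1,1}$-domains I would localize with a partition of unity and flatten the boundary charts by a $C^{1,1}$-change of variables, producing half-space problems perturbed by lower-order terms that are small on small charts; applying the two model estimates piece by piece and summing, the commutators with the cut-offs (which depend only on $x$, so the one with $\partial_t^2$ vanishes) and the perturbations are controlled by $\norm{w}_{\WSRper{1}{p}(\R;\WSR{1}{p}(\Omega))}$, which by interpolation is $\le\epsilon\norm{w}_{\SolS{p}}+C_\epsilon\norm{w}_{\LRper{p}(\R;\LR{p}(\Omega))}$. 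The residual $\LR{p}$-term is removed by a compactness argument based on the compact embedding of $\SolS{p}(\torus\times\Omega)$ into $\LR{p}(\torus\times\Omega)$ together with the fact that a purely periodic $\SolS{p}$-solution with zero data vanishes, since each Fourier coefficient $\widehat{w}(k,\cdot)$ then solves $(-\Delta-z_k)\widehat{w}(k,\cdot)=0$ under the Neumann condition with $z_k:=\tfrac{k^2}{1+i\lambda k}\notin[0,\infty)$ for $k\ne0$, hence lies outside the spectrum of the Neumann Laplacian. This same observation yields the uniqueness of $\up$ claimed in the theorem, and existence on the bounded domain follows from the a priori estimate by a closed-range/density argument, the range containing all data with finitely many time-modes (solved via the resolvents $(-\Delta_N-z_k)^{-1}$).

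The main obstacle is the bounded-domain case of the purely periodic problem: making the localization rigorous — controlling the commutators between the cut-offs and the mixed operator $\partial_t^2-\Delta-\lambda\partial_t\Delta$, choosing the charts small enough that the flattening perturbations are genuinely absorbable, and running the compactness/uniqueness step that removes the lower-order term. A subsidiary technical point is the uniform verification of \eqref{Mar} for the whole family of extended symbols; here the structure of the damping is used in an essential way, as it is exactly the term $i\lambda k|\xi|^2$ that, after the cut-off at $\snorm{\eta}=\perf$, keeps the symbols smooth and bounded at the origin. Apart from the Neumann boundary condition, the argument runs parallel to the one for Theorem~\ref{max_regD}.
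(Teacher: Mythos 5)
Your proposal is correct and follows essentially the same route as the paper: decouple into the steady-state Neumann--Poisson problem and the purely periodic problem via $\PR$ and $\oPR$, solve the purely periodic problem in the whole space by Fourier multiplier analysis (extend the symbol by replacing $1-\delta_\Z$ with a smooth cut-off, then apply Marcinkiewicz via the de~Leeuw transference principle), pass to the half-space by even reflection, and treat bounded $C^{1,1}$-domains by flattening and localization, with a compactness argument to remove the lower-order $\LR{p}$-term and the non-real resolvent parameter $z_k=k^2/(1+i\lambda k)$ giving uniqueness and density of the range. One imprecision worth flagging: when you say the flattening charts produce ``lower-order terms that are small on small charts'' controlled by $\norm{w}_{\WSRper{1}{p}(\R;\WSR{1}{p})}$, this is not accurate for the change-of-variables perturbation $\tilde R$ itself, which contains genuinely second-order (and $\partial_t\partial_x^2$-order) terms such as $-|\grad\omega|^2\partial_3^2\tilde\uvel$ and $-\lambda|\grad\omega|^2\partial_t\partial_3^2\tilde\uvel$; these are absorbed by the smallness of $\norm{\grad\omega}_\infty$ rather than by being lower-order, as you correctly state a few lines later. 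Only the commutators with the spatial cut-offs are genuinely lower-order. An additional Neumann-specific detail the paper spells out and you elide is that the Neumann trace does not commute with the flattening map: $\Phi[\TN\uvel]$ picks up both a smooth positive multiplicative factor and an additional first-order correction $\TD\tilde S\tilde\uvel$ that must itself be absorbed by smallness of $\delta$; your localization would need to include this computation.
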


\begin{cor}\label{homeo}
Let $\Omega$ and $p$ be as in Theorem \ref{max_regD}. The operator  
\begin{align*}
&\operatorname{A}\colon\oPR\WSRper{2}{p}(\R; \LR{p}(\Omega)) \cap \oPR\WSRper{1}{p}\left(\R; \WSR{2}{p}(\Omega)\right) \\ 
&\qquad\qquad\qquad\qquad\to \oPR\LRper{p}(\R; \LR{p}\left(\Omega\right))\times\oPR T^p_{S, per}(\R\times\partial\Omega), \\
&\operatorname{A}(\up) := \left(\partial_{t}^2\up - \Delta\up - \lambda\partial_t\Delta\up, \operatorname{Tr}_S\up\right), 
\end{align*}
with $S\in\{D, N\}$, is a homeomorphism.
\end{cor}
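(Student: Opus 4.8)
The plan is to obtain the corollary as a direct consequence of Theorems \ref{max_regD} and \ref{max_regN}, together with the mapping properties of the projections $\PR$, $\oPR$ and of the trace operators recorded in Section \ref{FunktionSpacesSection}. Concretely, I would verify in turn that $\operatorname{A}$ is a bounded linear operator into the stated (purely periodic) target space, that it is surjective, that it is injective, and that its inverse is bounded; the last point then identifies $\operatorname{A}$ as a homeomorphism.

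For boundedness, linearity is clear and the required estimate follows at once from the definitions: the three terms $\partial_t^2\up$, $\Delta\up$, $\partial_t\Delta\up$ are each controlled by $\norm{\up}_{\SolS{p}}$, and $\operatorname{Tr}_S\up$ lies in $\TPDper$ (resp.\ $\TPNper$) with the corresponding bound by continuity of the trace operator $\TD$ (resp.\ $\TN$). To see that $\operatorname{A}(\up)$ lands in the purely periodic subspace, note that $\PR$ acts only in the time variable, hence commutes with $\Delta$ and with $\operatorname{Tr}_S$ and annihilates $\up$, so $\PR\Delta\up = \Delta\PR\up = 0$ and $\PR\operatorname{Tr}_S\up = \operatorname{Tr}_S\PR\up = 0$, while $\PR\partial_t^2\up = \PR\partial_t\Delta\up = 0$ by $\per$-periodicity.

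For surjectivity, let $(f,g)$ belong to the target space, so $\PR f = 0$ and $\PR g = 0$. In the Neumann case with $\Omega$ bounded, this forces the compatibility condition \eqref{CompCond}, since $\int_0^\per\!\int_\Omega f\,\dx\dt = \per\int_\Omega\PR f\,\dx = 0$ and likewise $\int_0^\per\!\int_{\partial\Omega}g\,\dS\dt = \per\int_{\partial\Omega}\PR g\,\dS = 0$. Theorem \ref{max_regD} (resp.\ \ref{max_regN}) then yields a solution $\uvel = \us + \up$, and estimate \eqref{est_1} (resp.\ \eqref{est_3}) gives $\snorm{\us}_{2,p} = 0$; thus $\us$ is affine and time-independent, so $\partial_t^2\us = \Delta\us = \partial_t\Delta\us = 0$. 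Taking the time-average of the boundary condition and using $\PR\operatorname{Tr}_S\up = 0$ together with the time-independence of $\us$ yields $\operatorname{Tr}_S\us = \PR g = 0$; hence $\up$ by itself satisfies $\partial_t^2\up - \Delta\up - \lambda\partial_t\Delta\up = f$ and $\operatorname{Tr}_S\up = g$, i.e.\ $\operatorname{A}(\up) = (f,g)$. For injectivity, if $\operatorname{A}(\wvel) = 0$ then $\wvel$ is a solution of the homogeneous problem; comparing with the trivial solution (whose steady-state part is $0$) in the uniqueness assertion of Theorem \ref{max_regD} (resp.\ \ref{max_regN}) forces the purely periodic parts to agree, whence $\wvel = 0$.

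Finally, the boundedness of $\operatorname{A}^{-1}$ is exactly estimate \eqref{est_2} (resp.\ \eqref{est_4}): for purely periodic data, $\norm{\operatorname{A}^{-1}(f,g)}_{\SolS{p}} = \norm{\up}_{\SolS{p}}\leq c_2\bp{\norm{\oPR f}_p + \norm{\oPR g}} = c_2\bp{\norm{f}_p + \norm{g}}$, so $\operatorname{A}$ is a bounded bijection with bounded inverse. I do not expect a substantial obstacle here: the argument is essentially bookkeeping once Theorems \ref{max_regD}--\ref{max_regN} are in hand. The only points that need genuine care are the verification that the steady-state component $\us$ is annihilated both by the interior operator and by the boundary trace when the data is purely periodic --- so that $\up$ alone solves the full problem --- and, in the bounded Neumann case, that $\PR f = \PR g = 0$ implies the compatibility condition \eqref{CompCond}.
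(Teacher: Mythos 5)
Your argument is correct and takes the same route as the paper, whose proof of Corollary~\ref{homeo} is literally the one-line remark that it ``follows directly from Theorem~\ref{max_regD} and~\ref{max_regN}.'' You have simply carried out the bookkeeping (boundedness, purely periodic range, surjectivity via $\snorm{\us}_{2,p}=0$ and $\operatorname{Tr}_S\us=\PR g=0$, injectivity from the uniqueness assertions, and boundedness of the inverse from \eqref{est_2}/\eqref{est_4}) that the paper leaves implicit.
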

\begin{proof}
Follows directly from Theorem \ref{max_regD} and \ref{max_regN}.
\end{proof}

We divide the proof of Theorem \ref{max_regD} and Theorem \ref{max_regN} into a number of steps.

\subsection{The Whole-Space}\label{whole}

In the whole-space case the resolution to $\per$-time-periodic solutions to \eqref{damp_waveD} and \eqref{damp_waveN} is equivalent, via the quotient mapping $\pi$, to the resolution to the system 
\begin{align}\label{WDonGrp}
\partial_{t}^2\uvel - \Delta\uvel - \lambda\partial_t\Delta\uvel =  f \quad \tin\grp
\end{align}
on the group $\grp$. The system \eqref{WDonGrp} can be investigated in the framework introduced in Section \ref{GroupSettingSection}. 

We consider $f\in\LR{p}(\grp)$. We use the projections $\PR$ and $\oPR$ to decompose $f$ as $$f = \PR f + \oPR f \in\LR{p}(\R^3)\oplus\oPR\LR{p}(\grp),$$ and seek a solution $\uvel$ to \eqref{WDonGrp} as a sum 
$\uvel = \us + \up$,
where $\us$ is a solution to
\begin{align}\label{steady}
-\Delta\us = \PR f \quad \tin\R^3
\end{align}
and $\up$ a solution to
\begin{align}\label{damp_wave_per}
\partial_{t}^2\up - \Delta\up - \lambda\partial_t\Delta\up = \oPR f \quad \tin\grp.
\end{align}
The resolution to the elliptic problem \eqref{steady} is well known. We therefore turn our focus to \eqref{damp_wave_per}.

\begin{lem}\label{peri_whole}
Let $p\in (1,\infty)$. For any $f\in\oPR\LR{p}(\grp)$ there exists a solution ${\uvel\in \SolS{p}(\grp)}$ to \eqref{damp_wave_per}. Moreover 
\begin{align}\label{reg_per_whole}
\norm{\uvel}_{\SolS{p}}\leq c\norm{f}_p,
\end{align}
were $c=c(p, \per)>0$.
The solution $\uvel$ is unique in $\TDRper(\grp)$.
\end{lem}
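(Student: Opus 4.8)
The plan is to construct the solution via the explicit Fourier representation on $\grp$, to estimate it by transferring the associated multiplier to an $\R^4$ setting and invoking the Marcinkiewicz-type theorem, Lemma \ref{Marcinkiewicz}, and to conclude with a density argument together with a uniqueness argument carried out on the Fourier side. First I would reduce to Schwartz data: since $\CRci(\grp)\subset\SR(\grp)$ is dense in $\LR{p}(\grp)$ and $\oPR$ is a bounded projection mapping $\SR(\grp)$ into itself, $\oPR\SR(\grp)$ is dense in $\oPR\LR{p}(\grp)$, so I may assume $f\in\oPR\SR(\grp)$, whence $\ft f\in\SR(\dualgrp)$ is supported in $\setc{(k,\xi)\in\dualgrp}{k\neq 0}$. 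On this set the symbol $\mmultiplier(k,\xi):=\snorm{\xi}^2-k^2+i\lambda k\snorm{\xi}^2$ never vanishes --- indeed its only zero in $\R\times\R^3$ is the origin, because vanishing of the imaginary part forces $k=0$ or $\xi=0$ and then the real part forces the other variable to vanish as well. Consequently $\mmultiplier^{-1}$ is smooth and bounded on $\supp\ft f$ together with all its $\xi$-derivatives, so $\uvel:=\iFT_\grp\nb{\mmultiplier^{-1}\ft f}\in\SR(\grp)$; it is purely periodic because $\ft\uvel$ vanishes at $k=0$, and applying $\FT_\grp$ shows it solves \eqref{damp_wave_per}.

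To establish $\norm{\uvel}_{\SolS{p}}\leq c\norm{f}_p$ for such $f$ it suffices to bound $\norm{\partial_x^\alpha\uvel}_p$ and $\norm{\partial_t\partial_x^\alpha\uvel}_p$ for $\snorm{\alpha}\leq 2$ together with $\norm{\partial_t^2\uvel}_p$, since these control $\norm{\uvel}_{\SolS{p}}$; for each of these operators $D$ one has $\ft{D\uvel}=\mmultiplier_D\,\ft f$ with $\mmultiplier_D:=\mmultiplier^{-1}q_D$, where $q_D$ is the (monomial) symbol of $D$ in $(k,\xi)$. The key point is that every $\mmultiplier_D$ is \emph{bounded} on the set $\set{\snorm{k}\geq\frac{\pi}{\per}}$, which contains $\supp\ft f$: from $\snorm{k}\geq\frac{\pi}{\per}$ one checks $\snorm{\mmultiplier}\geq c_0(\lambda,\per)>0$ and, on the same set, $\snorm{\xi}^2\leq c\snorm{\mmultiplier}$, $k^2\leq c\snorm{\mmultiplier}$ and $\snorm{k}\snorm{\xi}^2\leq\lambda^{-1}\snorm{\mmultiplier}$ (treating the cases $\snorm{\xi}^2\leq\thalf k^2$ and $\snorm{\xi}^2>\thalf k^2$ separately), and these inequalities dominate every numerator $q_D$. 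To turn boundedness into an $\LR{p}$-bound I would invoke the transference principle, Lemma \ref{transference}, with the inclusion $\Phi:\dualgrp\ra\R\times\R^3$, $\Phi(k,\xi):=(k,\xi)$. Fixing $\cutoff\in\CRi(\R)$ with $\cutoff\equiv 0$ on $\snorm{k}\leq\frac{\pi}{\per}$ and $\cutoff\equiv 1$ on $\snorm{k}\geq\perf$, the function $\tmmultiplier_D(k,\xi):=\cutoff(k)\,q_D(k,\xi)\,(\snorm{\xi}^2-k^2+i\lambda k\snorm{\xi}^2)^{-1}$ on $\R^4$ is smooth and bounded --- the cutoff removes the origin, the only zero of the denominator, and the inequalities above furnish the bound --- it coincides with $\mmultiplier_D$ along $\Phi$ on $\supp\ft f$ while $\ft f$ vanishes at $k=0$, and it satisfies the Marcinkiewicz condition \eqref{Mar} with $n=4$: applying the operators $k\partial_k$ and $\xi_i\partial_{\xi_i}$ to $(\snorm{\xi}^2-k^2+i\lambda k\snorm{\xi}^2)^{-1}$ produces, by the quotient rule, expressions whose numerators are again dominated by powers of the denominator via the same inequalities, while $\cutoff$ and $q_D$ contribute only bounded factors under these operators. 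Lemma \ref{Marcinkiewicz} then shows $\tmmultiplier_D$ is an $\LR{p}(\R^4)$-multiplier, Lemma \ref{transference} gives $\norm{D\uvel}_p\leq c\norm{f}_p$, and summing over $D$ yields $\norm{\uvel}_{\SolS{p}}\leq c\norm{f}_p$ with $c=c(p,\per)$.

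For general $f\in\oPR\LR{p}(\grp)$ I would pick $f_n\in\oPR\SR(\grp)$ with $f_n\ra f$ in $\LR{p}$; the corresponding solutions $\uvel_n$ are Cauchy in $\SolS{p}(\grp)$ by the estimate, hence converge to some $\uvel\in\SolS{p}(\grp)$, which inherits the estimate and, by $\LR{p}$-convergence of each term, solves \eqref{damp_wave_per} in the distributional sense. For uniqueness, let $w\in\TDRper(\grp)$ solve the homogeneous equation; then $\mmultiplier\,\ft w=0$ in $\TDR(\dualgrp)$, and since $\ft w$ is supported in $\set{k\neq 0}$ while, on each slice $\set{k}\times\R^3$ with $k\neq 0$, the function $\mmultiplier^{-1}(k,\cdot)$ is smooth with polynomially bounded derivatives (hence an $\calo_M$-multiplier of $\mathscr{S}'(\R^3)$), one concludes $\ft w=0$, i.e.\ $w=0$. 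In particular the solution constructed above lies in $\SolS{p}(\grp)\subset\TDRper(\grp)$ and is therefore the unique solution in $\TDRper(\grp)$.

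The step I expect to be the main obstacle is the verification of the Marcinkiewicz bounds for $\tmmultiplier_D$ on $\R^4$: the symbol $\snorm{\xi}^2-k^2+i\lambda k\snorm{\xi}^2$ is genuinely non-homogeneous, and its real part degenerates along the ``light cone'' $\snorm{\xi}^2=k^2$, where boundedness of the multipliers is salvaged only by the damping-induced imaginary part $\lambda k\snorm{\xi}^2$ --- and only because $\snorm{k}$ stays bounded away from $0$ on the relevant region. This is precisely why the purely periodic, ``damped'' part $\uvel$ is isolated from the steady-state part in the decomposition preceding \eqref{damp_wave_per}.
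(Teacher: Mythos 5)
Your proposal follows essentially the same route as the paper: represent the solution via the group Fourier transform with the multiplier $(1-\delta_\Z)(\snorm{\xi}^2-k^2+i\lambda k\snorm{\xi}^2)^{-1}$, extend it to $\R\times\R^3$ by inserting a smooth cutoff in $k$ that vanishes near the origin, invoke the de~Leeuw/Edwards--Gaudry transference lemma (Lemma~\ref{transference}) together with the Marcinkiewicz-type theorem (Lemma~\ref{Marcinkiewicz}), and obtain uniqueness by observing that the Fourier support of a homogeneous solution is confined to $\{(0,0)\}$ while the purely-periodic condition excludes $k=0$. Your organizational variations --- reducing first to Schwartz data and passing to $L^p$ by density, and arguing uniqueness slicewise rather than directly from the support --- are cosmetic, and the only place you merely sketch what the paper carries out in full is the explicit verification of the Marcinkiewicz bounds, which you correctly flag as the heart of the matter.
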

\begin{proof}
Recall that $\FT_\grp\nb{\oPR f} = \left(1 - \delta_\Z\right)\FT_\grp\nb{f}$. Formally applying the Fourier transform $\FT_\grp$ in \eqref{damp_wave_per}, we therefore obtain
\begin{align}\label{sol_up}
\uvel = \iFT_\grp\left[\frac{\left(1 - \delta_\Z\right)}{|\xi|^2 - k^2 + i\lambda k|\xi|^2}\FT_\grp\nb{f}\right].
\end{align}
We put
\begin{align}
M: \dualgrp\ra\C, \quad M\left(k, \xi\right) := \frac{1 - \delta_\Z}{|\xi|^2 - k^2 + i\lambda k|\xi|^2}
\end{align}
and write
\begin{align}\label{sol_multi}
\uvel = \iFT_\grp\left[M\left(k, \xi\right)\FT_\grp\nb{f}\right].
\end{align}
Since $M\in\LR{\infty}(\dualgrp)$ is bounded, it is clear that $\uvel$ given by \eqref{sol_up} is well-defined as an element in $\TDR(\grp)$. We want to use the transference principle for multipliers, \textit{i.e.}, Lemma \ref{transference}, to establish \eqref{reg_per_whole}. For this purpose, let $\chi$ be a "cut-off" function with
\begin{align*}
\chi\in\CRci(\R; \R), \quad \chi(\eta) = 1 \text{ for } \left|\eta\right|\leq\frac{\pi}{\per}, \quad \chi(\eta) = 0 \text{ for }\left|\eta\right|\geq\frac{2\pi}{\per}.
\end{align*}
We then define
\begin{align}\label{multiplier_R}
m:\R\times\R^3\ra\C, \quad m\left(\eta, \xi\right) := \frac{1 - \chi(\eta)}{|\xi|^2 - \eta^2 + i\lambda\eta |\xi|^2}.
\end{align}
In order to employ Lemma \ref{transference}, we define the group $\grpH := \R\times\R^3$ and put
\begin{align}
\Phi: \dualgrp\ra\dualgrpH, \quad \Phi(k, \xi):= (k, \xi).
\end{align}
Recall that $\dualgrpH = \R\times\R^3$. Clearly, $\Phi$ is a continuous homomorphism. Moreover, 
\begin{align}\label{M}
M = m\circ\Phi.
\end{align}
Consequently, if we can show that $m$ is a continuous $\LR{p}(\grpH)$-multiplier, we may conclude from Lemma \ref{transference} that $M$ is an $\LR{p}(\grp)$-multiplier. Observe the only zero of the denominator in \eqref{multiplier_R} is $(\eta, \xi)=(0, 0)$. Since the numerator $1 - \chi(\eta)$ in \eqref{multiplier_R} vanishes in a neighborhood of $(0, 0)$, we see that $m$ is continuous; in fact $m$ is smooth. We shall now apply Marcinkiewicz's multiplier theorem to show that $m$ is an $\LR{p}(\grpH)$-multiplier. For this purpose we must verify that
\begin{align}\label{Mar_u}
\sup_{\varepsilon\in\{0, 1\}^4}\sup_{(\eta, \xi)\in\R\times\R^3}\left|\xi_1^{\varepsilon_1}\xi_2^{\varepsilon_2}\xi_3^{\varepsilon_3}\eta^{\varepsilon_4}\partial_{1}^{\varepsilon_1}\partial_{2}^{\varepsilon_2}\partial_{3}^{\varepsilon_3}\partial_{\eta}^{\varepsilon_4}m(\eta, \xi)\right|<\infty.
\end{align}
Since $m$ is smooth, we only need to show that all functions of type
\begin{align*}
\left(\eta, \xi\right)\ra \xi_1^{\varepsilon_1}\xi_2^{\varepsilon_2}\xi_3^{\varepsilon_3}\eta^{\varepsilon_4}\partial_{1}^{\varepsilon_1}\partial_{2}^{\varepsilon_2}\partial_{3}^{\varepsilon_3}\partial_{\eta}^{\varepsilon_4}m(\eta, \xi)
\end{align*}
stay bounded as $\left|\left(\eta, \xi\right)\right|\ra\infty$. Observe that these functions are rational functions with non-vanishing denominators away from $(0, 0)$.
Since $1 - \chi(\eta)$ vanish in a neighborhood of $(0,0)$, it follows that
$\left| m(\eta, \xi)\right| \leq c_0$.
We further estimate
\begin{align*}
|\eta\partial_\eta &m(\eta, \xi)| \leq \frac{\left|\chi^\prime(\eta)\right|\left|\eta\right|}{\sqrt{\left(|\xi|^2 - \eta^2\right)^2 + \lambda^2\eta^2|\xi|^4}} + \frac{\left| 1 - \chi(\eta)\right|\sqrt{4\eta^4 + \lambda^2\eta^2|\xi|^4}}{\left(|\xi|^2 - \eta^2\right)^2 + \lambda^2\eta^2|\xi|^4} \\
&\leq \frac{2\pi\left|\chi^\prime(\eta)\right|}{\per\sqrt{\left(|\xi|^2 - \eta^2\right)^2 + \lambda^2\eta^2|\xi|^4}} \\
&\quad + \frac{\left| 1 - \chi(\eta)\right|}{\sqrt{\left(|\xi|^2 - \eta^2\right)^2 + \lambda^2\eta^2|\xi|^4}}\sqrt{\frac{4}{\left(\frac{|\xi|^2}{\eta^2} - 1\right)^2 + \lambda^2\frac{|\xi|^4}{\eta^2}} + \frac{1}{\frac{\left(|\xi|^2 - \eta^2\right)^2}{\lambda^2\eta^2|\xi|^4} + 1}} \\
&\leq \frac{2\pi\left|\chi^\prime(\eta)\right|}{\per\sqrt{\left(|\xi|^2 - \eta^2\right)^2 + \lambda^2\eta^2|\xi|^4}} + c_0\sqrt{4c_1 + 1} 
\end{align*}
with $c_1 := \min(1, \frac{\per^2}{\lambda^2\pi^2})$. Since the denominator $\left(|\xi|^2 - \eta^2\right)^2 + \lambda^2\eta^2|\xi|^4$ does not vanish for $\eta\in\supp\left|\chi^\prime(\eta)\right|\subset\setc{\eta\in\R}{\frac{\pi}{\per}<\snorm{\eta}<\frac{2\pi}{\per}}$, there is a constant $c_2>0$ such 
\begin{align*}
|\eta\partial_\eta &m(\eta, \xi)| \leq c_2 + c_0\sqrt{4c_1 + 1}.
\end{align*}
For the partial derivative $\partial_{j} m$ we have
\begin{align*}
|\xi_j\partial_{j} m(\eta, \xi)| &= 2\left| 1 - \chi(\eta)\right| \frac{|\xi_j|^2 |1 + i\lambda\eta|}{\left||\xi|^2 - \eta^2 + i\lambda\eta|\xi|^2\right|^2} \\
&\leq \frac{2\left| 1 - \chi(\eta)\right|}{\sqrt{\left(|\xi|^2 - \eta^2\right)^2 + \lambda^2\eta^2|\xi|^4}} \sqrt{\frac{|\xi|^4 + \lambda^2\eta^2|\xi|^4}{\left(|\xi|^2 - \eta^2\right)^2 + \lambda^2\eta^2|\xi|^4}}\\
&\leq \frac{2\left| 1 - \chi(\eta)\right|}{\sqrt{\left(|\xi|^2 - \eta^2\right)^2 + \lambda^2\eta^2|\xi|^4}}\sqrt{\frac{1}{\lambda^2\eta^2} + 1} \leq 2c_0c_3,
\end{align*}
with $c_3 := \sqrt{\frac{\per^2}{\lambda^2\pi^2} + 1}$. Furthermore, 
\begin{align*}
|\xi_j\xi_k\partial_{j}\partial_{k} m(\eta, \xi)| \leq 8\left| 1 - \chi(\eta)\right|\frac{|\xi|^4\left( 1 + \lambda^2\eta^2\right)}{\left(\left(|\xi|^2 - \eta^2\right)^2 + \lambda^2\eta^2|\xi|^4\right)^{3/2}} \leq 8 c_0 c_3^2
\end{align*}
and
\begin{align*}
|\xi_j\eta\partial_{j}\partial_\eta m(\eta, \xi)| &\leq 2\left|\chi^\prime(\eta)\right|\frac{\sqrt{|\xi|^4 + \lambda^2\eta^2|\xi|^4}}{\left(|\xi|^2 - \eta^2\right)^2 + \lambda^2\eta^2|\xi|^4} + 2\left| 1 - \chi(\eta)\right|\frac{\lambda|\xi|^2 |\eta|}{\left(|\xi|^2 - \eta^2\right)^2 + \lambda^2\eta^2|\xi|^4} \\
&\quad + 4\left| 1 - \chi(\eta)\right|\frac{\sqrt{4\eta^4 + \lambda^2\eta^2|\xi|^4}\sqrt{|\xi|^4 + \lambda^2\eta^2|\xi|^4}}{\left(\left(|\xi|^2 - \eta^2\right)^2 + \lambda^2\eta^2|\xi|^4\right)^{3/2}}\\
&\leq \frac{2\left|\chi^\prime(\eta)\right| c_3}{\sqrt{\left(|\xi|^2 - \eta^2\right)^2 + \lambda^2\eta^2|\xi|^4}} + 2c_0 + 4c_0c_3\sqrt{4c_1 + 1} \\
&\leq \frac{\per}{\pi}c_2c_3 + 2c_0 + 4c_0c_3\sqrt{4c_1 + 1}.
\end{align*}
Boundedness of the terms with derivatives of third order is given by
\begin{align*}
|\xi_j\xi_k\eta\partial_{j}\partial_{k}\partial_\eta m(\eta, \xi)| &\leq \frac{8\left|\chi^\prime(\eta)\right|\cdot|\eta|\cdot|\xi|^4(1+\lambda^2\eta^2)}{\left(\left(|\xi|^2 - \eta^2\right)^2 + \lambda^2\eta^2|\xi|^4\right)^{3/2}} + \frac{16\left| 1 - \chi(\eta)\right|\cdot|\eta|\cdot|\xi|^4\sqrt{1+\lambda^2\eta^2}}{\left(\left(|\xi|^2 - \eta^2\right)^2 + \lambda^2\eta^2|\xi|^4\right)^{3/2}} \\
&\qquad + \frac{24\left| 1 - \chi(\eta)\right|\cdot|\xi|^4\left( 1 + \lambda^2\eta^2\right)\cdot|\eta|\sqrt{\lambda^2|\xi|^4 + 4\eta^2}}{\left(\left(|\xi|^2 - \eta^2\right)^2 + \lambda^2\eta^2|\xi|^4\right)^{2}} \\
&\leq \frac{8\left|\chi^\prime(\eta)\right| c_3^2}{\sqrt{\left(|\xi|^2 - \eta^2\right)^2 + \lambda^2\eta^2|\xi|^4}} + 16 c_0 c_3 + 24 c_0c_3^2\sqrt{4c_1 + 1} \\
&\leq \frac{4\per}{\pi}c_2c_3^2 + 16 c_0 c_3 + 24 c_0 c_3^2\sqrt{4c_1 + 1}
\end{align*}
and
\begin{align*}
|\xi_j\xi_k\xi_l\partial_{j}\partial_{k}\partial_l m(\eta, \xi)| &\leq 48\left| 1 - \chi(\eta)\right|\frac{|\xi|^6(1 + \lambda^2\eta^2)^{3/2}}{\left(\left(|\xi|^2 - \eta^2\right)^2 + \lambda^2\eta^2|\xi|^4\right)^{2}} \leq 48 c_0 c_3^3.
\end{align*}
We see that
\begin{align*}
|\xi_j\xi_k\xi_l\eta\partial_{j}&\partial_{k}\partial_l\partial_\eta m(\eta, \xi)| \leq \frac{192\left| 1 - \chi(\eta)\right|\cdot|\eta|\cdot|\xi|^6\left(1+\lambda^2\eta^2\right)^{3/2}\sqrt{\lambda^2|\xi|^4 + 4\eta^2}}{\left(\left(|\xi|^2 - \eta^2\right)^2 + \lambda^2\eta^2|\xi|^4\right)^{5/2}}\\
&+ \frac{144\left| 1 - \chi(\eta)\right|\cdot\lambda|\eta|\cdot|\xi|^6\left(1+\lambda^2\eta^2\right)}{\left(\left(|\xi|^2 - \eta^2\right)^2 + \lambda^2\eta^2|\xi|^4\right)^{2}} + \frac{48\left|\chi^\prime(\eta)\right|\cdot|\eta|\cdot|\xi|^6(1+\lambda^2\eta^2)^{3/2}}{\left(\left(|\xi|^2 - \eta^2\right)^2 + \lambda^2\eta^2|\xi|^4\right)^{2}} \\
&\leq 192 c_0c_3^3\sqrt{4c_1 + 1} + 144 c_0 c_3^2 + \frac{48\left|\chi^\prime(\eta)\right| c_3^3}{\sqrt{\left(|\xi|^2 - \eta^2\right)^2 + \lambda^2\eta^2|\xi|^4}} \\
&\leq 192 c_0c_3^3\sqrt{4c_1 + 1} + 144 c_0 c_3^2 + \frac{24\per}{\pi}c_2c_3^3.
\end{align*}
Consequently, we conclude \eqref{Mar_u} and by Marcinkiewicz's multiplier theorem that $m$ is an $\LR{p}(\grpH)$-multiplier. Hence, due to \eqref{M} it follows from Lemma \ref{transference} that $M$ is an $\LR{p}(\grp)$-multiplier. Recalling \eqref{sol_up} or \eqref{sol_multi}, we thus obtain 
\begin{align}\label{Lpu}
\norm{\uvel}_p\leq c\norm{f}_p.
\end{align}
Note that the neighborhood in which $m$ is vanishing becomes small as $\per\ra\infty$, and hence the corresponding bound in \eqref{Mar_u} grows for large periods $\per$. Differentiating $\uvel$ with respect to time and space, we obtain from \eqref{sol_up} the formulas
\begin{align*}
&\partial_t^\beta\uvel = \iFT_\grp\left[\left(ik\right)^{\beta} M\left(k, \xi\right)\FT_\grp\nb{f}\right]\\
&\partial_x^\alpha\uvel = \iFT_\grp\left[i^{|\alpha|}\xi^\alpha M\left(k, \xi\right)\FT_\grp\nb{f}\right]\\
&\partial_t\partial_x^\alpha\uvel = \iFT_\grp\left[i^{|\alpha| + 1}k\xi^\alpha M\left(k, \xi\right)\FT_\grp\nb{f}\right].
\end{align*}
We can repeat the argument above with $\left(ik\right)^{\beta} M(k, \xi)$ in the role of the multiplier $M$, and $(i\eta)^{\beta} m(\eta, \xi)$ in the role of $m$, to conclude
\begin{align}\label{Lput}
\norm{\partial_t^\beta\uvel}_{p}\leq c\norm{f}_p.
\end{align}
Similarly, we obtain
\begin{align}\label{Lpux}
\norm{\partial_x^\alpha\uvel}_{p}\leq c\norm{f}_p,\quad \norm{\partial_t\partial_x^\alpha\uvel}_{p}\leq c\norm{f}_p.
\end{align}
Collecting \eqref{Lpu}-\eqref{Lpux} we conclude \eqref{reg_per_whole}. Due to \eqref{sol_up} it is clear that $\oPR\uvel = \uvel$, whence we have $\uvel\in \SolS{p}(\grp)$.

It remains to show uniqueness. Assume that $v\in\TDR(\grp)$ is another solution with $\PR v = 0$. Therefore, we notice $$\partial_{t}^2\left(\uvel-v\right) - \Delta\left(\uvel-v\right) - \lambda\partial_t\Delta\left(\uvel-v\right) = 0.$$ Applying the Fourier transform $\FT_\grp$, it then follows ${\left(|\xi|^2 - k^2 + i\lambda k|\xi|^2\right)\FT_\grp\nb{\uvel - v} = 0}$ and thus $\supp\FT_\grp\nb{\uvel - v}\subset\{(0, 0)\}$. Recall that $\PR(\uvel - v)$ is time independent. From this we obtain that $\delta_\Z\cdot\FT_\grp\nb{\uvel - v} = \FT_\grp\nb{\PR(\uvel - v)} = 0$ and therefore we must have ${(0, 0)\notin\supp\FT_\grp\nb{\uvel - v}}$. Consequently, we conclude $\supp\FT_\grp\nb{\uvel - v} = \emptyset$ and ${\uvel = v}$.
\end{proof}

\subsection{Dirichlet Boundary Condition}\label{Dirichlet}
Next, we consider the damped wave equation with Dirichlet boundary conditions.
We first treat the half-space case, then the bent half-space case, and finally the bounded domain. 
We utilize the equivalence between the resolution to $\per$-time-periodic solutions to \eqref{damp_waveD} and the 
resolution of the system obtained by replacing the time axis in \eqref{damp_waveD} with the torus $\torus$. The latter system is investigated in
the framework introduced in Section \ref{FunktionSpacesSection}.

\subsubsection{The Half-Space}\label{half-space}

We first consider the half-space case
\begin{align}\label{damp_wave_Dirichlet}
\begin{pdeq}
\partial_{t}^2\uvel-\Delta\uvel - \lambda\partial_t\Delta\uvel &= f && \tin\torus\times\R_+^3, \\
\uvel &= g && \ton\torus\times\partial\R_+^3. 
\end{pdeq}
\end{align}
We make use of a reflection principle argument.
\begin{lem}\label{peri_Dirichlet}
Let $p\in\left(1, \infty\right)$. For any $f\in\oPR\LR{p}(\torus\times\R_+^3)$ and $g\in\oPR\TPD\left(\torus\times\partial\R_+^3\right)$ there exists a unique solution $\uvel\in \SolS{p}(\torus\times\R_+^3)$ to \eqref{damp_wave_Dirichlet} and there is a constant $c = c(p, \per) > 0$ such that
\begin{align}\label{reg_per_Dirichlet}
\norm{\uvel}_{\SolS{p}}\leq c\left(\norm{f}_p + \norm{g}_{\TPD}\right).
\end{align}
If additionally $f\in\oPR\LR{s}(\torus\times\R_+^3)$ and $g\in\oPR\T_D^s\left(\torus\times\partial\R_+^3\right)$ for some $s\in\left( 1, \infty\right)$, then also $\uvel\in \SolS{s}(\torus\times\R_+^3)$.
\end{lem}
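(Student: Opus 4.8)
The strategy is to reduce to homogeneous Dirichlet data by a lifting, extend the resulting forcing term by odd reflection across $\torus\times\partial\R_+^3$, and then invoke the whole-space result Lemma~\ref{peri_whole}. No Poincar\'e map or initial-value theory is needed; everything is pulled back to $\grp=\torus\times\R^3$.

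\emph{Construction.} Using that the trace operator $\TD$ is bounded and surjective (Section~\ref{FunktionSpacesSection}), fix a bounded right inverse $\mathcal{E}$ and set $G:=\oPR\mathcal{E}g$. Since both $\TD$ and $\partial_t^2-\Delta-\lambda\partial_t\Delta$ commute with $\oPR$, one gets $G\in\oPR\WSRper{2}{p}(\R;\LR{p}(\R_+^3))\cap\oPR\WSRper{1}{p}(\R;\WSR{2}{p}(\R_+^3))$ with $\TD G=g$ and $\norm{G}_{\SolS{p}}\le c\norm{g}_{\TPD}$. Writing $\uvel=\wvel+G$, it suffices to solve $\partial_t^2\wvel-\Delta\wvel-\lambda\partial_t\Delta\wvel=\tilde f$ in $\torus\times\R_+^3$ with $\wvel=0$ on $\torus\times\partial\R_+^3$, where $\tilde f:=f-(\partial_t^2-\Delta-\lambda\partial_t\Delta)G\in\oPR\LR{p}(\torus\times\R_+^3)$ and $\norm{\tilde f}_p\le c(\norm{f}_p+\norm{g}_{\TPD})$. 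Let $\tilde f^\ast$ be the odd extension of $\tilde f$ in $x_3$ to $\grp$; odd reflection commutes with $\oPR$, so $\tilde f^\ast\in\oPR\LR{p}(\grp)$ with $\norm{\tilde f^\ast}_p=2^{1/p}\norm{\tilde f}_p$. Lemma~\ref{peri_whole} yields $v\in\SolS{p}(\grp)$, unique in $\TDRper(\grp)$, solving $\partial_t^2v-\Delta v-\lambda\partial_t\Delta v=\tilde f^\ast$ in $\grp$ with $\norm{v}_{\SolS{p}}\le c\norm{\tilde f^\ast}_p$. Since $\tilde f^\ast$ is odd in $x_3$, the function $(t,x',x_3)\mapsto-v(t,x',-x_3)$ solves the same equation and lies in $\SolS{p}(\grp)\subset\TDRper(\grp)$, hence equals $v$ by uniqueness; thus $v$ is odd in $x_3$ and its Dirichlet trace on $\torus\times\partial\R_+^3$ vanishes. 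Then $\wvel:=v|_{\torus\times\R_+^3}\in\SolS{p}(\torus\times\R_+^3)$ solves the homogeneous-boundary problem, $\uvel:=\wvel+G$ is the desired solution, and collecting the estimates gives \eqref{reg_per_Dirichlet}.

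\emph{Uniqueness and extra integrability.} If $\uvel_1,\uvel_2\in\SolS{p}(\torus\times\R_+^3)$ both solve \eqref{damp_wave_Dirichlet}, then $\wvel:=\uvel_1-\uvel_2$ has vanishing Dirichlet trace, so its odd extension $\wvel^\ast$ across $\torus\times\partial\R_+^3$ lies in $\SolS{p}(\grp)$ and solves the homogeneous equation there with $\PR\wvel^\ast=0$; Lemma~\ref{peri_whole} forces $\wvel^\ast=0$, i.e. $\uvel_1=\uvel_2$. For the final assertion I would rerun the construction with a single extension operator $\mathcal{E}$ that is bounded simultaneously for the exponents $p$ and $s$ (such operators are standard, cf.~\cite{DHP}); then $G$ and $\tilde f^\ast$ have the required regularity for both exponents, and because the whole-space solution in Lemma~\ref{peri_whole} is unique in $\TDRper(\grp)$ the element $v$ lies in $\SolS{p}(\grp)\cap\SolS{s}(\grp)$, whence $\uvel=\wvel+G\in\SolS{s}(\torus\times\R_+^3)$; by the uniqueness just shown this coincides with the solution obtained for exponent $p$.

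\emph{Main obstacle.} The delicate point is that odd reflection must be shown to interact correctly with the anisotropic space $\SolS{p}$: the vanishing Dirichlet trace is precisely what guarantees that the reflected function belongs to $\WSRper{1}{p}(\R;\WSR{2}{p}(\R^3))$ across the interface, whereas the reflection of $\partial_t^2\wvel$ needs only $\LR{p}$; verifying this, together with keeping careful track of the $\oPR$-projection and of all constants through lifting, reflection and restriction, is the bulk of the remaining work. The hyperbolic structure itself is already contained in Lemma~\ref{peri_whole}.
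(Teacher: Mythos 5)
Your construction of the solution runs along the same lines as the paper: lift the boundary data via a bounded right inverse of $\TD$, odd-reflect the reduced forcing across $\{x_3=0\}$, invoke Lemma~\ref{peri_whole} on $\grp$, and use whole-space uniqueness together with the symmetry of the data to conclude the Dirichlet trace vanishes. The paper performs the two reductions in the opposite order (homogeneous boundary first, then a lifting), but that is immaterial.

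The one place where you genuinely depart from the paper is the uniqueness step. The paper argues by duality: given a solution $\uvel$ of the homogeneous problem and an arbitrary $\psi\in\oPR\LR{p'}(\torus\times\R^3_+)$, it uses the existence part to produce a zero-trace $\phi$ with $(\partial_t^2-\Delta-\lambda\partial_t\Delta)\phi=\psi$, then tests $\uvel$ against the time-reversed function $\tilde\phi(t,x)=\phi(-t,x)$ (which satisfies the adjoint operator $\partial_t^2-\Delta+\lambda\partial_t\Delta$) to obtain $\int \uvel\psi=0$. You instead odd-reflect the difference $w=\uvel_1-\uvel_2$ directly and appeal to uniqueness in $\TDRper(\grp)$ from Lemma~\ref{peri_whole}. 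Your route is more direct and avoids the time-reversal trick, but it carries the burden you correctly flag: one has to verify that the odd extension of a function in $\SolS{p}(\torus\times\R^3_+)$ with vanishing Dirichlet trace lands in $\SolS{p}(\grp)$ without producing surface distributions on $\{x_3=0\}$ (the relevant checks being that $w^*$ and $\partial_3 w^*$ have matching one-sided traces, which follow from $w|_{x_3=0}=0$ together with the evenness of $\partial_3 w^*$, and that the same holds for $\partial_t w$). The paper's duality argument sidesteps this by never reflecting a solution, only data. Both are valid; yours is slightly leaner provided the reflection regularity is spelled out, while the paper's generalizes more readily to settings where reflection is unavailable. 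The extra-integrability argument in both proofs is the same, resting on $\TDRper(\grp)$-uniqueness from Lemma~\ref{peri_whole}.
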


\begin{proof}
For homogeneous boundary values, \textit{i.e.} $g = 0$, the existence of a solution $\uvel\in \SolS{p}(\torus\times\R_+^3)$ to \eqref{damp_wave_Dirichlet} satisfying \eqref{reg_per_Dirichlet} follows from the reflection principle in combination with Lemma \ref{peri_whole}. We demonstrate this principle for the Dirichlet problem. Define 
\begin{align*}
\tilde{f}(t, x) := \begin{pdeq} &f(t, x^\prime, x_3) && \text{ if } x_3\geq 0, \\
-&f(t, x^\prime, -x_3) && \text{ if } x_3<0,
\end{pdeq}
\end{align*}
with $x^\prime := (x_1, x_2)$. By Lemma \ref{peri_whole} there is a solution $\tilde{\uvel}\in \SolS{p}(\torus\times\R^3)$ to
\begin{align}\label{1}
\partial_{t}^2\tilde{\uvel}-\Delta\tilde{\uvel} - \lambda\partial_t\Delta\tilde{\uvel} &= \tilde{f} \quad\tin\torus\times\R^3
\end{align}
satisfying \eqref{reg_per_Dirichlet}. To classify $\uvel := \tilde{\uvel}_{|\torus\times\R_+^3}$ as a solution to \eqref{damp_wave_Dirichlet}, we still have to verify that $\uvel$ satisfies the boundary condition. For this purpose, we show that $v(t,x) := -\tilde{\uvel}(t,x^\prime, -x_3)$ is another solution to \eqref{1}. We observe that
\begin{align*}
\left(\partial_t^2 - \Delta - \lambda\partial_t\Delta\right) v(t, x) &= \left(-\partial_t^2 + \Delta + \lambda\partial_t\Delta\right) \tilde{\uvel}(t, x^\prime, -x_3) = -\tilde{f}(t, x^\prime, -x_3) = \tilde{f}(t, x).
\end{align*}
Since a solution to \eqref{1} is unique in the whole-space case by Lemma \ref{peri_whole}, we obtain $\tilde{\uvel}(t, x^\prime, x_3) = -\tilde{\uvel}(t, x^\prime, -x_3)$ and thus 
\begin{align*}
\TD\left[\tilde{\uvel}(t, x^\prime, x_3)_{|\torus\times\R^3_+}\right] = -\TD\left[\tilde{\uvel}(t, x^\prime, -x_3)_{|\torus\times\R^3_+}\right].
\end{align*}
Consequently, $\TD\left[\tilde{\uvel}_{|\torus\times\R^3_+}\right]=0$. 
We conclude that $\uvel(t, x) := \tilde{\uvel}(t, x)_{|\torus\times\R_+^3}$ is a solution to \eqref{damp_wave_Dirichlet} with $g = 0$
and satisfies \eqref{reg_per_Dirichlet}. Utilizing that  
$\TD:\SolS{p}(\torus\times\R^3_+)\ra\projcompl\T_D^p\left(\torus\times\partial\R_+^3\right)$ is continuous and surjective, we can extend this assertion to the case of 
inhomogeneous boundary values $g\in\oPR\TPD\left(\torus\times\partial\R_+^3\right)$ by a standard lifting argument.

Concerning uniqueness, let $\uvel\in \SolS{p}(\torus\times\R_+^3)$ be a solution to \eqref{damp_wave_Dirichlet} with data $f = 0$ and $g = 0$. Let $\psi\in\oPR\LR{p^\prime}(\torus\times\R_+^3)$ be arbitrary. By the argument above there is a $\phi\in \SolS{p^\prime}(\torus\times\R_+^3)$ such that $\partial_{t}^2\phi - \Delta\phi - \lambda\partial_t\Delta\phi = \psi$ and $\phi\big|_{\torus\times\partial\R_+^3} = 0$. Defining $\tilde{\phi}(t, x) := \phi(-t, x)$, we conclude
\begin{align*}
\frac{1}{\per}\int_0^\per\int_{\R^3_+} \uvel\psi \,\dx\dt &= \frac{1}{\per}\int_{0}^\per\int_{\R^3_+} \uvel\left(\partial_t^2\tilde{\phi} - \Delta\tilde{\phi} + \lambda\partial_t\Delta\tilde{\phi}\right)\,\dx\dt \\ 
&= \frac{1}{\per}\int_{0}^\per\int_{\R_+^3} \left(\partial_t^2\uvel - \Delta\uvel - \lambda\partial_t\Delta\uvel\right)\tilde{\phi} \,\dx\dt = 0.
\end{align*}
Since $\psi$ was arbitrary, it follows that $\uvel = 0$. 

Now assume in addition $f\in\oPR\LR{s}(\torus\times\R_+^3)$ for some $s\in\left( 1, \infty\right)$ and $g=0$. Using the reflection principle in the same way as above, we obtain a solution $\tilde{U}\in\SolS{s}(\torus\times\R^3)$. Lemma \ref{peri_whole} yields that $\tilde{U}$ is unique in $\TDR(\grp)$ and thus $\tilde{U} = \tilde{\uvel}$ in $\TDR(\grp)$. It follows that $\uvel\in\SolS{s}(\torus\times\R^3_+)$. 
By a standard lifting argument, same the is true for inhomogeneous boundary values $g\in\oPR\T_D^s\left(\torus\times\partial\R_+^3\right)$.
\end{proof}

\subsubsection{The Bent Half-Space}\label{Sec_Bent_D}

In the next step, we consider the Dirichlet problem in a bent half-space $\torus\times\R^3_{\omega}$. Here,
${\R^3_\omega := \setc{(x^\prime, x_3)\in\R^3}{x_3 > \omega(x^\prime)}}$ is a perturbation of the half-space $\R^3_+$ by a continuous function $\omega:\R^2\to\R$.

\begin{lem}\label{Bent_Dirichlet}
Let $p\in (1, \infty)$ and $\omega\in C^{0,1}(\R^2)$. There is a constant $\delta = \delta(p) >0$ with the following property: If $\norm{\grad\omega}_\infty, \norm{\grad^2\omega}_\infty < \delta$, then for any $f\in \oPR\LR{p}\left(\torus\times\R^3_{\omega}\right)$ and $g\in\oPR\TPD\left(\torus\times\partial\R^3_{\omega}\right)$ there exists a unique solution $\uvel\in \SolS{p}\left(\torus\times\R^3_{\omega}\right)$ to 
\begin{align}\label{bentD}
\begin{pdeq}
\partial_{t}^2\uvel-\Delta\uvel - \lambda\partial_t\Delta\uvel &= f && \tin\torus\times\R^3_{\omega}, \\
\uvel &= g && \ton\torus\times\partial\R^3_{\omega}.
\end{pdeq}
\end{align}
Moreover, there is a constant $c = c(p, \omega, \per) > 0$ such that
\begin{align}\label{est_Bent_D}
\norm{\uvel}_{\SolS{p}} \leq c\left(\norm{f}_p + \norm{g}_{\TPD}\right).
\end{align}
If additionally $\norm{\grad\omega}_\infty, \norm{\grad^2\omega}_\infty < \min\{\delta(p), \delta(s)\}$ and $f\in \oPR\LR{s}\left(\torus\times\R^3_{\omega}\right)$ and $g\in\oPR\T_D^s\left(\torus\times\partial\R^3_{\omega}\right)$ for some $s\in (1, \infty)$, then $\uvel\in \SolS{s}(\torus\times\R^3_{\omega})$.
\end{lem}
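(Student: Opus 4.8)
The plan is to flatten the boundary of the bent half-space and treat the resulting equation on $\torus\times\R^3_+$ as a small perturbation of the flat problem resolved in Lemma~\ref{peri_Dirichlet}. Concretely, introduce the shear diffeomorphism $\Phi:\R^3_\omega\to\R^3_+$, $\Phi(x',x_3):=(x',x_3-\omega(x'))$, with inverse $\Psi(y',y_3):=(y',y_3+\omega(y'))$. Since $\norm{\grad^2\omega}_\infty<\delta$ forces $\omega\in C^{1,1}(\R^2)$, $\Phi$ is a $C^{1,1}$-diffeomorphism with $\det D\Phi\equiv 1$, so $v(t,y):=\uvel(t,\Psi(y))$ lies in $\SolS{p}(\torus\times\R^3_+)$ if and only if $\uvel\in\SolS{p}(\torus\times\R^3_\omega)$, with comparable norms uniformly under the bounds on $\grad\omega,\grad^2\omega$. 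Because $\Phi$ carries $\partial\R^3_\omega=\graph(\omega)$ onto $\partial\R^3_+$, the Dirichlet condition is preserved; the forcing becomes $\tilde f:=f\circ\Psi$ with $\norm{\tilde f}_p=\norm{f}_p$ (unit Jacobian), and the boundary datum becomes $\tilde g(t,y'):=g(t,y',\omega(y'))$, which lies in $\oPR\TPD(\torus\times\partial\R^3_+)$ with $\norm{\tilde g}_{\TPD}\le c(\omega)\norm{g}_{\TPD}$ by the mapping properties of the Sobolev--Slobodecki\u{\i} spaces under the $C^{1,1}$ graph parametrization. All transformed objects keep zero time-average, since $\omega$ is time-independent.

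A chain-rule computation gives $\Delta_x=\Delta_y+\calb$ on functions pulled back by $\Psi$, with
\begin{align*}
\calb v:=-2\sum_{i=1}^2\partial_i\omega\,\partial_{y_i}\partial_{y_3}v+\snorm{\gradp\omega}^2\,\partial_{y_3}^2 v-(\Delta'\omega)\,\partial_{y_3}v,
\end{align*}
a second-order spatial operator whose coefficients are bounded by $C(\norm{\grad\omega}_\infty+\norm{\grad^2\omega}_\infty)$. Hence \eqref{bentD} transforms into $\partial_t^2 v-\Delta_y v-\lambda\partial_t\Delta_y v=\tilde f+\calr v$, $\TD v=\tilde g$, where $\calr v:=\calb v+\lambda\,\calb(\partial_t v)$ (using $\partial_t\calb=\calb\partial_t$); since $\calb$ involves only second-order spatial derivatives, $\calr:\SolS{p}(\torus\times\R^3_+)\to\oPR\LR{p}(\torus\times\R^3_+)$ is bounded with $\norm{\calr}\le C(\lambda)\,\delta$. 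By Lemma~\ref{peri_Dirichlet} the operator $\linopr\colon v\mapsto(\partial_t^2 v-\Delta v-\lambda\partial_t\Delta v,\TD v)$ is an isomorphism from $\SolS{p}(\torus\times\R^3_+)$ onto $\oPR\LR{p}\times\oPR\TPD$, so the transformed problem is equivalent to $v=\linopr^{-1}(\tilde f,\tilde g)+\linopr^{-1}(\calr v,0)$. Choosing $\delta=\delta(p)$ small enough that $\norm{\linopr^{-1}}\,C(\lambda)\,\delta\le\thalf$, the operator $\id-\linopr^{-1}(\calr\,\cdot\,,0)$ is invertible on $\SolS{p}(\torus\times\R^3_+)$ by a Neumann series, which produces a unique $v$, hence a unique $\uvel:=v\circ\Phi\in\SolS{p}(\torus\times\R^3_\omega)$ solving \eqref{bentD}, with $\norm{v}_{\SolS{p}}\le 2\norm{\linopr^{-1}}\,(\norm{\tilde f}_p+\norm{\tilde g}_{\TPD})$; transforming back yields \eqref{est_Bent_D} with $c=c(p,\omega,\per)$. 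Uniqueness follows from injectivity of $\id-\linopr^{-1}(\calr\,\cdot\,,0)$.

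For the final assertion, assume additionally $f\in\oPR\LR{s}$, $g\in\oPR\T_D^s$ and $\norm{\grad\omega}_\infty,\norm{\grad^2\omega}_\infty<\min\{\delta(p),\delta(s)\}$, so that the Neumann series above converges simultaneously in $\SolS{p}(\torus\times\R^3_+)$ and $\SolS{s}(\torus\times\R^3_+)$. Using the $s$-integrability clause of Lemma~\ref{peri_Dirichlet} at each application of $\linopr^{-1}$ — legitimate because $(\tilde f,\tilde g)$ lies in both target spaces and $\calr:\SolS{s}\to\oPR\LR{s}$ is bounded — every partial sum lies in $\SolS{p}\cap\SolS{s}$; since both norms dominate convergence in $\LRloc{1}(\torus\times\R^3_+)$, the two limits agree and $v\in\SolS{s}$, whence $\uvel\in\SolS{s}(\torus\times\R^3_\omega)$. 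The only genuinely technical points are the chain-rule bookkeeping for $\calb$ — in particular controlling the mixed space--time term $\lambda\,\calb(\partial_t v)$ in the $\SolS{p}$ norm — and the transformation of the trace datum $\tilde g$; the conceptual core is the routine Neumann-series perturbation off the flat half-space estimate, which I expect to present no real obstacle.
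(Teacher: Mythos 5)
Your proposal is correct and takes essentially the same route as the paper: flatten the boundary with the shear diffeomorphism $\phi_\omega$, identify the second-order perturbation $\calr=\calb+\lambda\calb\partial_t$ (the paper's $-\tilde{R}$), bound its $\SolS{p}\to\oPR\LR{p}$ operator norm by $C\delta$, and invoke a small-perturbation-of-isomorphism argument (your explicit Neumann series is the same mechanism the paper phrases as ``$\overline{\calk}$ is a homeomorphism for small $\delta$'') to solve in $\SolS{p}(\torus\times\R^3_+)$ and transport back. The regularity clause is likewise handled identically in spirit — the paper says ``consider intersection spaces $\SolS{p}\cap\SolS{s}$'' where you argue convergence of the Neumann partial sums in both norms simultaneously.
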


\begin{proof}
Let 
\begin{align}\label{Glatt}
\phi_\omega: \R^3_\omega\to\R^3_+,\quad \phi_\omega(x) := \tilde{x} := (x^\prime, x_3 - \omega(x^\prime)).
\end{align}
For a function $\uvel$ defined on $\torus\times\R^3_{\omega}$, we set $\Phi[\uvel](t, \tilde{x}) := \tilde{\uvel}(t, \tilde{x}) := \uvel(t, \phi^{-1}_\omega(\tilde{x}))$, where $(t, \tilde{x})\in\torus\times\R^3_+$. 
Observe that
\begin{align}\label{Glättung}
\Phi\bb{\left(\partial_t^2 - \Delta - \lambda\partial_t\Delta\right)\uvel} = \left(\partial_t^2 - \Delta - \lambda\partial_t\Delta + \tilde{R}\right)\tilde{\uvel},
\end{align}
where $\tilde{R}: \SolS{p}\left(\torus\times\R^3_+\right) \to \oPR\LR{p}\left(\torus\times\R^3_+\right)$ is given by 
\begin{align}\label{Rtilde}
\begin{aligned}
\tilde{R}\tilde{\uvel} := &-\left|\grad\omega\right|^2\partial_3^2\tilde{\uvel} + 2\left(\grad\omega, 0\right)\grad\partial_3\tilde{\uvel} + \left(\Delta\omega\right)\partial_3\tilde{\uvel} \\
&- \lambda\left|\grad\omega\right|^2\partial_t\partial_3^2\tilde{\uvel} + 2\lambda\left(\grad\omega, 0\right)\grad\partial_t\partial_3\tilde{\uvel} + \lambda\left(\Delta\omega\right)\partial_t\partial_3\tilde{\uvel}.
\end{aligned}
\end{align}
Moreover, due to \eqref{reg_per_Dirichlet} we can estimate 
\begin{align}\label{Bent_DirichletPerturbation}
\begin{aligned}
\norm{\tilde{R}\tilde{\uvel}}_p &\leq 8\delta\left(\delta+1\right)\norm{\tilde{\uvel}}_{\SolS{p}} \\
&\leq c\,8\delta\left(\delta+1\right)\bp{\norm{\left(\partial_t^2 - \Delta - \lambda\partial_t\Delta\right)\tilde{\uvel}}_{p}+\norm{\TD\tilde{\uvel}}_{\T_D^p}}.
\end{aligned}
\end{align}
By Lemma \ref{peri_Dirichlet}, the operator 
\begin{align*}
&\tilde{\calk}:\SolS{p}(\torus\times\R^3_+)\ra\oPR\LR{p}(\torus\times\R^3_+)\times\oPR\T_D^p(\torus\times\partial\R^3_+),\\
&\tilde{\calk}(\tilde{\uvel}):= \bp{\partial_t^2\tilde{\uvel} - \Delta\tilde{\uvel} - \lambda\partial_t\Delta\tilde{\uvel},\TD\tilde{\uvel}}
\end{align*}
is a homeomorphism. For sufficiently small $\delta$, we infer from \eqref{Bent_DirichletPerturbation} that also 
\begin{align*}
&\overline{\calk}:\SolS{p}(\torus\times\R^3_+)\ra\oPR\LR{p}(\torus\times\R^3_+)\times\oPR\T_D^p(\torus\times\partial\R^3_+),\\
&\overline{\calk}(\tilde{\uvel}):= \bp{\partial_t^2\tilde{\uvel} - \Delta\tilde{\uvel} - \lambda\partial_t\Delta\tilde{\uvel}+\tilde{R}\tilde{\uvel},\TD\tilde{\uvel}}
\end{align*}
is a homeomorphism. Since $\norm{\grad\omega}_\infty, \ \norm{\grad^2\omega}_\infty < \infty$, it is standard to verify that
\begin{align*}
&\Phi: \oPR\LR{p}\left(\torus\times\R^3_{\omega}\right)\to \oPR\LR{p}\left(\torus\times\R^3_{+}\right),\\
&\Phi: \SolS{p}\left(\torus\times\R^3_{\omega}\right)\to \SolS{p}\left(\torus\times\R^3_{+}\right),\\
&\Phi: \projcompl\T_D^{p}\left(\torus\times\partial\R^3_{\omega}\right)\to \projcompl\T_D^{p}\left(\torus\times\partial\R^3_{+}\right)
\end{align*}
are homeomorphisms.
From \eqref{Glättung} we thus deduce that  
\begin{align*}
&{\calk}:\SolS{p}(\torus\times\R^3_{\omega})\ra\oPR\LR{p}(\torus\times\R^3_\omega)\times\oPR\T_D^p(\torus\times\partial\R^3_{\omega}),\\
&{\calk}({\uvel}):= \bp{\partial_t^2{\uvel} - \Delta{\uvel} - \lambda\partial_t\Delta{\uvel},\TD{\uvel}}
\end{align*}
is a homeomorphism. The existence of a unique solution to \eqref{bentD} that satisfies \eqref{est_Bent_D} thus follows. The regularity assertion follows if we consider intersection spaces $\SolS{p}\cap\SolS{s}$ instead of $\SolS{p}$ in the argument above.
\end{proof}

\subsubsection{Bounded Domains}

The key lemma for bounded domains $\Omega\subset\R^3$ with a boundary of class $C^{1,1}$ reads as follows.

\begin{lem}\label{inj_dense_D}
Let $\Omega\subset\R^3$ be a bounded domain with boundary of class $C^{1,1}$ and let $p\in (1, \infty)$. The operator 
\begin{align*}
&\calk:\SolS{p}(\torus\times\Omega)\ra\oPR\LR{p}(\torus\times\Omega)\times\oPR\T_D^p(\torus\times\partial\Omega),\\
&\calk(\uvel):= \bp{\partial_t^2\uvel - \Delta\uvel - \lambda\partial_t\Delta\uvel,\TD\uvel}
\end{align*}
is injective and has a dense range. Moreover, there exists a constant $c = c(p, \Omega, \per) > 0$ such that for all $\uvel\in \SolS{p}(\torus\times\Omega)$ holds
\begin{align}\label{est_BD_D}
\norm{\uvel}_{\SolS{p}} \leq c\left(\norm{\left(\partial_t^2 - \Delta - \lambda\partial_t\Delta\right)\uvel}_p + \norm{\uvel}_p + \norm{\TD\uvel}_{\TPD}\right).
\end{align}
\end{lem}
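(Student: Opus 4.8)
The plan is to prove the three assertions separately: the a~priori estimate \eqref{est_BD_D} by a localization argument, and injectivity together with density of the range by a Fourier decomposition in time and the spectral properties of the Dirichlet Laplacian on a bounded domain; throughout I write $\calk_0\uvel:=\partial_t^2\uvel-\Delta\uvel-\lambda\partial_t\Delta\uvel$. For the estimate, cover $\overline\Omega$ by finitely many open balls $B_0,\dots,B_N$ with $\overline{B_0}\subset\Omega$ and, for $j\ge1$, so small that after a rigid motion $\Omega\cap B_j=\R^3_{\omega_j}\cap B_j$ for some $\omega_j\in C^{1,1}(\R^2)$ with $\norm{\grad\omega_j}_\infty$ as small as desired (possible at a $C^{1,1}$ — indeed already $C^1$ — boundary by centering $B_j$ at a boundary point with the $x_3$-axis along the normal there), while $\norm{\grad^2\omega_j}_\infty$ need only be finite. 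Fix a subordinate partition of unity $\set{\varphi_j}$ and estimate each $\varphi_j\uvel$. For $j=0$ one extends $\varphi_0\uvel$ by zero to $\torus\times\R^3$; it then solves $\calk_0(\varphi_0\uvel)=\varphi_0 f+\bb{\calk_0,\varphi_0}\uvel$ with $f:=\calk_0\uvel$, and Lemma~\ref{peri_whole} together with its uniqueness in $\TDRper(\grp)$ gives $\norm{\varphi_0\uvel}_{\SolS p}\le c\norm{\varphi_0 f+\bb{\calk_0,\varphi_0}\uvel}_p$. For $j\ge1$ one extends $\varphi_j\uvel$ by zero to $\torus\times\R^3_{\omega_j}$ and pushes it forward to the half-space via $\phi_{\omega_j}$; by the change-of-variables computation in the proof of Lemma~\ref{Bent_Dirichlet}, the result solves a half-space Dirichlet problem for $\partial_t^2-\Delta-\lambda\partial_t\Delta+\tilde R_j$ with right-hand side the transform of $\varphi_j f+\bb{\calk_0,\varphi_j}\uvel$ and boundary datum the transform of $\varphi_j\TD\uvel$, and Lemma~\ref{peri_Dirichlet} applies to the leading operator, $\tilde R_j$ being moved to the right. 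In $\bb{\calk_0,\varphi_j}$ and in $\tilde R_j$ (see \eqref{Rtilde}) every term is of one of two types: either it carries the small factor $\grad\omega_j$ or $\snorm{\grad\omega_j}^2$ and multiplies a top-order derivative in the $\SolS p$-scale (absorbed thanks to the smallness of $\norm{\grad\omega_j}_\infty$), or it has bounded coefficients and multiplies a derivative strictly subordinate to the $\SolS p$-scale, which is controlled by the Ehrling-type interpolation
\begin{align*}
\norm{\partial_t\partial_x^\alpha v}_p+\norm{\partial_x^\alpha v}_p\le\varepsilon\,\norm{v}_{\SolS p}+C_\varepsilon\norm{v}_p\qquad(\snorm\alpha\le1)
\end{align*}
on $\Omega$, respectively on a ball containing the support, together with $\norm{\partial_t v}_p\le\varepsilon\norm{\partial_t^2 v}_p+C_\varepsilon\norm{v}_p$ on $\torus$. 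Summing the local estimates, transporting back to $\Omega$ — the $C^{1,1}$-bounds on $\omega_j$ make $\phi_{\omega_j}$ and multiplication by $\varphi_j$ bounded on all spaces involved, and the boundary data are handled by the continuity and surjectivity of $\TD$ recalled in Section~\ref{FunktionSpacesSection} — and absorbing the $\varepsilon\norm{\uvel}_{\SolS p}$-contributions on the left yields \eqref{est_BD_D}.

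\emph{Injectivity.} Let $\uvel\in\SolS p(\torus\times\Omega)$ with $\calk\uvel=0$. Expanding the time-Fourier series $\uvel=\sum_{0\ne k\in\frac{2\pi}{\per}\Z}\uvel_k(x)\e^{ikt}$ (no $k=0$ mode since $\uvel=\oPR\uvel$, and $\uvel_k\in\WSR 2p(\Omega)$ with $\uvel_k=0$ on $\partial\Omega$ by the definition of $\SolS p$ and continuity of the trace), the equation $\calk_0\uvel=0$ decouples into $(1+i\lambda k)\Delta\uvel_k=-k^2\uvel_k$ in $\Omega$ with $\uvel_k=0$ on $\partial\Omega$. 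Since $\Omega$ is bounded, $\WSR 2p(\Omega)\embeds\LR 2(\Omega)$ for every $p\in(1,\infty)$, so by elliptic regularity $\uvel_k\in\WSR 22(\Omega)\cap\WSRN 12(\Omega)$, and pairing with $\overline{\uvel_k}$ gives $-\tfrac{k^2}{1+i\lambda k}\norm{\uvel_k}_2^2=-\norm{\grad\uvel_k}_2^2\in\R$; as $\tfrac{k^2}{1+i\lambda k}\notin\R$ for $k\ne0$ this forces $\uvel_k=0$. Hence $\uvel=0$.

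\emph{Density of the range.} It suffices to show $\calk^{*}$ is injective. Let $(w,h)\in\oPR\LR{p'}(\torus\times\Omega)\times\bp{\oPR\TPD(\torus\times\partial\Omega)}^{*}$ satisfy $\int_{\torus\times\Omega}w\,\calk_0\uvel+\linf{h}{\TD\uvel}=0$ for all $\uvel\in\SolS p(\torus\times\Omega)$. Testing with $\uvel$ supported away from $\torus\times\partial\Omega$ yields $\partial_t^2 w-\Delta w+\lambda\partial_t\Delta w=0$ in $\torus\times\Omega$; the multiplier analysis of Lemma~\ref{peri_whole}, which is valid for every exponent and for the sign-reversed damping, gives $\SolS{p'}$-regularity of $w$ in the interior, and a standard integration by parts against general test functions shows $w=0$ on $\torus\times\partial\Omega$ and identifies $h$ with a constant multiple of the conormal derivative of $w$. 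Decomposing $w$ into time modes exactly as above — now $(1-i\lambda k)\Delta w_k=-k^2 w_k$ in $\Omega$, $w_k=0$ on $\partial\Omega$, with $\tfrac{k^2}{1-i\lambda k}\notin\R$ for $k\ne0$ — forces $w=0$; then $\linf{h}{\TD\uvel}=0$ for all $\uvel$, and $h=0$ since $\TD$ maps onto $\oPR\TPD(\torus\times\partial\Omega)$. Thus $\calk^{*}$ is injective and the range of $\calk$ is dense.

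\emph{Main difficulty.} The delicate point is the bookkeeping in the localization: one must arrange that everything in $\bb{\calk_0,\varphi_j}$ and $\tilde R_j$ which is not multiplied by the small factor $\grad\omega_j$ is strictly subordinate to the anisotropic $\SolS p$-norm, hence absorbable via the interpolation inequalities together with the lower-order term $\norm{\uvel}_p$ that \eqref{est_BD_D} tolerates on the right. It is exactly this tolerance which makes it enough to invoke the half-space Lemma~\ref{peri_Dirichlet} rather than the bent-half-space Lemma~\ref{Bent_Dirichlet}, so that no smallness of $\norm{\grad^2\omega_j}_\infty$ — which is unobtainable by shrinking $B_j$, since it controls the curvature — is needed. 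A secondary technical point is the interior regularity and integration-by-parts step identifying $\ker\calk^{*}$ with the homogeneous adjoint Dirichlet problem.
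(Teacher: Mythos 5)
Your localization argument for \eqref{est_BD_D} takes a genuinely different route than the paper's: the paper interprets each boundary piece as a bent half-space problem and invokes Lemma~\ref{Bent_Dirichlet} (which requires $\norm{\grad\omega_j}_\infty,\norm{\grad^2\omega_j}_\infty<\delta$), whereas you flatten to $\R^3_+$, apply the flat-half-space Lemma~\ref{peri_Dirichlet}, and absorb $\tilde R_j$ on the right by splitting its terms into small-coefficient$\times$top-order and bounded-coefficient$\times$lower-order, the latter handled by Ehrling interpolation against the $\norm{\uvel}_p$ term that \eqref{est_BD_D} tolerates. This is not merely cosmetic: you correctly observe that shrinking $B_j$ makes $\norm{\grad\omega_j}_\infty$ small but does \emph{not} make $\norm{\grad^2\omega_j}_\infty$ small (it is fixed by the curvature), so the paper's claim that small balls satisfy both smallness hypotheses of Lemma~\ref{Bent_Dirichlet} is at best imprecise; your treatment of the $(\Delta\omega_j)\partial_3$ and $(\Delta\omega_j)\partial_t\partial_3$ terms as lower order (rather than small) repairs this. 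Your injectivity argument is also a valid variant: instead of citing uniqueness for the elliptic problem \eqref{Helmholtz_Dirichlet}, you bootstrap to $\WSR{2}{2}\cap\WSRN{1}{2}$, test against $\overline{\uvel_k}$, and use $\impart\bigl(\frac{k^2}{1+i\lambda k}\bigr)\neq0$; both the paper's citation and your calculation are correct, and both rest on the same time-Fourier decomposition.

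The density part, however, has a genuine gap. You argue by duality: an annihilator $(w,h)$ satisfies $\calk_0^* w=0$ in the interior, and you then want to ``integrate by parts against general test functions'' to extract the boundary condition $w=0$ on $\torus\times\partial\Omega$ and identify $h$. But the interior multiplier regularity you invoke does not reach the boundary: with $w$ only in $\oPR\LR{p'}(\torus\times\Omega)$ near $\partial\Omega$, the boundary terms in the integration by parts (which involve traces of $w$, $\partial_t w$, $\partial_n w$, $\partial_t\partial_n w$) are not defined, and the identification of $h$ with a conormal derivative does not make sense. Closing this would essentially require an $\LR{p'}$ boundary-regularity theory for the adjoint damped wave operator on $\torus\times\Omega$, which is comparable in difficulty to what the lemma is trying to establish in the first place. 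The paper avoids this circularity by a constructive density argument: lift $g$ to $G\in\SolS{p}$, approximate $f-\calk_0 G$ by time-trigonometric polynomials, and solve each mode $h\,\e^{ikt}$ exactly via the resolvent problem \eqref{Helmholtz_Dirichlet}. That route needs only the forward elliptic solvability you already used for injectivity, and no adjoint regularity. You should replace your duality argument with this (or with a comparable direct construction) to make the density claim rigorous.
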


\begin{proof}
Consider for $k\in\frac{2\pi}{\per}\Z\setminus\{0\}$ the equation 
\begin{align}\label{Helmholtz_Dirichlet}
\begin{pdeq}
-k^2v-\left( 1 + ik\lambda\right)\Delta v &= h && \tin\Omega, \\
v &= 0 && \ton\partial\Omega. 
\end{pdeq}
\end{align}
Standard elliptic theory yields for every $h\in\LR{p}(\Omega)$ a unique solution $v\in\WSR{2}{p}(\Omega)$ to \eqref{Helmholtz_Dirichlet}. If $\uvel\in \SolS{p}(\torus\times\Omega)$ satisfies $\calk(\uvel)= 0$, then $\FT_\torus\nb{u}\left(k, \cdot\right)\in\WSR{2}{p}\left(\Omega\right)$ solves \eqref{Helmholtz_Dirichlet} with a homogeneous right-hand side. Here $\FT_\torus$ denotes the Fourier transform on the torus. Consequently $\FT_\torus\nb{u}\left(k, \cdot\right) = 0$. Since $k\in\frac{2\pi}{\per}\Z\setminus\{0\}$ was arbitrary and $\FT_\torus\nb{u}\left(0, \cdot\right) = 0$ by the assumption $\PR\uvel = 0$, it follows that $\uvel = 0$. Consequently, $\calk$ is injective.

To show that $\calk$ has a dense range, consider  $(f,g)\in\oPR\LR{p}(\torus\times\Omega)\times\oPR\T_D^p(\torus\times\partial\Omega)$.
Choose $G\in\SolS{p}(\torus\times\Omega)$ with $\TD G=g$. Since trigonometric polynomials are dense in 
$\LR{p}\bp{\torus;\LR{p}(\Omega)}=\LR{p}(\torus\times\Omega)$, there 
is a sequence $\seqN{p_n}\subset \LR{p}(\torus\times\Omega)$ of trigonometric polynomials with $p_n\ra f-\left(\partial_t^2 G - \Delta G - \lambda\partial_t\Delta G\right)$. If we can find a solution $\tilde\uvel_n$ to $\calk(\tilde\uvel_n)=(p_n,0)$, then $\calk(\tilde\uvel_n+G)\ra(f,g)$, and 
we may conclude density of $\calk$'s range. To show existence of $\tilde\uvel_n$, it clearly suffices to solve $\calk(\tilde\uvel_n)=(p_n,0)$
for a simple trigonometric polynomial $p_n:=h\e^{ikt}$ with arbitrary $h\in\LR{q}(\Omega)$ and $k\in\frac{2\pi}{\per}\Z\setminus\{0\}$. A solution to this problem is given by $\tilde\uvel_n := v_k \e^{ikt}$, where $v_k$ is the solution to \eqref{Helmholtz_Dirichlet}.

Finally, we show \eqref{est_BD_D} by a localization method. We choose finitely many balls $B_j\subset\R^3$, $j\in\{1, \ \ldots, \ m\}$ covering $\Omega$, where each $j\in\{1, \ \ldots, \ m\}$ is of one of the two types:
\begin{enumerate}
\item type $\R^3$: if $\closure{B}{}_j\subset\Omega$,
\item type $\R_{\omega_j}^3$: if $\closure{B}{}_j\cap\partial\Omega\neq\emptyset$.
\end{enumerate}
In the second case, $\omega_j\colon\R^{2}\to\R$ denote Lipschitz functions with 
${\closure{B}{}_j\cap\partial\Omega \subset \graph(\omega_j)}$ in the respective local coordinates. If we choose the balls sufficiently small, the functions $\omega_j$ meet the regularity and smallness assumption in Lemma \ref{Bent_Dirichlet} due to the boundary regularity of $\Omega$. Let $\psi_j\in\CRci\left(\R^3\right)$ be smooth cut--off functions satisfying $\supp\psi_j\subset B_j$ and $\sum\limits_{j=1}^m \psi_j = 1$ in $\Omega$. We obtain for $j\in\{1, \ \ldots, \ m\}$
\begin{align}\label{Ungl_Kugel}
\partial_t^2\left(\psi_j\uvel\right) - \Delta\left(\psi_j\uvel\right) - \lambda\partial_t\Delta\left(\psi_j\uvel\right) = f_j \quad \tin \torus\times\Omega\cap B_j,
\end{align}
where 
\begin{align*}
f_j := \psi_j \left(\partial_t^2 - \Delta - \lambda\partial_t\Delta\right)\uvel - \left(\Delta\psi_j\right)\uvel - 2\left(\nabla\psi_j\right)\nabla\uvel - \lambda\left(\Delta\psi_j\right)\partial_t\uvel - 2\lambda\left(\nabla\psi_j\right)\partial_t\nabla\uvel.
\end{align*}
Depending on whether $j\in\{1, \ \ldots, \ m\}$ is of type $\R^3$ or $\R^3_{\omega_j}$, we interpret \eqref{Ungl_Kugel} as a problem in $\torus\times\R^3$ or $\torus\times\R^3_{\omega_j}$ and obtain from Lemma \ref{peri_whole} or Lemma \ref{Bent_Dirichlet} 
\begin{align*}
\norm{\psi_j\uvel}_{\SolS{p}} 
&\leq c\big(\norm{\left(\partial_t^2 - \Delta - \lambda\partial_t\Delta\right)\uvel}_p + \norm{\uvel}_p + \norm{\grad\uvel}_p \\ 
&\qquad +\norm{\partial_t\uvel}_p + \norm{\partial_t\grad\uvel}_p + \norm{\TD\uvel}_{\TPD}\big).
\end{align*}
Summing up over $j\in\{1, \ \ldots, \ m\}$ and using standard interpolation, \eqref{est_BD_D} follows.
\end{proof}

The next step is to show that we can drop the term $\norm{\uvel}_p$ on the right-hand side in \eqref{est_BD_D}.

\begin{lem}\label{closed_range_D}
Let $p\in (1, \infty)$ and $\Omega\subset\R^3$ be a bounded domain with boundary of class $C^{1,1}$. There exists a constant $c = c\left(p, \Omega, \per\right) >0$ such that for all $\uvel\in \SolS{p}\left(\torus\times\Omega\right)$ holds
\begin{align}\label{Absch_D}
\norm{\uvel}_{\SolS{p}}\leq c\left(\norm{\left(\partial_t^2 - \Delta - \lambda\partial_t\Delta\right)\uvel}_p + \norm{\TD\uvel}_{\TPD}\right).
\end{align}
\end{lem}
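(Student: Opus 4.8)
The plan is to upgrade the a priori estimate \eqref{est_BD_D} from Lemma \ref{inj_dense_D} to \eqref{Absch_D} by the usual "absorb the lower-order term" trick, i.e.\ a compactness--contradiction argument exploiting that $\calk$ is injective. The one structural ingredient I would secure first is that the embedding $\SolS{p}(\torus\times\Omega)\embeds\LR{p}(\torus\times\Omega)$ is compact. This should follow from an Aubin--Lions--Simon type lemma: any $\uvel\in\SolS{p}(\torus\times\Omega)$ lies in $\LR{p}\bp{\torus;\WSR{2}{p}(\Omega)}$ with $\partial_t\uvel\in\LR{p}\bp{\torus;\LR{p}(\Omega)}\subset\LR{1}\bp{\torus;\LR{p}(\Omega)}$, and since $\Omega$ is bounded, $\WSR{2}{p}(\Omega)\embeds\LR{p}(\Omega)$ is compact by Rellich--Kondrachov; choosing the intermediate space equal to $\LR{p}(\Omega)$, the hypotheses are met and compactness into $\LR{p}\bp{\torus;\LR{p}(\Omega)}=\LR{p}(\torus\times\Omega)$ follows.

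Next I would argue by contradiction. If no constant as in \eqref{Absch_D} exists, pick $\seqN{\uvel_n}\subset\SolS{p}(\torus\times\Omega)$ with $\norm{\uvel_n}_{\SolS{p}}=1$ for all $n$ and $\norm{\bp{\partial_t^2-\Delta-\lambda\partial_t\Delta}\uvel_n}_p+\norm{\TD\uvel_n}_{\TPD}\ra0$. Inserting this into \eqref{est_BD_D} forces $\liminf_{n\ra\infty}\norm{\uvel_n}_p>0$, so the limit will be nonzero. Since $p\in(1,\infty)$, the intersection space $\SolS{p}(\torus\times\Omega)$ is reflexive, so along a subsequence $\uvel_n\wra\uvel$ in $\SolS{p}(\torus\times\Omega)$ (the limit stays in the range of $\oPR$, which is a closed, hence weakly closed, subspace); the compact embedding upgrades this to $\uvel_n\ra\uvel$ strongly in $\LR{p}(\torus\times\Omega)$, whence $\norm{\uvel}_p>0$ and $\uvel\neq0$.

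To close the contradiction I would use that the maps $\uvel\mapsto\bp{\partial_t^2-\Delta-\lambda\partial_t\Delta}\uvel$ and $\TD$ are bounded linear operators on $\SolS{p}(\torus\times\Omega)$, hence weakly continuous; passing to weak limits and combining with the norm convergence to $0$ gives $\bp{\partial_t^2-\Delta-\lambda\partial_t\Delta}\uvel=0$ and $\TD\uvel=0$, that is, $\calk(\uvel)=0$. Injectivity of $\calk$ from Lemma \ref{inj_dense_D} then yields $\uvel=0$, contradicting $\uvel\neq0$, and \eqref{Absch_D} is proved. (Equivalently, one can phrase the whole step as a direct invocation of Peetre's lemma applied to $\calk$.)

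The main obstacle is the compact embedding: one must check the Aubin--Lions--Simon hypotheses against the precise definition of $\SolS{p}$, in particular that the time regularity $\partial_t\uvel\in\LR{p}(\torus;\LR{p}(\Omega))$ together with the spatial compactness $\WSR{2}{p}(\Omega)\embeds\LR{p}(\Omega)$ already suffices. Everything else — reflexivity of the intersection space, weak closedness of $\oPR$'s range, weak continuity of bounded operators — is routine.
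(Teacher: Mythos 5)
Your argument is correct and is essentially the paper's proof: both proceed by contradiction with a unit-norm sequence, invoke the compact embedding $\SolS{p}(\torus\times\Omega)\embeds\LR{p}(\torus\times\Omega)$ together with the a priori estimate \eqref{est_BD_D}, and close via injectivity of $\calk$ from Lemma~\ref{inj_dense_D}. The paper merely arranges the same ingredients in the opposite order (first concludes $\uvel=0$ from injectivity of the weak limit, then uses compactness to force $\norm{\uvel_k}_p\to0$ and derives $1\leq 0$ from \eqref{est_BD_D}), whereas you first show $\norm{\uvel}_p>0$ and then contradict injectivity; the logic and the key lemma are identical.
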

\begin{proof}
If \eqref{Absch_D} does not hold, then we find a sequence $\left(\uvel_k\right)_{k\in\N}\subset \SolS{p}\left(\torus\times\Omega\right)$ such that $\norm{\uvel_k}_{\SolS{p}} = 1$ for all $k\in\N$ and $\norm{\left(\partial_t^2 - \Delta - \lambda\partial_t\Delta\right)\uvel_k}_{p} + \norm{\TD\uvel_k}_{\TPD}\to 0$ as $k\to\infty$. Suppressing the notation of subsequences, we thus have the weak convergence $\uvel_k\rightharpoonup\uvel$ in $\SolS{p}\left(\torus\times\Omega\right)$, and $\uvel$ solves
\begin{align*}
\begin{pdeq}
\partial_t^2\uvel - \Delta\uvel - \lambda\partial_t\Delta\uvel &= 0 && \tin\torus\times\Omega, \\
\uvel &= 0 && \ton\torus\times\partial\Omega.
\end{pdeq}
\end{align*}
By Lemma \ref{inj_dense_D} it follows that $\uvel = 0$. Since the domain $\Omega$ is bounded, the embedding $\SolS{p}\left(\torus\times\Omega\right)\hookrightarrow\LR{p}\left(\torus\times\Omega\right)$ is compact, whence $\norm{\uvel_k}_p\rightarrow 0$ as $k\to\infty$. This yields the contradiction
\begin{align*}
1 = \lim\limits_{k\to\infty}\norm{\uvel_k}_{\SolS{p}}\leq\lim\limits_{k\to\infty}c\left(\norm{\left(\partial_t^2 - \Delta - \lambda\partial_t\Delta\right)\uvel_k}_p + \norm{\uvel_k}_p + \norm{\TD\uvel_k}_{\TPD}\right) = 0.
\end{align*}
Therefore, \eqref{Absch_D} has to hold.
\end{proof}
\begin{lem}\label{BD_reg_D}
Let $p\in (1, \infty)$ and $\Omega\subset\R^3$ be a bounded domain of class $C^{1,1}$. For any $f\in\oPR\LR{p}\left(\torus\times\Omega\right)$ and $g\in\oPR\TPD\left(\torus\times\partial\Omega\right)$ there exists a unique solution $\uvel\in \SolS{p}\left(\torus\times\Omega\right)$ to
\begin{align}\label{Wave_BD_Dirichlet}
\begin{pdeq}
\partial_t^2\uvel - \Delta\uvel - \lambda\partial_t\Delta\uvel &= f && \tin\torus\times\Omega, \\
\uvel &= g && \ton\torus\times\partial\Omega,
\end{pdeq}
\end{align}
and there is a constant $c = c(p, \Omega, \per) > 0$ such that
\begin{align}\label{est_BD_Dirichlet}
\norm{\uvel}_{\SolS{p}}\leq c\left(\norm{f}_p + \norm{g}_{\TPD}\right).
\end{align}
If additionally $f\in\oPR\LR{s}\left(\torus\times\Omega\right)$ and $g\in\oPR\T_D^s\left(\torus\times\partial\Omega\right)$ for some $s\in (1, \infty)$, then also $\uvel\in \SolS{s}\left(\torus\times\Omega\right)$.
\end{lem}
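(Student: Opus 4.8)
The plan is to obtain Lemma~\ref{BD_reg_D} as an immediate consequence of the two preceding lemmas via a closed-range argument. First I would note that estimate \eqref{Absch_D} of Lemma~\ref{closed_range_D}, applied to differences $\uvel_k-\uvel_l$, shows that the operator $\calk$ from Lemma~\ref{inj_dense_D} has closed range: if $\calk(\uvel_k)$ is Cauchy in $\oPR\LR{p}(\torus\times\Omega)\times\oPR\T_D^p(\torus\times\partial\Omega)$, then by \eqref{Absch_D} so is $(\uvel_k)$ in $\SolS{p}(\torus\times\Omega)$, and the limit $\uvel$ satisfies $\calk(\uvel)=\lim\calk(\uvel_k)$. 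Since $\calk$ also has dense range by Lemma~\ref{inj_dense_D}, its range is closed and dense, hence all of the target space, so $\calk$ is surjective; combined with the injectivity established in Lemma~\ref{inj_dense_D} this makes $\calk$ a continuous bijection. Given $f\in\oPR\LR{p}(\torus\times\Omega)$ and $g\in\oPR\TPD(\torus\times\partial\Omega)$, solving $\calk(\uvel)=(f,g)$ produces a solution $\uvel\in\SolS{p}(\torus\times\Omega)$ to \eqref{Wave_BD_Dirichlet}; it is unique because $\calk(\uvel-v)=0$ forces $\uvel=v$ by injectivity (or directly by \eqref{Absch_D}), and applying \eqref{Absch_D} with right-hand side $(f,g)$ yields the estimate \eqref{est_BD_Dirichlet}.

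For the regularity assertion I would run the existence part again with $p$ replaced by $s$: since $f\in\oPR\LR{s}(\torus\times\Omega)$ and $g\in\oPR\T_D^s(\torus\times\partial\Omega)$, the $s$-version of $\calk$ is likewise a bijection, so there is a unique $\uvel_s\in\SolS{s}(\torus\times\Omega)$ solving \eqref{Wave_BD_Dirichlet}. It then remains to identify $\uvel=\uvel_s$. I would argue mode by mode: for each $k\in\frac{2\pi}{\per}\Z\setminus\{0\}$ both $\FT_\torus[\uvel](k,\cdot)$ and $\FT_\torus[\uvel_s](k,\cdot)$ solve the Dirichlet problem \eqref{Helmholtz_Dirichlet} with the same right-hand side $\FT_\torus[f](k,\cdot)\in\LR{p}(\Omega)\cap\LR{s}(\Omega)$, while for $k=0$ both vanish because $\PR\uvel=\PR\uvel_s=0$. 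Using that the solution of \eqref{Helmholtz_Dirichlet} in $\WSR{2}{q}(\Omega)$ is unique and, by standard elliptic regularity, independent of the exponent $q\in(1,\infty)$ whenever the datum lies in the corresponding $\LR{q}(\Omega)$, all Fourier coefficients of $\uvel$ and $\uvel_s$ coincide, whence $\uvel=\uvel_s\in\SolS{s}(\torus\times\Omega)$.

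The bulk of the work is already done in Lemmas~\ref{inj_dense_D} and~\ref{closed_range_D}; the only point that needs a little care is the cross-exponent step in the regularity part, namely making precise that the $\WSR{2}{p}$- and $\WSR{2}{s}$-solutions of the resolvent equation \eqref{Helmholtz_Dirichlet} agree when the datum belongs to $\LR{p}(\Omega)\cap\LR{s}(\Omega)$. This follows from the uniqueness of solutions to \eqref{Helmholtz_Dirichlet} in $\LRloc{1}(\Omega)$ together with elliptic bootstrapping, but it should be invoked explicitly rather than left implicit. Everything else is a routine application of the open mapping theorem to the bijection $\calk$.
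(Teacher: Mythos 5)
Your argument for the existence, uniqueness, and estimate is essentially the paper's proof: Lemma~\ref{inj_dense_D} gives injectivity and dense range, Lemma~\ref{closed_range_D} gives the a priori estimate which (by the standard Cauchy-sequence argument you spell out) makes the range closed, hence $\calk$ is a bijection and therefore a homeomorphism, and \eqref{est_BD_Dirichlet} is exactly \eqref{Absch_D}.

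For the regularity assertion the paper takes a shorter route than you do: since $\Omega$ is bounded, $f\in\LR{p}\cap\LR{s}$ and $g\in\T_D^p\cap\T_D^s$ automatically lie in the spaces with exponent $\min\{s,p\}$, and the two solutions $\uvel\in\SolS{p}$, $\uvel_s\in\SolS{s}$ both lie in $\SolS{\min\{s,p\}}$ and solve the same problem; uniqueness of the solution in $\SolS{\min\{s,p\}}$ then gives $\uvel=\uvel_s$ at once. Your mode-by-mode identification is in the same spirit but has a small gap: \eqref{Helmholtz_Dirichlet} as stated carries the \emph{homogeneous} boundary condition $v=0$, whereas the Fourier coefficients of $\uvel$ must satisfy the inhomogeneous Dirichlet condition $\FT_\torus[g](k,\cdot)$ on $\partial\Omega$ when $g\neq 0$. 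You would first need to lift $g$ (as in the density part of Lemma~\ref{inj_dense_D}) or state the mode-wise problem with inhomogeneous trace before the Helmholtz uniqueness argument applies; the paper's $\min\{s,p\}$ trick sidesteps this entirely by never touching the Fourier decomposition.
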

\begin{proof}
The operator $\calk$ in Lemma \ref{inj_dense_D} is injective and has a dense range. By Lemma \ref{closed_range_D}, the range is also closed. Hence, $\calk$ is an isomorphism. The unique solvability of \eqref{Wave_BD_Dirichlet} as well as the estimate \eqref{est_BD_Dirichlet} follows. The regularity assertion follows immediately from the unique solvability of \eqref{Wave_BD_Dirichlet} in $\SolS{\min\{s, p\}}\left(\torus\times\Omega\right)$.
\end{proof}

\subsection{Neumann Boundary Condition}\label{Neumann}

We now consider the corresponding Neumann problems in a half-space and a bounded domain.

\subsubsection{The Half-Space}
We first consider the half-space case 
\begin{align}\label{damp_wave_Neumann}
\begin{pdeq}
\partial_{t}^2\uvel-\Delta\uvel - \lambda\partial_t\Delta\uvel &= f && \tin\torus\times\R_+^3, \\
\pdn\uvel &= g && \ton\torus\times\partial\R_+^3. 
\end{pdeq}
\end{align}

\begin{lem}\label{peri_Neumann}
Let $p\in\left(1, \infty\right)$. For any $f\in\oPR\LR{p}\left(\torus\times\R_+^3\right)$ and $g\in\oPR\TPN\left(\torus\times\partial\R_+^3\right)$ there exists a unique solution $\uvel\in \SolS{p}\left(\torus\times\R_+^3\right)$ to \eqref{damp_wave_Neumann} and there is a constant $c = c(p, \per) > 0$ such that
\begin{align}\label{reg_per_Neumann}
\norm{\uvel}_{\SolS{p}}\leq c\left(\norm{f}_p + \norm{g}_{\TPN}\right).
\end{align}
If additionally $f\in\oPR\LR{s}\left(\torus\times\R_+^3\right)$ and $g\in\oPR T^s_N\left(\torus\times\partial\R_+^3\right)$ for some $s\in\left( 1, \infty\right)$, then also $\uvel\in\SolS{s}\left(\torus\times\R_+^3\right)$.
\end{lem}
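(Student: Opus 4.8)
The argument runs parallel to the proof of Lemma~\ref{peri_Dirichlet}, the only structural change being that the odd reflection across $\{x_3=0\}$ is replaced by an \emph{even} one: a function even in $x_3$ has vanishing normal derivative on $\torus\times\partial\R_+^3$, which is exactly the homogeneous Neumann condition. So I would first treat $g=0$. Given $f\in\oPR\LR{p}(\torus\times\R_+^3)$, set
\[
\tilde{f}(t,x) := \begin{pdeq} &f(t,x^\prime,x_3) && \text{ if } x_3 \geq 0, \\ &f(t,x^\prime,-x_3) && \text{ if } x_3 < 0, \end{pdeq}
\]
with $x^\prime := (x_1,x_2)$, so that $\tilde{f}\in\oPR\LR{p}(\torus\times\R^3)$ is even in $x_3$. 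Lemma~\ref{peri_whole} yields $\tilde{\uvel}\in\SolS{p}(\torus\times\R^3)$ solving $\partial_t^2\tilde{\uvel}-\Delta\tilde{\uvel}-\lambda\partial_t\Delta\tilde{\uvel}=\tilde{f}$ on $\torus\times\R^3$ with $\norm{\tilde{\uvel}}_{\SolS{p}}\leq c\norm{f}_p$. Since the operator $\partial_t^2-\Delta-\lambda\partial_t\Delta$ commutes with $x_3\mapsto -x_3$ and $\tilde{f}$ is even in $x_3$, the function $v(t,x):=\tilde{\uvel}(t,x^\prime,-x_3)$ solves the same whole-space equation; the uniqueness part of Lemma~\ref{peri_whole} (in $\TDRper(\grp)$) then forces $\tilde{\uvel}(t,x^\prime,x_3)=\tilde{\uvel}(t,x^\prime,-x_3)$. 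Differentiating this identity in $x_3$ and restricting to $\torus\times\{x_3=0\}$ — admissible because $\partial_3\tilde{\uvel}\in\WSRper{1}{p}(\R;\WSR{1}{p}(\R^3))$ has a well-defined boundary trace — shows that this trace equals its own negative, hence vanishes, i.e. $\pdn{\uvel}=0$ on $\torus\times\partial\R_+^3$ for $\uvel:=\tilde{\uvel}_{|\torus\times\R_+^3}$. Thus $\uvel\in\SolS{p}(\torus\times\R_+^3)$ solves \eqref{damp_wave_Neumann} with $g=0$ and satisfies \eqref{reg_per_Neumann}. The passage to general $g\in\oPR\TPN(\torus\times\partial\R_+^3)$ is the standard lifting argument: as $\TN:\SolS{p}(\torus\times\R_+^3)\to\oPR\TPN(\torus\times\partial\R_+^3)$ is continuous and surjective, choose $G$ with $\TN G=g$ and apply the homogeneous case to $f-(\partial_t^2-\Delta-\lambda\partial_t\Delta)G$.

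For uniqueness I would repeat, mutatis mutandis, the duality argument of Lemma~\ref{peri_Dirichlet}. If $\uvel\in\SolS{p}(\torus\times\R_+^3)$ solves \eqref{damp_wave_Neumann} with $f=0$, $g=0$, then for arbitrary $\psi\in\oPR\LR{p^\prime}(\torus\times\R_+^3)$ the existence statement with exponent $p^\prime$, applied to a suitable time-reflected datum, produces $\phi\in\SolS{p^\prime}(\torus\times\R_+^3)$ with $\pdn{\phi}=0$ on $\torus\times\partial\R_+^3$; integrating by parts twice in time and space gives $\frac{1}{\per}\int_0^\per\int_{\R_+^3}\uvel\psi\,\dx\dt=0$. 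The boundary contributions from the $-\Delta$ and $-\lambda\partial_t\Delta$ terms vanish precisely because $\pdn{\uvel}=\pdn{\phi}=0$ on $\torus\times\partial\R_+^3$ (here the Neumann condition takes the place of the Dirichlet one used before). Since $\psi$ is arbitrary, $\uvel=0$.

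Finally, for the additional integrability: if also $f\in\oPR\LR{s}(\torus\times\R_+^3)$ for some $s\in(1,\infty)$ and $g=0$, running the reflection with exponent $s$ produces $\tilde{U}\in\SolS{s}(\torus\times\R^3)$ which, by the $\TDRper(\grp)$-uniqueness in Lemma~\ref{peri_whole}, coincides with $\tilde{\uvel}$; hence $\uvel\in\SolS{s}(\torus\times\R_+^3)$, and the inhomogeneous case follows again by lifting. I do not expect a genuine obstacle: the content is essentially a transcription of the Dirichlet proof. The only point requiring care is the bookkeeping of the reflection parity — checking that the \emph{even} extension is the one reproducing the Neumann condition, both for the solution and for the test function in the duality pairing, so that all boundary terms cancel.
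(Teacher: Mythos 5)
Your proposal is correct and follows exactly the same route the paper takes: the paper's proof of this lemma is literally one sentence stating that existence follows as in the Dirichlet case with even instead of odd reflection, and that uniqueness follows as in Lemma~\ref{peri_Dirichlet}. You have simply spelled out the details — evenness of $\tilde{\uvel}$ via whole-space uniqueness, vanishing of the odd function $\partial_3\tilde{\uvel}$ at $x_3=0$, lifting for inhomogeneous $g$, the duality argument, and the $\TDRper(\grp)$-uniqueness for the extra-regularity claim — all of which match the paper's intent.
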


\begin{proof}
Existence of a solution ${{\uvel}\in \SolS{p}\left(\torus\times\R_+^3\right)}$ to \eqref{damp_wave_Neumann} satisfying \eqref{reg_per_Neumann} follows as in the case of Dirichlet boundary values by using even instead of odd reflection in combination with Lemma \ref{peri_whole}.
Uniqueness of the solution in the space $\SolS{p}\left(\torus\times\R_+^3\right)$ follows as in Lemma \ref{peri_Dirichlet}. 
\end{proof}

\subsubsection{The Bent Half-Space}

Next, we study the Neumann problem in the bent half-space $\torus\times\R^3_\omega$. Here, $\omega$ is defined as in section \ref{Sec_Bent_D}.

\begin{lem}\label{Bent_Neumann}
Let $p\in (1, \infty)$ and $\omega\in C^{1,1}(\R^2)$. Then there is a constant $\delta = \delta(p) >0$ with the following property: If $\norm{\grad\omega}_\infty, \norm{\grad^2\omega}_\infty < \delta$, then for any ${f\in\oPR\LR{p}\left(\torus\times\R^3_\omega\right)}$ and $g\in\oPR\TPN\left(\torus\times\partial\R^3_\omega\right)$ there exists a unique solution ${\uvel\in\SolS{p}\left(\torus\times\R^3_\omega\right)}$ to
\begin{align}\label{Bent_N}
\begin{pdeq}
\partial_{t}^2\uvel - \Delta\uvel - \lambda\partial_t\Delta\uvel &= f && \tin\torus\times\R^3_\omega, \\
\pdn\uvel &= g && \ton\torus\times\partial\R^3_\omega,
\end{pdeq}
\end{align}
which satisfies
\begin{align}\label{est_Bent_N}
\norm{\uvel}_{\SolS{p}}\leq c\left(\norm{f}_p + \norm{g}_{\TPN}\right),
\end{align}
where $c = c\left(p, \omega, \per\right) > 0$. 
If additionally $\norm{\grad\omega}_\infty, \norm{\grad^2\omega}_\infty < \min\{\delta(p), \delta(s)\}$ and $f\in \oPR\LR{s}\left(\torus\times\R^3_\omega\right)$ and $g\in\oPR T^s_N\left(\torus\times\partial\R^3_\omega\right)$ for some $s\in (1, \infty)$, then $\uvel\in \SolS{s}(\torus\times\R^3_\omega)$.
\end{lem}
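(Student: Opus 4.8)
The plan is to follow the proof of Lemma~\ref{Bent_Dirichlet}: flatten $\partial\R^3_\omega$ with the diffeomorphism $\phi_\omega$ from \eqref{Glatt} and treat \eqref{Bent_N} as a small perturbation of the half-space Neumann problem \eqref{damp_wave_Neumann}. With $\Phi[\uvel](t,\tilde x):=\tilde\uvel(t,\tilde x):=\uvel(t,\phi_\omega^{-1}(\tilde x))$ as before, the interior equation transforms exactly as in the Dirichlet case, so that $(\partial_t^2-\Delta-\lambda\partial_t\Delta)\uvel$ becomes $(\partial_t^2-\Delta-\lambda\partial_t\Delta+\tilde R)\tilde\uvel$ with $\tilde R$ given by \eqref{Rtilde} and bounded as in \eqref{Bent_DirichletPerturbation}. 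The feature absent in the Dirichlet setting is that the outward unit normal on $\partial\R^3_\omega$ equals $(1+\snorm{\grad\omega}^2)^{-1/2}(\grad\omega,-1)$ in the local coordinates, so that the chain rule $\partial_j\uvel=\partial_j\tilde\uvel-(\partial_j\omega)\partial_3\tilde\uvel$, $j\in\{1,2\}$, together with a rescaling of the boundary equation by $(1+\snorm{\grad\omega}^2)^{1/2}$, turns the boundary condition $\pdn\uvel=g$ into $\TN\tilde\uvel+\tilde S\tilde\uvel=\tilde g$ on $\torus\times\partial\R^3_+$, where
\[
\tilde S\tilde\uvel:=\bp{\grad\omega,0}\cdot\grad\tilde\uvel-\snorm{\grad\omega}^2\partial_3\tilde\uvel
\]
is a first-order boundary operator whose coefficients vanish when $\grad\omega=0$, and $\tilde g:=(1+\snorm{\grad\omega}^2)^{1/2}\,(g\circ\phi_\omega^{-1})$. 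Thus both the interior and the boundary operator are perturbed, but only by lower-order terms carrying small coefficients.

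By Lemma~\ref{peri_Neumann}, the operator
\[
\tilde\calk:\SolS{p}(\torus\times\R^3_+)\ra\oPR\LR{p}(\torus\times\R^3_+)\times\oPR\TPN(\torus\times\partial\R^3_+),\quad\tilde\calk(\tilde\uvel):=\bp{\partial_t^2\tilde\uvel-\Delta\tilde\uvel-\lambda\partial_t\Delta\tilde\uvel,\TN\tilde\uvel},
\]
is a homeomorphism, and the flattened problem corresponds to $\overline\calk(\tilde\uvel):=\tilde\calk(\tilde\uvel)+(\tilde R\tilde\uvel,\tilde S\tilde\uvel)$. Using the continuity and surjectivity of the trace operators from Section~\ref{FunktionSpacesSection}, and the fact that multiplication by the Lipschitz functions $\grad\omega$ and $\snorm{\grad\omega}^2$ is bounded on the Sobolev--Slobodecki\u{\i} spaces \eqref{DefOfTraceSpaces}, one obtains $\norm{\tilde S\tilde\uvel}_{\TPN}\leq c\,\delta\,\norm{\tilde\uvel}_{\SolS{p}}$, in analogy with \eqref{Bent_DirichletPerturbation}. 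Hence $(\tilde R,\tilde S)$ has small operator norm for small $\delta$, so that $\overline\calk$ is again a homeomorphism once $\delta=\delta(p)$ is chosen small enough (standard perturbation argument). Since $\norm{\grad\omega}_\infty,\norm{\grad^2\omega}_\infty<\infty$, the map $\Phi$ (incorporating the factor $(1+\snorm{\grad\omega}^2)^{1/2}$ on the boundary data) is a homeomorphism from $\oPR\LR{p}(\torus\times\R^3_\omega)$ onto $\oPR\LR{p}(\torus\times\R^3_+)$, from $\SolS{p}(\torus\times\R^3_\omega)$ onto $\SolS{p}(\torus\times\R^3_+)$, and from $\oPR\TPN(\torus\times\partial\R^3_\omega)$ onto $\oPR\TPN(\torus\times\partial\R^3_+)$; combining this with the two transformation identities shows that
\[
\calk:\SolS{p}(\torus\times\R^3_\omega)\ra\oPR\LR{p}(\torus\times\R^3_\omega)\times\oPR\TPN(\torus\times\partial\R^3_\omega),\quad\calk(\uvel):=\bp{\partial_t^2\uvel-\Delta\uvel-\lambda\partial_t\Delta\uvel,\TN\uvel},
\]
is a homeomorphism, from which the unique solvability of \eqref{Bent_N} and the estimate \eqref{est_Bent_N} follow. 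The regularity assertion is then obtained by carrying out the same argument in the intersection space $\SolS{p}\cap\SolS{s}$ with $\delta<\min\{\delta(p),\delta(s)\}$, exactly as in Lemma~\ref{Bent_Dirichlet}.

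The main obstacle is the new boundary perturbation $\tilde S$. Whereas in the Dirichlet case the flattening leaves the boundary operator untouched, here one must verify that $\tilde S\tilde\uvel$ really lies in $\oPR\TPN(\torus\times\partial\R^3_+)$ with norm controlled by $c\,\delta\,\norm{\tilde\uvel}_{\SolS{p}}$; concretely this means showing that a tangential spatial derivative of the Dirichlet trace of an $\SolS{p}$-function, as well as the Neumann trace multiplied by a Lipschitz function of $x^\prime$, belong to the anisotropic trace space $\TPN$ of \eqref{DefOfTraceSpaces}. This is a matter of the mapping properties of the trace operators together with some careful bookkeeping of the time--space anisotropy, but is otherwise routine; the remainder of the proof is a perturbation and transfer argument parallel to that of Lemma~\ref{Bent_Dirichlet}.
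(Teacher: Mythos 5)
Your proposal is correct and follows essentially the same route as the paper: flatten $\partial\R^3_\omega$ with $\phi_\omega$, transform the Neumann trace, bound the resulting first-order boundary perturbation $\tilde S$ in $\TPN$ by a constant times $\delta$, and conclude by a Neumann-variant perturbation argument together with the homeomorphism property of $\Phi$. The only cosmetic difference is that the paper keeps the Jacobian factor $\snorm{\operatorname{cof}\grad\phi_\omega\cdot n\circ\phi_\omega^{-1}}=(1+\snorm{\grad\omega}^2)^{-1/2}$ explicitly multiplying $\TN\tilde\uvel$, whereas you absorb it into the boundary datum by setting $\tilde g=(1+\snorm{\grad\omega}^2)^{1/2}(g\circ\phi_\omega^{-1})$; after this rescaling your $\tilde S$ coincides with the paper's.
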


\begin{proof}
Let $\phi_\omega$ be as in \eqref{Glatt} and $\Phi$ be the lifting operator $\Phi[\uvel](t, \tilde{x}) := \tilde{\uvel}(t, \tilde{x}) := \uvel(t, \phi_\omega^{-1}(\tilde{x}))$. Then $\Phi$ is a homeomorphism $\Phi\colon\projcompl\TPN\left(\torus\times\partial\R^3_{\omega}\right)\to \projcompl\TPN\left(\torus\times\partial\R^3_{+}\right)$ with 
\begin{align}
\begin{aligned}
\Phi\left[\TN\uvel\right] &= \left(\grad\uvel\circ\phi_\omega^{-1}\cdot n\circ\phi_\omega^{-1}\right)\\
&= \left(\grad\left(\uvel\circ\phi_\omega^{-1}\right)\grad\phi_\omega \snorm{\operatorname{cof}\grad\phi_\omega \cdot n\circ\phi_\omega^{-1}}\left(\operatorname{cof}\grad\phi_\omega\right)^{-1} \tilde{n}\right) \\
&= \snorm{\operatorname{cof}\grad\phi_\omega \cdot n\circ\phi_\omega^{-1}} \left(\grad\tilde{\uvel}\grad\phi_\omega\left(\grad\phi_\omega\right)^\top\tilde{n} \right) \\
&= \snorm{\operatorname{cof}\grad\phi_\omega \cdot n\circ\phi_\omega^{-1}} \left( \grad\tilde{\uvel}\cdot\tilde{n} + \grad\tilde{\uvel}\left(\grad\phi_\omega\left(\grad\phi_\omega\right)^\top - I\right)\tilde{n}\right) \\
&= \snorm{\operatorname{cof}\grad\phi_\omega \cdot n\circ\phi_\omega^{-1}}\TN\tilde{\uvel} + \TD\tilde{S}\tilde{\uvel},
\end{aligned}
\end{align}
with
\begin{align*}
\tilde{S}\tilde{\uvel} := \snorm{\operatorname{cof}\grad\phi_\omega \cdot n\circ\phi_\omega^{-1}}\grad\tilde{\uvel}\left(\grad\phi_\omega\left(\grad\phi_\omega\right)^\top - I\right)\tilde{n}.
\end{align*}
Here, $\tilde{n}$ denotes the external unit normal vector on $\torus\times\R^3_+$ and $n$ the external unit normal vector on $\torus\times\R^3_\omega$. It should be understood that $\TN\uvel$ denotes the Neumann trace operator in $\torus\times\R^3_\omega$ and $\TN\tilde{\uvel}$ the Neumann trace operator in $\torus\times\R^3_+$. Due to \eqref{reg_per_Neumann}, we can estimate
\begin{align}\label{Bent_NeumannPerturbation}
\begin{aligned}
\norm{\tilde{S}\tilde{\uvel}}_{\TPN} &\leq c\norm{\operatorname{cof}\grad\phi_\omega \cdot n\circ\phi_\omega^{-1}}_{\WSR{1}{\infty}\left(\partial\R_+^3\right)}\norm{\grad\tilde{\uvel}\left(\grad\phi_\omega\left(\grad\phi_\omega\right)^\top - I\right)\tilde{n}}_{\TPN} \\
&\leq c(1+\delta)\norm{\grad\tilde{\uvel}}_{\TPN}\cdot\norm{\left(\grad\phi_\omega\left(\grad\phi_\omega\right)^\top - I\right)\tilde{n}}_{\WSR{1}{\infty}\left(\partial\R_+^3\right)} \\
&\leq c\delta\left(1+\delta\right)^2\norm{\tilde{\uvel}}_{\SolS{p}} \leq c\delta\left(1+\delta\right)^2\bp{\norm{\left(\partial_t^2 - \Delta - \lambda\partial_t\Delta\right)\tilde{\uvel}}_{p}+\norm{\TN\tilde{\uvel}}_{\T_N^p}}.
\end{aligned}
\end{align}
Lemma \ref{peri_Neumann} implies that
\begin{align*}
&\calk_+\colon\SolS{p}\left(\torus\times\R_+^3\right)\ra\oPR\LR{p}\left(\torus\times\R_+^3\right)\times\oPR\TPN\left(\torus\times\partial\R^3_+\right), \\
&\calk_+(\tilde{\uvel}) := \left(\partial_t^2\tilde{\uvel} - \Delta\tilde{\uvel} - \lambda\partial_t\Delta\tilde{\uvel}, \TN\tilde{\uvel}\right)
\end{align*}
is a homeomorphism. The operator 
\begin{align*}
&\calk\colon\SolS{p}\left(\torus\times\R_+^3\right)\ra\oPR\LR{p}\left(\torus\times\R_+^3\right)\times\oPR\TPN\left(\torus\times\partial\R^3_+\right), \\
&\calk(\tilde{\uvel}) := \calk_+(\tilde{\uvel}) + (\tilde{R}\tilde{\uvel}, -\tilde{S}\tilde{\uvel}),
\end{align*}
with $\tilde{R}$ defined as in \eqref{Rtilde}, is a perturbation of $\calk_+$ for sufficiently small $\delta$
by \eqref{Bent_DirichletPerturbation} and \eqref{Bent_NeumannPerturbation}. Consequently, 
$\calk$ is also a homeomorphism sufficiently small $\delta$.

By a direct computation, we observe that the operator on the left-hand side of \eqref{Bent_N} can be expressed as $\Phi^{-1}\circ\calk\circ\Phi$, and thus it is a homeomorphism. Hence, the existence of a solution $\uvel\in\SolS{p}\left(\torus\times\R^3_\omega\right)$ satisfying \eqref{est_Bent_N} follows. 

Now assume in addition $f\in \oPR\LR{s}\left(\torus\times\R^3_\omega\right)$ and $g\in\oPR T^s_N\left(\torus\times\partial\R^3_\omega\right)$ for some $s\in (1, \infty)$. We obtain $\uvel\in\SolS{s}\left(\torus\times\R^3_\omega\right)$ if we consider intersection spaces $\SolS{p}\cap\SolS{s}$ instead of $\SolS{p}$ in the argument above.
\end{proof}

\subsubsection{Bounded Domains}
Finally, we investigate the Neumann problem for a bounded domain. 

\begin{lem}\label{inj_dense_N}
Let $\Omega\subset\R^3$ be a bounded domain with boundary of class $C^{1,1}$ and let $p\in (1, \infty)$. The operator 
\begin{align*}
&\calk:\SolS{p}(\torus\times\Omega)\ra\oPR\LR{p}(\torus\times\Omega)\times\oPR\T_N^p(\torus\times\partial\Omega),\\
&\calk(\uvel):= \bp{\partial_t^2\uvel - \Delta\uvel - \lambda\partial_t\Delta\uvel,\TN\uvel}
\end{align*}
is injective and has a dense range. Moreover, there exists a constant $c = c\left(p, \Omega, \per\right) > 0$ such that for all $\uvel\in\SolS{p}\left(\torus\times\Omega\right)$ holds
\begin{align}\label{est_BD_N}
\norm{\uvel}_{\SolS{p}}\leq c\left(\norm{\left(\partial_t^2 - \Delta - \lambda\partial_t\Delta\right)\uvel}_p + \norm{\uvel}_p + \norm{\TN\uvel}_{\TPN}\right).
\end{align}
\end{lem}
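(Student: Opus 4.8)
The plan is to mirror the proof of Lemma~\ref{inj_dense_D}, replacing the Dirichlet boundary condition by the Neumann one at each step; the algebraic structure is unchanged, and the only genuinely new feature appears in the localization estimate \eqref{est_BD_N}.

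\emph{Injectivity and density of the range.} Suppose $\uvel\in\SolS{p}(\torus\times\Omega)$ satisfies $\calk(\uvel)=0$ and expand $\uvel$ in its temporal Fourier series. For each $k\in\frac{2\pi}{\per}\Z\setminus\{0\}$ the coefficient $\FT_\torus\nb{\uvel}(k,\cdot)\in\WSR{2}{p}(\Omega)$ solves the resolvent-type problem
\begin{align*}
-k^2 v-(1+ik\lambda)\Delta v=0 \tin\Omega,\qquad \pdn{v}=0 \ton\partial\Omega.
\end{align*}
Dividing by $-(1+ik\lambda)$ this reads $(-\Delta-\mu_k)v=0$ with $\mu_k:=\tfrac{k^2}{1+ik\lambda}$, and since $\lambda>0$ we have $\impart\mu_k=-\tfrac{k^3\lambda}{1+k^2\lambda^2}\neq0$, so $\mu_k$ lies off the spectrum $[0,\infty)$ of the self-adjoint Neumann Laplacian; standard elliptic $\LR{p}$ theory on $C^{1,1}$ domains then yields $v=0$. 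As $\PR\uvel=0$ also kills the $k=0$ mode, $\uvel=0$ and $\calk$ is injective. The same unique solvability of the resolvent problem for $k\neq0$ gives density of the range: for $(f,g)$ in the target space I would pick $G\in\SolS{p}(\torus\times\Omega)$ with $\TN G=g$ (the Neumann trace is continuous and surjective), note $f-(\partial_t^2-\Delta-\lambda\partial_t\Delta)G\in\oPR\LR{p}(\torus\times\Omega)$, approximate it in $\LR{p}$ by finite sums of terms $h\e^{ikt}$ with $k\neq0$, and solve $\calk(\cdot)=(h\e^{ikt},0)$ by $v_k\e^{ikt}$ where $v_k\in\WSR{2}{p}(\Omega)$ solves $-k^2 v_k-(1+ik\lambda)\Delta v_k=h$, $\pdn{v_k}=0$; adding $G$ back puts $(f,g)$ in the closure of the range.

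\emph{The estimate \eqref{est_BD_N}.} This I would obtain by the localization scheme of Lemma~\ref{inj_dense_D}: cover $\overline\Omega$ by finitely many small balls $B_j$, interior ones of ``type $\R^3$'' and boundary ones of ``type $\R^3_{\omega_j}$'' with $\omega_j$ meeting the smallness hypothesis of Lemma~\ref{Bent_Neumann}, together with a subordinate partition of unity $\{\psi_j\}$. For each $j$ the function $\psi_j\uvel$ solves the damped wave equation with right-hand side $f_j$ built from lower-order derivatives of $\uvel$ exactly as in \eqref{Ungl_Kugel}; the genuinely new point is that the Neumann datum for $\psi_j\uvel$ is
\begin{align*}
\pdn{(\psi_j\uvel)}=\psi_j\,\TN\uvel+\bp{\pdn{\psi_j}}\TD\uvel \quad\ton\torus\times\partial\Omega,
\end{align*}
so that, besides $\norm{\TN\uvel}_{\TPN}$, the Dirichlet trace $\TD\uvel$ enters. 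Applying Lemma~\ref{peri_whole} on the interior balls and Lemma~\ref{Bent_Neumann} on the boundary balls and summing over $j$ bounds $\norm{\uvel}_{\SolS{p}}$ by $\norm{(\partial_t^2-\Delta-\lambda\partial_t\Delta)\uvel}_p+\norm{\TN\uvel}_{\TPN}$ plus the lower-order terms $\norm{\uvel}_p$, $\norm{\grad\uvel}_p$, $\norm{\partial_t\uvel}_p$, $\norm{\partial_t\grad\uvel}_p$ and $\norm{\TD\uvel}_{\TPN}$. Since $\Omega$ is bounded, the trace of $\uvel\in\SolS{p}(\torus\times\Omega)$ lies in $\TPD(\torus\times\partial\Omega)$, which is strictly more regular than $\TPN(\torus\times\partial\Omega)$ in both the time and the space variable; interpolating between $\SolS{p}$ and $\LR{p}(\torus\times\Omega)$ therefore gives $\norm{\TD\uvel}_{\TPN}\leq\epsilon\norm{\uvel}_{\SolS{p}}+C_\epsilon\norm{\uvel}_p$, and the remaining lower-order derivative norms are controlled the same way. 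Absorbing the $\epsilon$-terms into the left-hand side yields \eqref{est_BD_N}.

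I expect the localization estimate to be the main obstacle, and within it the treatment of the extra term $\bp{\pdn{\psi_j}}\TD\uvel$, which has no counterpart in the Dirichlet case: one must check that it is of strictly lower order than the $\TPN$-norm, so that it can be absorbed, rather than of the same order, in which case the argument would collapse. Injectivity and density are routine once one observes that the damping term $ik\lambda$ pushes the resolvent parameter $\mu_k$ off the positive real axis for every $k\neq0$.
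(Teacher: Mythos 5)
Your proposal is correct and follows the paper's approach exactly: Fourier decomposition plus the complex resolvent parameter off $[0,\infty)$ for injectivity, trigonometric-polynomial approximation for density, and localization via the whole-space and bent-half-space lemmas for the estimate. The paper's proof is terse (it merely cites ``proceeding as in the proof of Lemma~\ref{inj_dense_D}''), and you have correctly flagged the one genuine novelty that the paper glosses over: the Neumann trace of $\psi_j\uvel$ carries the additional term $\bp{\pdn{\psi_j}}\TD\uvel$, which has no Dirichlet analogue. Your resolution is sound: since $\TD$ maps $\SolS{p}$ boundedly into $\TPD$, which exceeds $\TPN$ by a full order in the spatial component and by $1/2$ in the temporal component, and since both $\torus$ and $\partial\Omega$ are compact, an Aubin--Lions/Ehrling-type interpolation gives $\norm{\TD\uvel}_{\TPN}\leq\epsilon\norm{\uvel}_{\SolS{p}}+C_\epsilon\norm{\uvel}_p$, so the term is indeed of strictly lower order and absorbs exactly as the other lower-order terms do.
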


\begin{proof}
Like in the case of Dirichlet boundary value problem, we consider for $k\in\frac{2\pi}{\per}\Z\setminus\{0\}$ the Helmholtz equation
\begin{align}\label{Helmholtz_Neumann}
\begin{pdeq}
-k^2v-\left( 1 + ik\lambda\right)\Delta v &= h && \tin\Omega, \\
\pdn v &= 0 && \ton\partial\Omega. 
\end{pdeq}
\end{align}
Standard theory for elliptic equations yields for every $h\in\LR{p}\left(\Omega\right)$ a unique solution $v\in\WSR{2}{p}\left(\Omega\right)$ to \eqref{Helmholtz_Neumann}. The injectivity of $\calk$ and the density of its range follow as in the proof 
of Lemma \ref{inj_dense_D}. Proceeding as in the proof of Lemma \ref{inj_dense_D}, \eqref{est_BD_N} follows.
\end{proof}

\begin{lem}\label{BD_reg_N}
Let $\Omega\subset\R^3$ be a bounded domain with boundary of class $C^{1,1}$ and let $p\in (1,\infty)$. For any $f\in\oPR\LR{p}\left(\torus\times\Omega\right)$ and $g\in\oPR\TPN\left(\torus\times\partial\Omega\right)$ there exists a unique solution $\uvel\in\SolS{p}\left(\torus\times\Omega\right)$ to
\begin{align}\label{Wave_BD_Neumann}
\begin{pdeq}
\partial_{t}^2\uvel - \Delta\uvel - \lambda\partial_t\Delta\uvel &= f && \tin\torus\times\Omega, \\
\pdn\uvel &= g && \ton\torus\times\partial\Omega,
\end{pdeq}
\end{align}
and there is a constant $c = c\left(p, \Omega, \per\right)$ such that the estimate
\begin{align}\label{est_BD_Neumann}
\norm{\uvel}_{\SolS{p}}\leq c\left(\norm{f}_p + \norm{g}_{\TPN}\right)
\end{align}
holds. If additionally $f\in\oPR\LR{s}\left(\torus\times\Omega\right)$ and $g\in\oPR T^s_N\left(\torus\times\partial\Omega\right)$ for some $s\in (1, \infty)$, then also $\uvel\in\SolS{s}\left(\torus\times\Omega\right)$.
\end{lem}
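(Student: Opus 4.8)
The plan is to mirror the three-step treatment of the Dirichlet problem carried out in Lemmas~\ref{inj_dense_D}--\ref{BD_reg_D}. Lemma~\ref{inj_dense_N} already shows that the operator $\calk$ is injective, has dense range, and obeys the a priori bound \eqref{est_BD_N}, which still carries the superfluous lower-order term $\norm{\uvel}_p$ on its right-hand side. The only missing ingredient is the Neumann counterpart of Lemma~\ref{closed_range_D}, namely the estimate
\begin{align}\label{Absch_N}
\norm{\uvel}_{\SolS{p}}\leq c\left(\norm{\left(\partial_t^2 - \Delta - \lambda\partial_t\Delta\right)\uvel}_p + \norm{\TN\uvel}_{\TPN}\right)
\end{align}
for all $\uvel\in\SolS{p}(\torus\times\Omega)$, with $c=c(p,\Omega,\per)>0$. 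Granting \eqref{Absch_N}, the range of $\calk$ is closed; being also dense, it is all of $\oPR\LR{p}(\torus\times\Omega)\times\oPR\TPN(\torus\times\partial\Omega)$, and injectivity together with \eqref{Absch_N} makes $\calk^{-1}$ bounded. Hence $\calk$ is a Banach-space isomorphism, which is exactly the assertion that \eqref{Wave_BD_Neumann} is uniquely solvable in $\SolS{p}(\torus\times\Omega)$ with the estimate \eqref{est_BD_Neumann}. (No compatibility condition enters here because the data lies in the purely periodic subspaces, so only the modes $k\in\frac{2\pi}{\per}\Z\setminus\{0\}$ are involved, and for those the Helmholtz--Neumann operator of Lemma~\ref{inj_dense_N} has trivial kernel.)

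To establish \eqref{Absch_N} I would run the contradiction-and-compactness argument from the proof of Lemma~\ref{closed_range_D} verbatim, with $\TD$ replaced by $\TN$. If \eqref{Absch_N} failed, pick $(\uvel_k)_{k\in\N}\subset\SolS{p}(\torus\times\Omega)$ with $\norm{\uvel_k}_{\SolS{p}}=1$ and $\norm{(\partial_t^2-\Delta-\lambda\partial_t\Delta)\uvel_k}_p+\norm{\TN\uvel_k}_{\TPN}\to0$. A subsequence converges weakly, $\uvel_k\rightharpoonup\uvel$ in $\SolS{p}(\torus\times\Omega)$; since the differential operator and the Neumann trace $\TN$ are bounded, hence weak-to-weak continuous on $\SolS{p}(\torus\times\Omega)$, the limit $\uvel$ solves the homogeneous Neumann problem, so $\uvel=0$ by the injectivity part of Lemma~\ref{inj_dense_N}. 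Boundedness of $\Omega$ gives the compact embedding $\SolS{p}(\torus\times\Omega)\hookrightarrow\LR{p}(\torus\times\Omega)$, whence $\norm{\uvel_k}_p\to0$, and \eqref{est_BD_N} then yields $1=\lim_k\norm{\uvel_k}_{\SolS{p}}\leq\lim_k c\bp{\norm{(\partial_t^2-\Delta-\lambda\partial_t\Delta)\uvel_k}_p+\norm{\uvel_k}_p+\norm{\TN\uvel_k}_{\TPN}}=0$, a contradiction.

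For the regularity claim I would set $q:=\min\{p,s\}$ and note that, since $\torus\times\Omega$ has finite measure, $f\in\oPR\LR{s}$ and $g\in\oPR T^s_N$ place the data (and the solution just constructed) into the exponent-$q$ and exponent-$s$ scales as well. Applying the already proved solvability with exponent $s$ produces some $\uvel_s\in\SolS{s}(\torus\times\Omega)$ solving \eqref{Wave_BD_Neumann}; since $\SolS{p}(\torus\times\Omega)\hookrightarrow\SolS{q}(\torus\times\Omega)$ and $\SolS{s}(\torus\times\Omega)\hookrightarrow\SolS{q}(\torus\times\Omega)$, uniqueness at exponent $q$ forces $\uvel=\uvel_s\in\SolS{s}(\torus\times\Omega)$. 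The only genuinely non-formal point in the whole argument is the compactness step for \eqref{Absch_N}, and there the single thing to watch is that the weak limit really inherits the homogeneous Neumann boundary condition; this is why one leans on the continuity (hence weak sequential continuity) of $\TN$ on $\SolS{p}$ recorded in Section~\ref{FunktionSpacesSection}. Everything else is a transcription of the Dirichlet case, and no whole-space or bent-half-space estimate beyond those bundled into Lemmas~\ref{peri_Neumann}, \ref{Bent_Neumann} and \ref{inj_dense_N} is needed.
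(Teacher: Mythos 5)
Your proposal is correct and follows the paper's own argument essentially verbatim: it combines the injectivity, density, and a priori bound from Lemma~\ref{inj_dense_N} with the compactness-and-contradiction step of Lemma~\ref{closed_range_D} (with $\TD$ replaced by $\TN$) to drop the lower-order term, concluding that $\calk$ is a homeomorphism, and it handles the regularity claim by uniqueness at the exponent $\min\{p,s\}$. The paper's proof is simply more terse, citing Lemma~\ref{closed_range_D} rather than spelling out the contradiction argument; the additional remarks you make (weak continuity of $\TN$, why no compatibility condition is needed in the purely periodic subspace) are accurate and consistent with the paper.
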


\begin{proof}
The operator $\calk$ is injectiv and has a dense range by Lemma \ref{inj_dense_N}. As in Lemma \ref{closed_range_D}, we can omit the mid term on the right-hand side in \eqref{est_BD_N} and obtain 
\begin{align}\label{Ungl_3}
\norm{\uvel}_{\SolS{p}}\leq c\left(\norm{\left(\partial_t^2 - \Delta - \lambda\partial_t\Delta\right)\uvel}_p + \norm{\TN\uvel}_{\TPN}\right).
\end{align}
It follows that the range of $\calk$ is also closed. Hence, $\calk$ is a homeomorphism. The unique solvability of \eqref{Wave_BD_Neumann} as well as \eqref{est_BD_Neumann} follows. The regularity assertion follows immediately from the unique solvability of \eqref{Wave_BD_Neumann} in $\SolS{\min\{s, p\}}\left(\torus\times\Omega\right)$.
\end{proof}

\subsection{Proof of the Theorems \ref{max_regD} and \ref{max_regN}}
\begin{proof}[Proof of Theorem \ref{max_regD}]
Existence of a solution $\us\in\WSRD{2}{p}\left(\Omega\right)$ to 
\begin{align}\label{Steady}
\begin{pdeq}
-\Delta\us &= \PR f && \tin\Omega, \\
\us &= \PR g && \ton\partial\Omega
\end{pdeq}
\end{align}
that satisfies \eqref{est_1} is well-known from standard theory on elliptic equations. Via the canonical quotient map $\pi\colon\R\to\torus$, the spaces $\CRiper\left(\R; E(\Omega)\right)$ and $\CRi\left(\torus; E(\Omega)\right)$ are isometrically isomorphic in the norms $\norm{\cdot}_p$ and $\norm{\cdot}_{\SolS{p}}$ for any Banach space $E$. By construction, also the Sobolev spaces $\WSRper{k}{p}\left(\R; E\left(\Omega\right)\right)$ and $\WSR{k}{p}\left(\torus; E\left(\Omega\right)\right)$ are isometrically isomorphic for any Banach space $E$. Hence Lemma \ref{whole} in the case $\Omega = \R^3$, Lemma \ref{peri_Dirichlet} in the case $\Omega = \R_+^3$ and Lemma \ref{BD_reg_D} in the case of a bounded domain provides a solution $\up\in\oPR\WSRper{2}{p}(\R; \LR{p}(\Omega)) \cap \oPR\WSRper{1}{p}\left(\R; \WSR{2}{p}(\Omega)\right)$ to
\begin{align}\label{Periodic}
\begin{pdeq}
\partial_t^2\up - \Delta\up - \lambda\partial_t\Delta\up &= \oPR f && \tin\R\times\Omega, \\
\up &= \oPR g && \ton\R\times\partial\Omega, \\
\up\left(t + \per, x\right) &= \up\left(t, x\right)
\end{pdeq}
\end{align}
that satisfies \eqref{est_2}. Setting $\uvel := \us + \up$, we thus obtain the desired solution to \eqref{damp_waveD}. Assume $v = v_s + v_p$ is another solution to \eqref{damp_waveD} with $v_s\in\WSRD{2}{q_1}\left(\Omega\right)$ and $v_p\in\oPR\WSRper{2}{q_2}(\R; \LR{q_2}(\Omega)) \cap \oPR\WSRper{1}{q_2}\left(\R; \WSR{2}{q_2}(\Omega)\right)$. Since $\up$ and $v_p$ both solve \eqref{Periodic}, the uniqueness statements of the lemmas mentioned above yield $\up = v_p$. Similarly, since both $\us$ and $v_s$ solve \eqref{Steady}, 
$\us - v_s$ is a polynomial of order 1 when $\Omega = \R^3$ or $\Omega = \R^3_+$, and $u_s - v_s = 0$ when $\Omega$ is a bounded domain.
\end{proof}

\begin{proof}[Proof of Theorem \ref{max_regN}]
From Lemma \ref{whole}, Lemma \ref{peri_Neumann} and Lemma \ref{BD_reg_N} we conclude in the same way as in the Dirichlet case the unique solvability of 
\begin{align}
\begin{pdeq}
\partial_t^2\up - \Delta\up - \lambda\partial_t\Delta\up &= \oPR f && \tin\R\times\Omega, \\
\pdn\up &= \oPR g && \ton\R\times\partial\Omega, \\
\up\left(t + \per, x\right) &= \up\left(t, x\right).
\end{pdeq}
\end{align}
If $\PR f$ and $\PR g$ satisfy the compatibility condition
\begin{align*}
\int_\Omega \PR f \, \dx + \int_{\partial\Omega} \PR g \, \dS = 0,
\end{align*}
\textit{i.e.}, if $f$ and $g$ satisfy the condition \eqref{CompCond}, standard theory on elliptic equations yields a solution $\us\in\WSRD{2}{p}\left(\Omega\right)$ to 
\begin{align*}
\begin{pdeq}
-\Delta\us &= \PR f && \tin\Omega, \\
\pdn\us &= \PR g && \ton\partial\Omega.
\end{pdeq}
\end{align*}
The uniqueness assertion of Theorem \ref{max_regN} follows as in the proof of Theorem \ref{max_regD}.
\end{proof}

\section{Nonlinear Problem -- The Kuznetsov Equation}

Existence of a solution to the nonlinear problems \eqref{KuznetsovD} and \eqref{KuznetsovN} shall now be established. We employ a fixed point argument based on the estimates established for the linearized systems \eqref{damp_waveD} and \eqref{damp_waveN} in the previous section.
\begin{thm}\label{max_regKD}
Assume that either $\Omega = \R^3$, $\Omega = \R^3_+$ or $\Omega\subset\R^3$ is a bounded domain with a $C^{1,1}$-smooth boundary. Let $p\in (\frac{5}{2}, 3)$. There is an $\varepsilon>0$ such that for all $f\in\LRper{p}(\R; \LR{p}\left(\Omega\right))$ and $g\in \TPDper(\R\times\partial\Omega)$ satisfying 
\begin{align}
\norm{f}_p + \norm{g}_{\TPD} \leq \varepsilon
\end{align}
there is a solution $\uvel$ to \eqref{KuznetsovD} with
\begin{align}
\uvel(t,x) = \us(x) + \up(t,x) \in \WSRD{2}{p}\left(\Omega\right)\oplus\oPR\WSRper{2}{p}(\R; \LR{p}(\Omega)) \cap \oPR\WSRper{1}{p}\left(\R; \WSR{2}{p}(\Omega)\right).
\end{align}
\end{thm}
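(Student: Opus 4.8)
The plan is to invert the linear part of \eqref{KuznetsovD} with Theorem~\ref{max_regD} (equivalently Corollary~\ref{homeo}) and to treat the quadratic nonlinearity $\mathcal{N}(u):=\partial_t\big(\gamma(\partial_t u)^2+\snorm{\grad u}^2\big)$ as a small perturbation, solving by Banach's fixed point theorem. The first point is that the steady-state and purely periodic parts decouple. Writing $u=\us+\up$ with $\us$ time-independent and $\up\in\SolS p$, a direct computation using $\partial_t\us=0$ gives
\begin{align*}
\mathcal{N}(\us+\up)=2\gamma\,\partial_t\up\,\partial_t^2\up+2\,\grad\up\cdot\grad\partial_t\up+2\,\grad\us\cdot\grad\partial_t\up,
\end{align*}
and since this is $\partial_t$ of a $\per$-periodic function, $\PR\mathcal{N}(\us+\up)=0$. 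Hence the steady part need only solve $-\Delta\us=\PR f$ in $\Omega$, $\us=\PR g$ on $\partial\Omega$, which by standard elliptic theory has a solution $\us\in\WSRD2p(\Omega)$ with $\snorm{\us}_{2,p}\le c\,(\norm{\PR f}_p+\norm{\PR g}_{\TPD})\le c\varepsilon$; fix this $\us$ once and for all.

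It then remains to find $\up\in\SolS p$ solving $\partial_t^2\up-\Delta\up-\lambda\partial_t\Delta\up=\oPR f+\mathcal{N}(\us+\up)$ with $\up=\oPR g$ on the boundary. As $\PR\mathcal{N}(\us+\up)=0$ the right-hand side lies in $\oPR\LR p$, so by Corollary~\ref{homeo} this is equivalent to the fixed point problem $\up=\OprA^{-1}\big(\oPR f+\mathcal{N}(\us+\up),\,\oPR g\big)$ in $\SolS p$, with $\OprA^{-1}$ the bounded inverse furnished by the linear theory; write $c_2$ for its norm.

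The crux is the nonlinear estimate
\begin{align*}
\norm{\mathcal{N}(\us+\up)}_p\le C\big(\norm{\up}_{\SolS p}^2+\snorm{\us}_{2,p}\,\norm{\up}_{\SolS p}\big)
\end{align*}
together with its bilinear Lipschitz analogue $\norm{\mathcal{N}(\us+\up)-\mathcal{N}(\us+v)}_p\le C\big(\norm{\up}_{\SolS p}+\norm{v}_{\SolS p}+\snorm{\us}_{2,p}\big)\norm{\up-v}_{\SolS p}$, and it is here that the range $p\in(\tfrac52,3)$ is used. For the first product, $\partial_t\up$ belongs to the maximal $\LR p$-regularity class $\WSR1p(\torus;\LR p(\Omega))\cap\LR p(\torus;\WSR2p(\Omega))$ of the heat equation, which embeds into $\LR\infty(\torus\times\Omega)$ since $p>\tfrac52$; hence $\norm{\partial_t\up\,\partial_t^2\up}_p\le\norm{\partial_t\up}_\infty\norm{\partial_t^2\up}_p$. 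For the other two products one uses $\grad\partial_t\up\in\LR p(\torus;\WSR1p(\Omega))\hookrightarrow\LR p(\torus;\LR3(\Omega))$ (legitimate because $p\le3\le\tfrac{3p}{3-p}$), together with $\grad\up\in\CR0(\torus;\WSR1p(\Omega))\hookrightarrow\CR0(\torus;\LR{3p/(3-p)}(\Omega))$ and $\grad\us\in\WSRD1p(\Omega)\hookrightarrow\LR{3p/(3-p)}(\Omega)$; on $\Omega=\R^3$ or $\R^3_+$ the last is the Sobolev inequality for the \emph{homogeneous} space, which is a genuine Lebesgue embedding precisely because $p<3$. A Hölder estimate with $\tfrac{3-p}{3p}+\tfrac13=\tfrac1p$ then bounds $\grad\up\cdot\grad\partial_t\up$ and $\grad\us\cdot\grad\partial_t\up$ in $\LR p(\torus\times\Omega)$. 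The Lipschitz version follows by writing the differences bilinearly, e.g.\ $\grad\up\cdot\grad\partial_t\up-\grad v\cdot\grad\partial_t v=\grad\up\cdot\grad\partial_t(\up-v)+\grad(\up-v)\cdot\grad\partial_t v$, and re-running the same splittings. That the $\partial_t$ in the definition of $\mathcal{N}$ may be carried out by the product rule, so that $\mathcal{N}(\us+\up)$ is an honest $\LR p$-function, is routine given $\partial_t\up\in\LR\infty$ and $\partial_t^2\up,\grad\partial_t\up\in\LR p$.

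With these estimates the fixed point argument closes in the usual way. On the ball $B_R:=\set{\up\in\SolS p:\norm{\up}_{\SolS p}\le R}$ one has, using $\norm{\oPR f}_p+\norm{\oPR g}_{\TPD}\le c\varepsilon$ and $\snorm{\us}_{2,p}\le c\varepsilon$,
\begin{align*}
\norm{\OprA^{-1}\big(\oPR f+\mathcal{N}(\us+\up),\oPR g\big)}_{\SolS p}\le c_2 c\varepsilon+c_2C\big(R^2+c\varepsilon R\big);
\end{align*}
choosing $R:=2c_2 c\varepsilon$ and then $\varepsilon$ small enough that $c_2C(R+c\varepsilon)\le\tfrac12$ makes the right-hand side $\le R$, so the map preserves $B_R$, and the Lipschitz estimate makes it a contraction there for $\varepsilon$ small. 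Its fixed point $\up$, together with $\us$, gives the asserted solution $u=\us+\up$ of \eqref{KuznetsovD} (the boundary condition holds since $\TD\us=\PR g$ and $\TD\up=\oPR g$). I expect the main obstacle to be exactly the nonlinear estimate above: showing that the quadratic nonlinearity maps $\SolS p$ together with the steady part into $\LR p(\torus\times\Omega)$ on all three classes of domains simultaneously is what forces $p\in(\tfrac52,3)$ — the lower bound so that the parabolic Sobolev embedding yields $\partial_t\up\in\LR\infty$, the upper bound so that $\grad\us$ is controlled in an honest Lebesgue space on the unbounded domains.
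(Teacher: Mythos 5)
Your proposal is correct and follows essentially the same route as the paper's proof: decouple $\us$ (Poisson problem for $\PR f,\PR g$) from $\up$, set up the fixed point equation $\up=\operatorname{A}^{-1}\big(\oPR f+\mathcal{N}(\us+\up),\oPR g\big)$ using Corollary~\ref{homeo}, and close it with the anisotropic embeddings $\partial_t\up\in\LR\infty$, $\grad\up\in\LR\infty(\torus;\LR{3})$, $\grad\partial_t\up\in\LR{p}(\torus;\LR{\frac{3p}{3-p}})$ and the homogeneous Sobolev bound for $\grad\us$, which is exactly the content of the paper's Lemma~\ref{nonlin} together with \eqref{steadyD}. The only cosmetic differences are that you verify $\PR\mathcal{N}(\us+\up)=0$ directly by noting the nonlinearity is $\partial_t$ of a periodic quantity (rather than deriving \eqref{periodic_waveD} by applying $\oPR$ to the equation) and you pair the Hölder exponents in $\grad\up\cdot\grad\partial_t\up$ symmetrically to the paper's choice, but both lead to the same restriction $p\in(\tfrac52,3)$ and the same contraction estimate.
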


\begin{thm}\label{max_regKN}
Let $\Omega$ and $p$ be as in Theorem \ref{max_regKD}. There is an $\varepsilon>0$ such that for all $f\in\LRper{p}(\R; \LR{p}\left(\Omega\right))$ and $g\in \TPNper(\R\times\partial\Omega)$ satisfying 
\begin{align}
\norm{f}_p + \norm{g}_{\TPN} \leq \varepsilon
\end{align}
and
\begin{align*}
\int_0^\per\int_\Omega f \,\dx\dt + \int_0^\per\int_{\partial\Omega} g \,\dS\dt = 0
\end{align*}
there is a solution $\uvel$ to \eqref{KuznetsovN} with
\begin{align}
\uvel(t,x) = \us(x) + \up(t,x) \in \WSRD{2}{p}\left(\Omega\right)\oplus\oPR\WSRper{2}{p}(\R; \LR{p}(\Omega)) \cap \oPR\WSRper{1}{p}\left(\R; \WSR{2}{p}(\Omega)\right).
\end{align}
\end{thm}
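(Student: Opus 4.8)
The plan is to recast \eqref{KuznetsovN} as a fixed-point equation and solve it by the contraction mapping principle in the maximal-regularity space provided by Theorem \ref{max_regN}, in complete analogy with the Dirichlet case of Theorem \ref{max_regKD}; only the bookkeeping specific to the Neumann condition needs attention. The crucial preliminary observation is that the nonlinear term $\partial_t\bp{\gamma(\partial_t\uvel)^2+\snorm{\grad\uvel}^2}$ is the time-derivative of a $\per$-periodic function and hence has vanishing time-average, $\PR\bb{\partial_t\bp{\gamma(\partial_t\uvel)^2+\snorm{\grad\uvel}^2}}=0$. Two things follow. First, writing a prospective solution as $\uvel=\us+\up$ with $\us=\PR\uvel$ and $\up=\oPR\uvel$, the steady-state part $\us$ decouples from the nonlinearity and is obtained, once and for all, from the elliptic Neumann problem $-\Delta\us=\PR f$ in $\Omega$, $\pdn\us=\PR g$ on $\partial\Omega$; this is solvable by standard elliptic theory (in the bounded case precisely because of the assumed compatibility condition; no condition is needed for the whole or half space), yielding $\us\in\WSRD{2}{p}(\Omega)$ with $\snorm{\us}_{2,p}\leq c\,\varepsilon$. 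Second, since the nonlinear term has zero time-average, it automatically satisfies the compatibility condition needed to apply Theorem \ref{max_regN} to the purely periodic part, so the iteration stays inside the admissible data class.

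It remains to construct $\up$. For $\wvel$ in a ball $B_\rho$ of the space $\oPR\WSRper{2}{p}(\R;\LR{p}(\Omega))\cap\oPR\WSRper{1}{p}(\R;\WSR{2}{p}(\Omega))$, let $\caln(\wvel):=\up$ be the unique solution, furnished by Theorem \ref{max_regN} (equivalently, Corollary \ref{homeo}), of
\begin{equation*}
\partial_t^2\up-\Delta\up-\lambda\partial_t\Delta\up=\oPR f+\partial_t\bp{\gamma(\partial_t\wvel)^2+\snorm{\grad\us+\grad\wvel}^2}\quad\tin\R\times\Omega,
\end{equation*}
subject to $\pdn\up=\oPR g$ on $\R\times\partial\Omega$; here I have used $\partial_t\us=0$. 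If $\up=\caln(\up)$, then $\uvel:=\us+\up$ solves \eqref{KuznetsovN}: splitting that equation into its steady and purely periodic parts recovers exactly the elliptic problem for $\us$ and the fixed-point identity for $\up$. By the estimate in Theorem \ref{max_regN},
\begin{equation*}
\norm{\caln(\wvel)}_{\SolS{p}}\leq c\Bp{\norm{\oPR f}_p+\norm{\oPR g}_{\TPN}+\norm{\partial_t\bp{\gamma(\partial_t\wvel)^2+\snorm{\grad\us+\grad\wvel}^2}}_p},
\end{equation*}
so the entire argument reduces to estimating the nonlinear term in $\LR{p}(\R\times\Omega)$.

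This nonlinear estimate is carried out via anisotropic Sobolev embeddings of the maximal-regularity space, and it is here that the hypothesis $p\in(\tfrac{5}{2},3)$ enters. After expanding $\partial_t$, the nonlinear term is a finite sum of products of the types $\partial_t\wvel\,\partial_t^2\wvel$, $\grad\wvel\cdot\partial_t\grad\wvel$ and $\grad\us\cdot\partial_t\grad\wvel$. Now $\partial_t\wvel$ belongs to the parabolic class $\WSRper{1}{p}(\R;\LR{p}(\Omega))\cap\LRper{p}(\R;\WSR{2}{p}(\Omega))$, which embeds into $\LR{\infty}(\R\times\Omega)$ once $p>\tfrac{5}{2}$ (parabolic trace embedding in three spatial dimensions), whereas $\partial_t^2\wvel\in\LR{p}(\R\times\Omega)$; hence the first product lies in $\LR{p}$. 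For the remaining products, $\wvel\in\WSRper{1}{p}(\R;\WSR{2}{p}(\Omega))$ embeds into $\LR{\infty}(\R;\WSR{2}{p}(\Omega))$ (one time-derivative in one dimension), and $\WSR{1}{p}(\Omega)\hookrightarrow\LR{2p}(\Omega)$ for $p<3$, so that $\grad\wvel\in\LR{\infty}(\R;\LR{2p}(\Omega))$; likewise $\partial_t\grad\wvel\in\LRper{p}(\R;\LR{2p}(\Omega))$, and $\grad\us\in\LR{2p}(\Omega)$ by the same Sobolev embedding applied to $\us\in\WSRD{2}{p}(\Omega)$. Hölder's inequality then places $\grad\wvel\cdot\partial_t\grad\wvel$ and $\grad\us\cdot\partial_t\grad\wvel$ in $\LR{p}(\R\times\Omega)$ as well. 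Collecting these bounds,
\begin{equation*}
\norm{\partial_t\bp{\gamma(\partial_t\wvel)^2+\snorm{\grad\us+\grad\wvel}^2}}_p\leq C\bp{\snorm{\us}_{2,p}+\norm{\wvel}_{\SolS{p}}}^2,
\end{equation*}
and, since the nonlinearity is quadratic, the elementary identities $a^2-b^2=(a-b)(a+b)$ and $\snorm{a+b}^2-\snorm{a+c}^2=(b-c)\cdot(2a+b+c)$ yield the matching Lipschitz bound
\begin{equation*}
\norm{\caln(\wvel)-\caln(\twvel)}_{\SolS{p}}\leq C\bp{\snorm{\us}_{2,p}+\norm{\wvel}_{\SolS{p}}+\norm{\twvel}_{\SolS{p}}}\norm{\wvel-\twvel}_{\SolS{p}}.
\end{equation*}
Choosing $\rho$ a fixed multiple of $\varepsilon$ and then $\varepsilon$ small enough, $\caln$ maps $B_\rho$ into itself and is a contraction on it; its unique fixed point $\up$, together with $\us$, is the asserted solution.

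The only genuinely new point relative to the Dirichlet argument of Theorem \ref{max_regKD}, and thus the main obstacle, is the nonlinear estimate above: one must check that every quadratic combination of the first-order and mixed third-order derivatives of an element of $\SolS{p}$ and of the first-order derivatives of $\us\in\WSRD{2}{p}(\Omega)$ lands in $\LR{p}(\R\times\Omega)$. This is precisely what pins down the range of exponents: $p>\tfrac{5}{2}$ so that $\partial_t\up$ is bounded (via the parabolic embedding $\WSR{1}{p}(\torus;\LR{p}(\Omega))\cap\LR{p}(\torus;\WSR{2}{p}(\Omega))\hookrightarrow\LR{\infty}$ valid for $\dim\Omega=3$), and $p<3$ so that $\WSR{1}{p}(\Omega)\hookrightarrow\LR{2p}(\Omega)$, which controls the spatial-gradient factors, including $\grad\us$. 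Everything else --- the steady/periodic splitting, the automatic preservation of the Neumann compatibility condition, and the fixed-point bookkeeping --- is word-for-word the Dirichlet case.
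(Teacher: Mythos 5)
Your proposal is correct and follows essentially the same route as the paper's (elliptically phrased) proof of Theorem~\ref{max_regKN}, which simply transfers the argument of Theorem~\ref{max_regKD}: decompose $\uvel=\us+\up$ via $\PR$ and $\oPR$, observe that the nonlinearity is a time derivative and hence has zero time average so that $\us$ is obtained from the elliptic Neumann problem (solvable thanks to the compatibility condition \eqref{CompCond}), and run a contraction argument for $\up$ based on Corollary~\ref{homeo} and the quadratic estimates. The only cosmetic difference is in the nonlinear bounds: you pair $\LR{2p}(\Omega)\times\LR{2p}(\Omega)$ in H\"older's inequality, whereas the paper's Lemma~\ref{nonlin} pairs $\LR{3}(\Omega)\times\LR{3p/(3-p)}(\Omega)$; both are valid for $p\in(\tfrac52,3)$ and lead to the same conclusion.
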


To prove Theorem \ref{max_regKD} and \ref{max_regKN}, we shall need estimates of the nonlinear terms in \eqref{KuznetsovD} and \eqref{KuznetsovN}. For this purpose we utilize the following embedding properties of time-periodic Sobolev spaces.
\begin{lem}\label{SobEmbeddingThm}
Let $\Omega$ be as in Theorem \ref{max_regKD} and $p\in(1,\infty)$.
Assume that $\alpha\in\bb{0,2}$ and $q_0,r_0\in[p,\infty]$ satisfy
\begin{align}\label{SobEmbeddingThm_Condp0}
\begin{pdeq}
&r_0\leq \frac{2p}{2-\alpha p} && \tif\ \alpha p<2,\\
&r_0<\infty && \tif\ \alpha p =2,\\
&r_0\leq\infty && \tif\ \alpha p >2,
\end{pdeq}
\qquad
\begin{pdeq}
&q_0\leq \frac{np}{n-(2-\alpha) p} && \tif\  \np{2-\alpha}p<{n},\\
&q_0<\infty && \tif\ \np{2-\alpha}p={n},\\
&q_0\leq\infty && \tif\ \np{2-\alpha}p>{n},
\end{pdeq}
\end{align}
and that $\beta\in\bb{0,1}$ and $q_1,r_1\in[p,\infty]$ satisfy
\begin{align}\label{SobEmbeddingThm_Condp1}
\begin{pdeq}
&r_1\leq \frac{2p}{2-\beta p} && \tif\ \beta p<2,\\
&r_1<\infty && \tif\ \beta p =2,\\
&r_1\leq\infty && \tif\ \beta p >2,
\end{pdeq}
\qquad
\begin{pdeq}
&q_1\leq \frac{np}{n-(1-\beta) p} && \tif\  \np{1-\beta}p<{n},\\
&q_1<\infty && \tif\ \np{1-\beta}p={n},\\
&q_1\leq\infty && \tif\ \np{1-\beta}p>{n}.
\end{pdeq}
\end{align}
Then for all $\uvel\in\WSRper{1,2}{p}(\R\times\Omega) := \WSRper{1}{p}(\R; \LR{p}(\Omega)) \cap \LRper{p}\left(\R; \WSR{2}{p}(\Omega)\right)$:
\begin{align}\label{SobEmbeddingThm_Est}
\norm{\uvel}_{\LRper{r_0}\np{\R;\LR{q_0}(\Omega)}}
+\norm{\grad\uvel}_{\LRper{r_1}\np{\R;\LR{q_1}(\Omega)}}  \leq
\Cc{C}\norm{\uvel}_{1,2,p},
\end{align}
with $\Cclast{C}=\Cclast{C}(\per,n,\Omega,r_0,q_0,r_1,q_1)$.
\end{lem}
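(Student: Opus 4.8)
\emph{Overview.} The plan is, first, to remove the boundary by extending $\uvel$ from $\Omega$ to $\R^n$; second, to establish the ``parabolic'' interpolation embedding $\WSRper{1,2}{p}(\R\times\R^n)\embeds\WSRper{\sigma}{p}(\R;\WSR{2(1-\sigma)}{p}(\R^n))$ for every $\sigma\in[0,1]$; and third, to deduce \eqref{SobEmbeddingThm_Est} by composing a one-dimensional (time) Sobolev embedding with an $n$-dimensional (space) one, choosing $\sigma=\alpha/2$ for the term involving $\uvel$ and $\sigma=\beta/2$ for the term involving $\grad\uvel$.

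\emph{Reduction to $\R^n$.} Each of the three admissible domains is an extension domain, so I would fix a (Stein-type, $j$-independent) extension operator $E\colon\WSR{j}{p}(\Omega)\to\WSR{j}{p}(\R^n)$ bounded for $j\in\{0,1,2\}$. Since $E$ acts only on the spatial variable it induces a bounded operator $\WSRper{1,2}{p}(\R\times\Omega)\to\WSRper{1,2}{p}(\R\times\R^n)$ that agrees with the identity upon restriction to $\Omega$. It therefore suffices to prove \eqref{SobEmbeddingThm_Est} with $\Omega$ replaced by $\R^n$; the estimate on $\Omega$ then follows by restricting $E\uvel$, and the dependence of the constant on $\Omega$ enters only through $\norm{E}$.

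\emph{The interpolation step.} For $\sigma\in[0,1]$ the claim is
\begin{align*}
\WSRper{1,2}{p}(\R\times\R^n)\embeds\WSRper{\sigma}{p}\bp{\R;\WSR{2(1-\sigma)}{p}(\R^n)},
\end{align*}
where the target is read as a Bessel-potential space --- harmless, since the Sobolev embeddings into Lebesgue spaces used below hold verbatim for Bessel-potential spaces. This is the mixed-derivative theorem and may simply be cited (cf.\ \cite{DHP}); alternatively it can be proved with the machinery already set up. Transferring to the group $\grp=\torus\times\R^n$ and noting that the Hilbert transform absorbs the discrepancy between the multipliers $ik$ and $\snorm{k}$, the norm $\norm{\cdot}_{1,2,p}$ is equivalent to $\norm{\iFT_\grp\nb{(1+\snorm{k}+\snorm{\xi}^2)\FT_\grp\nb{\cdot}}}_{p}$, so it remains to check that
\begin{align*}
m(k,\xi):=\frac{(1+\snorm{k})^{\sigma}\,(1+\snorm{\xi}^{2})^{1-\sigma}}{1+\snorm{k}+\snorm{\xi}^{2}}
\end{align*}
is an $\LR{p}(\grp)$-multiplier. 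Its boundedness is the weighted arithmetic--geometric mean inequality $(1+\snorm{k})^{\sigma}(1+\snorm{\xi}^{2})^{1-\sigma}\leq\sigma(1+\snorm{k})+(1-\sigma)(1+\snorm{\xi}^{2})$, and after transferring to $\R^{n+1}$ via Lemma \ref{transference} and inserting a cut-off near $k=0$ (so that the non-smoothness of $\snorm{k}$ is irrelevant), the Marcinkiewicz conditions \eqref{Mar} are verified by a computation of the same flavour as --- but shorter than --- the one in the proof of Lemma \ref{peri_whole}; Lemma \ref{Marcinkiewicz} then yields the multiplier bound.

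\emph{Combining the embeddings, and the main obstacle.} With $\sigma=\alpha/2$, the interpolation step gives $\uvel\in\WSRper{\alpha/2}{p}(\R;\WSR{2-\alpha}{p}(\R^n))$ with norm $\lesssim\norm{\uvel}_{1,2,p}$; the conditions on $q_0$ in \eqref{SobEmbeddingThm_Condp0} are precisely those for which $\WSR{2-\alpha}{p}(\R^n)\embeds\LR{q_0}(\R^n)$, whence $\uvel\in\WSRper{\alpha/2}{p}(\R;\LR{q_0}(\R^n))$; and the conditions on $r_0$ are precisely those for which the one-dimensional Banach-space-valued embedding $\WSRper{\alpha/2}{p}(\R;Y)\embeds\LRper{r_0}(\R;Y)$ holds with $Y=\LR{q_0}(\R^n)$ --- and this last embedding requires no geometric hypothesis on $Y$, being a consequence of the Bessel-potential representation together with the Banach-valued Young inequality. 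Composing yields $\norm{\uvel}_{\LRper{r_0}(\R;\LR{q_0}(\R^n))}\leq C\norm{\uvel}_{1,2,p}$. The same argument with $\sigma=\beta/2$, now using $\grad\colon\WSR{2-\beta}{p}(\R^n)\to\WSR{1-\beta}{p}(\R^n)$ and the embeddings matching \eqref{SobEmbeddingThm_Condp1}, gives $\norm{\grad\uvel}_{\LRper{r_1}(\R;\LR{q_1}(\R^n))}\leq C\norm{\uvel}_{1,2,p}$; adding the two and restricting back to $\Omega$ proves \eqref{SobEmbeddingThm_Est}. I expect the main obstacle to be the interpolation step: controlling every intermediate fractional mixed norm by the single parabolic norm $\norm{\cdot}_{1,2,p}$, uniformly in $\sigma$ --- immediate if one cites the mixed-derivative theorem, but otherwise requiring the multiplier estimate above and careful treatment of the $k=0$ mode. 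A minor, purely bookkeeping nuisance will be matching the endpoint cases ($\alpha p\in\{1,2\}$, $\beta p\in\{1,2\}$, $(2-\alpha)p=n$, $(1-\beta)p=n$) of the two Sobolev embeddings to the precise exponent conditions \eqref{SobEmbeddingThm_Condp0}--\eqref{SobEmbeddingThm_Condp1}.
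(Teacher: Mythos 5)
The paper offers no proof of this lemma at all: the entire proof reads ``See \cite[Theorem 4.1]{GaldiKyed}.'' So there is nothing in-text to compare against, and your proposal has to be judged on its own merits. It holds up. The three-part strategy --- extend from $\Omega$ to $\R^n$, prove the mixed-derivative (``parabolic interpolation'') embedding $\WSRper{1,2}{p}(\R\times\R^n)\embeds\WSRper{\sigma}{p}\np{\R;\WSR{2(1-\sigma)}{p}(\R^n)}$ via the group Fourier transform, transference, and Marcinkiewicz, and then compose a spatial Sobolev embedding with a Banach-space-valued temporal embedding at $\sigma=\alpha/2$ (resp. $\sigma=\beta/2$) --- is a correct and natural route, and it has the advantage of reusing exactly the multiplier machinery (Lemmas \ref{transference} and \ref{Marcinkiewicz}, and the cut-off trick from Lemma \ref{peri_whole}) already set up in Section \ref{pre}. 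Your parametrization matches the exponent conditions precisely: the $q_0$ alternatives are the fractional Sobolev embedding $\WSR{2-\alpha}{p}(\R^n)\embeds\LR{q_0}$, and the $r_0$ alternatives are the one-dimensional torus embedding $\WSR{\alpha/2}{p}(\torus;Y)\embeds\LR{r_0}(\torus;Y)$, with the discrete dual variable making the smooth modification of $\snorm{k}$ near $k=0$ harmless exactly as in Lemma~\ref{peri_whole}.

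Two small points worth flagging, both of which you partially anticipate under ``bookkeeping.'' First, the equality case $r_0=\tfrac{2p}{2-\alpha p}$ (and $r_1=\tfrac{2p}{2-\beta p}$) is not delivered by Young's inequality, which only gives the strict inequality; one needs the Hardy--Littlewood--Sobolev / fractional-integration estimate for the (periodized) Bessel kernel. This still requires no geometric hypothesis on $Y$, since one may pass from $\norm{J^{s}\uvel(t)}_Y\leq\bp{\snorm{G_s}\ast\norm{\uvel(\cdot)}_Y}(t)$ to the scalar HLS on the torus; but it is HLS, not Young, that closes the endpoint. Second, when you say the $\norm{\cdot}_{1,2,p}$ norm is equivalent to the multiplier norm with symbol $1+\snorm{k}+\snorm{\xi}^2$, you should verify the lower bound as well (that $\tfrac{ik}{1+\psi(k)+\snorm{\xi}^2}$, $\tfrac{\snorm{\xi}^2}{1+\psi(k)+\snorm{\xi}^2}$ and $\tfrac{1}{1+\psi(k)+\snorm{\xi}^2}$ are $\LR{p}$-multipliers, with $\psi$ the smooth replacement of $\snorm{\cdot}$ near zero); this is routine Marcinkiewicz checking of the same flavour as what you already describe, but it is a genuine additional verification, not a free consequence of the boundedness of the Hilbert transform alone.

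Overall: correct approach, sound in all essentials, with the two minor refinements above; it effectively supplies the argument that the paper outsources to \cite{GaldiKyed}.
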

\begin{proof}
See \cite[Theorem 4.1]{GaldiKyed}.
\end{proof}
Furthermore, we make use of the following lemma.

\begin{lem}\label{nonlin}
Let $\Omega$ and $p$ be as in Theorem \ref{max_regKD}. Then 
\begin{align*}
\norm{\partial_t v\partial_t^2\uvel}_p + \norm{\grad v\cdot\partial_t\grad\uvel}_p \leq c\norm{v}_{\SolS{p}}\norm{\uvel}_{\SolS{p}}
\end{align*}
holds for any $\uvel, v\in\SolS{p}\left(\torus\times\Omega\right)$. 
\end{lem}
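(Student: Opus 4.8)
The plan is to estimate each of the two products $\partial_t v\,\partial_t^2\uvel$ and $\grad v\cdot\partial_t\grad\uvel$ in $\LR{p}(\torus\times\Omega)$ by H\"older's inequality, after placing the individual factors into suitable anisotropic Lebesgue spaces by means of the embedding Lemma \ref{SobEmbeddingThm}. As a preliminary step I would note that, for every $w\in\SolS{p}(\torus\times\Omega)$, both $w$ and $\partial_t w$ belong to $\WSRper{1,2}{p}(\R\times\Omega)=\WSRper{1}{p}(\R;\LR{p}(\Omega))\cap\LRper{p}(\R;\WSR{2}{p}(\Omega))$, with
\begin{align*}
\norm{w}_{1,2,p}+\norm{\partial_t w}_{1,2,p}\leq c\,\norm{w}_{\SolS{p}}.
\end{align*}
This follows at once from the definition of $\norm{\cdot}_{\SolS{p}}$ --- which controls $\partial_t^2 w$ in $\LR{p}$ and $w$ in $\WSR{1}{p}(\torus;\WSR{2}{p}(\Omega))$ --- together with the canonical isometry between $\torus$- and $\R$-periodic function spaces. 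Thus Lemma \ref{SobEmbeddingThm} (with $n=3$) becomes applicable to each of $v$, $\partial_t v$, $\uvel$ and $\partial_t\uvel$.

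For the product $\partial_t v\,\partial_t^2\uvel$, the only information available about $\partial_t^2\uvel$ is that $\norm{\partial_t^2\uvel}_p\leq\norm{\uvel}_{\SolS{p}}$, so the remaining factor must be absorbed into $\LR{\infty}$. I would apply Lemma \ref{SobEmbeddingThm} to $\partial_t v\in\WSRper{1,2}{p}(\R\times\Omega)$ with $r_0=q_0=\infty$ and an exponent $\alpha$ chosen with $\alpha p>2$ and $\np{2-\alpha}p>3$; such an $\alpha\in(0,2)$ exists precisely because $p>\tfrac52$, which is exactly the condition making the interval $\bp{\tfrac2p,\,2-\tfrac3p}$ non-empty (and it lies in $(0,2)$ when $p\in(\tfrac52,3)$). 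This gives $\partial_t v\in\LRper{\infty}\np{\R;\LR{\infty}(\Omega)}$ with $\norm{\partial_t v}_{\LR{\infty}(\torus\times\Omega)}\leq c\,\norm{\partial_t v}_{1,2,p}\leq c\,\norm{v}_{\SolS{p}}$, and then H\"older yields $\norm{\partial_t v\,\partial_t^2\uvel}_p\leq\norm{\partial_t v}_{\LR{\infty}(\torus\times\Omega)}\norm{\partial_t^2\uvel}_p\leq c\,\norm{v}_{\SolS{p}}\norm{\uvel}_{\SolS{p}}$.

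For the product $\grad v\cdot\partial_t\grad\uvel$ I would place each of $\grad v$ and $\partial_t\grad\uvel=\grad\np{\partial_t\uvel}$ into $\LR{2p}(\torus\times\Omega)$; a pointwise Cauchy--Schwarz inequality followed by H\"older in $(t,x)$ with exponents $2p,2p,p$ then gives $\norm{\grad v\cdot\partial_t\grad\uvel}_p\leq\norm{\grad v}_{\LR{2p}(\torus\times\Omega)}\norm{\partial_t\grad\uvel}_{\LR{2p}(\torus\times\Omega)}$. To obtain $\grad w\in\LR{2p}(\torus\times\Omega)$ for $w\in\WSRper{1,2}{p}(\R\times\Omega)$, I would apply Lemma \ref{SobEmbeddingThm} with $r_1=q_1=2p$ and $\beta:=1-\tfrac{3}{2p}$: one checks $\beta\in(0,1)$, $\beta p=p-\tfrac32<2$ and $\np{1-\beta}p=\tfrac32<3$ for $p\in(\tfrac52,3)$, so that the admissibility conditions \eqref{SobEmbeddingThm_Condp1} reduce to $2p\leq\tfrac{2p}{7/2-p}$ (which holds iff $p\geq\tfrac52$) and $2p\leq\tfrac{3p}{3/2}=2p$ (equality). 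Applying this with $w=v$ and $w=\partial_t\uvel$ and invoking the preliminary bounds produces $\norm{\grad v}_{\LR{2p}(\torus\times\Omega)}\leq c\,\norm{v}_{\SolS{p}}$ and $\norm{\partial_t\grad\uvel}_{\LR{2p}(\torus\times\Omega)}\leq c\,\norm{\uvel}_{\SolS{p}}$; adding the two contributions finishes the proof.

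The estimates are otherwise routine, and the only point requiring care is the bookkeeping of Sobolev exponents. The restriction $p\in(\tfrac52,3)$ of Theorem \ref{max_regKD} is precisely what both sub-estimates need: $p>\tfrac52$ is required to reach $\LR{\infty}$ in space and time for $\partial_t v$ (equivalently, to make $2p$ an admissible exponent for first-order derivatives), while $p<3$ --- in fact $p<\tfrac72$ would suffice here --- keeps $\beta p<2$. No compactness, duality or further PDE input beyond Lemma \ref{SobEmbeddingThm} enters.
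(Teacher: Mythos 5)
Your proof is correct and uses the same two tools as the paper (Lemma \ref{SobEmbeddingThm} plus H\"older), but the exponent bookkeeping for the second product is genuinely different. For $\partial_t v\,\partial_t^2\uvel$ both arguments coincide: $\partial_t^2\uvel$ can only be controlled in $\LR{p}$, so $\partial_t v$ must land in $\LR{\infty}(\torus\times\Omega)$, which requires $p>\tfrac52$; the paper simply fixes $\alpha=\tfrac45$ where you parametrize the admissible window. For $\grad v\cdot\partial_t\grad\uvel$, however, the paper uses an \emph{anisotropic, asymmetric} H\"older split, placing $\grad v$ in $\LR{\infty}(\torus;\LR{3}(\Omega))$ (via $\beta=\tfrac45$) and $\partial_t\grad\uvel$ in $\LR{p}(\torus;\LR{\frac{3p}{3-p}}(\Omega))$ (via $\beta=0$), the latter embedding being what forces $p<3$ in this step. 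You instead use the symmetric split $\LR{2p}\times\LR{2p}$ with a single intermediate $\beta=1-\tfrac{3}{2p}$ for both factors; this is equally valid, slightly cleaner since one embedding serves both factors, and, as you note, it is admissible up to $p<\tfrac72$ rather than $p<3$. That extra slack is of no use here --- the restriction $p<3$ is imposed elsewhere, e.g.\ in the estimate \eqref{steadyD} for $\grad\us$ --- but it is a correct observation and a fair indication that the two proofs are not literally the same.
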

\begin{proof}
Clearly, $\partial_t v\in\WSR{1}{p}\left(\torus; \LR{p}\left(\Omega\right)\right)\cap\LR{p}\left(\torus; \WSR{2}{p}\left(\Omega\right)\right)$ for any $v\in\SolS{p}\left(\torus\times\Omega\right)$. Using \eqref{SobEmbeddingThm_Condp0} with $\alpha = \frac{4}{5}$, we deduce for $p\in (\frac{5}{2}, \infty)$ 
\begin{align*}
\norm{\partial_t v}_{\infty} \leq c\norm{v}_{\SolS{p}}.
\end{align*}
It thus follows from H\"older's inequality for $p\in(\frac{5}{2}, \infty)$ that
\begin{align}\label{nonlin_1}
\norm{\partial_t v\partial_t^2\uvel}_p \leq \norm{\partial_t v}_{\infty}\norm{\partial_t^2\uvel}_{p} \leq c\norm{\partial_t v}_{1,2,p}\norm{u}_{\SolS{p}} \leq c\norm{v}_{\SolS{p}}\norm{\uvel}_{\SolS{p}}.
\end{align}
A further application of H\"older's inequality yields
\begin{align*}
\norm{\grad v\cdot\partial_t\grad\uvel}_p \leq \norm{\grad v}_{\LR{\infty}\left(\torus; \LR{3}\left(\Omega\right)\right)} \norm{\partial_t\grad\uvel}_{\LR{p}(\torus; \LR{\frac{3p}{3-p}}\left(\Omega\right))}.
\end{align*}
From Lemma \ref{SobEmbeddingThm} with $\beta=0$ we obtain for $p\in(1, 3)$ 
\begin{align*}
\norm{\partial_t\grad\uvel}_{\LR{p}(\torus; \LR{\frac{3p}{3-p}}\left(\Omega\right))}\leq c\norm{\partial_t\uvel}_{1,2,p} \leq c\norm{\uvel}_{\SolS{p}}.
\end{align*}
Choosing $\beta = \frac{4}{5}$, Lemma \ref{SobEmbeddingThm} yields for all $p\in (\frac{5}{2}, \infty)$
\begin{align*}
\norm{\grad v}_{\LR{\infty}\left(\torus; \LR{3}\left(\Omega\right)\right)} \leq c\norm{v}_{\SolS{p}}.
\end{align*}
Hence we obtain for $p\in (\frac{5}{2}, 3)$
\begin{align}\label{nonlin_2}
\norm{\grad v\cdot\partial_t\grad\uvel}_p \leq c\norm{v}_{\SolS{p}}\norm{\uvel}_{\SolS{p}}.
\end{align}
The lemma now follows from \eqref{nonlin_1} and \eqref{nonlin_2}.
\end{proof}

\begin{proof}[Proof of Theorem \ref{max_regKD}]
We shall establish existence of a solution $u$ to \eqref{KuznetsovD} of the form $u = \us + \up$, where $\us\in\WSRD{2}{p}\left(\Omega\right)$ is a solution to the steady-state problem 
\begin{align}\label{steady_waveD}
\begin{pdeq}
-\Delta\us &= \PR f && \tin\Omega, \\
\us &= \PR g && \ton\partial\Omega
\end{pdeq}
\end{align}
and $\up\in\oPR\WSRper{2}{p}(\R; \LR{p}(\Omega)) \cap \oPR\WSRper{1}{p}\left(\R; \WSR{2}{p}(\Omega)\right)$ a solution to the purely periodic problem
\begin{align}\label{periodic_waveD}
\begin{pdeq}
\partial_{t}^2\up-\Delta\up - \lambda\partial_t\Delta\up - \partial_t\bp{\gamma\np{\partial_t\up}^2+\snorm{\grad\up}^2} \\
- 2\grad\us\cdot\grad\partial_t\up &= \oPR f && \tin\R\times\Omega, \\
\up &= \oPR g && \ton\R\times\partial\Omega.
\end{pdeq}
\end{align}
Standard theory for elliptic problems yields for every $\PR f\in \LR{p}\left(\Omega\right)$ a solution $\us\in\WSRD{2}{p}\left(\Omega\right)$ to \eqref{steady_waveD} with
\begin{align}\label{steadyD}
\norm{\grad\us}_{\frac{3p}{3-p}} \leq \norm{\grad^2\us}_p \leq c\left(\norm{\PR f}_p + \norm{\PR g}_{\TPD}\right) \qquad \forall p\in (1, 3).
\end{align}
The solution to \eqref{periodic_waveD} shall be obtained as a fixed point of the mapping
\begin{align*}
&\mathcal{N}\colon\oPR\WSRper{2}{p}(\R; \LR{p}(\Omega)) \cap \oPR\WSRper{1}{p}\left(\R; \WSR{2}{p}(\Omega)\right) \\
&\qquad\qquad\qquad\qquad\qquad\to \oPR\WSRper{2}{p}(\R; \LR{p}(\Omega)) \cap \oPR\WSRper{1}{p}\left(\R; \WSR{2}{p}(\Omega)\right)\\
&\mathcal{N}\left(\up\right) := \operatorname{A}^{-1}\left(\partial_t\left(\gamma\left(\partial_t\up\right)^2 + \left|\grad\up\right|^2\right) + 2\grad\us\cdot\grad\partial_t\up + \oPR f, \oPR g\right)
\end{align*}
with $\operatorname{A}$ as in Corollary \ref{homeo}.
We shall verify that $\mathcal{N}$ is a contracting self-mapping on a ball of sufficiently small radius. For this purpose, let $\rho > 0$ and consider some $\up\in \oPR\WSRper{2}{p}(\R; \LR{p}(\Omega)) \cap \oPR\WSRper{1}{p}\left(\R; \WSR{2}{p}(\Omega)\right)\cap B_\rho$. 
Since $\operatorname{A}$ is a homeomorphism, we obtain
\begin{align*}
\norm{\mathcal{N}\left(\up\right)}_{\SolS{p}} &c\,\leq c\,\norm{\operatorname{A}^{-1}} \ \Big(\norm{\partial_t\up\partial_t^2\up}_p + \norm{\grad\up\cdot\partial_t\grad\up}_p + \norm{\grad\us\cdot\grad\partial_t\up}_p \\ &\qquad+ \norm{\oPR f}_p + \norm{\oPR g}_{\TPD}\Big).
\end{align*}
Utilizing Lemma \ref{nonlin}, we find that
\begin{align*}
\norm{\partial_t\up\partial_t^2\up}_p + \norm{\grad\up\cdot\partial_t\grad\up}_p \leq c\,\norm{\up}_{\SolS{p}}^2. 
\end{align*}
Employing \eqref{steadyD} and Lemma \ref{SobEmbeddingThm} with $\beta=0$, we also obtain
\begin{align*}
\norm{\grad\us\cdot\grad\partial_t\up}_p 
\leq \norm{\grad\us}_{\LR{\frac{3p}{3-p}}(\Omega)}\norm{\grad\partial_t\up}_{\LRper{p}\np{\R;\LR{3}(\Omega)}}
\leq c\,\norm{\grad^2\us}_{p}\norm{\up}_{\SolS{p}}.
\end{align*}
Consequently, 
\begin{align*}
\norm{\mathcal{N}\left(\up\right)}_{\SolS{p}}
& \leq c\left(\rho^2 + \varepsilon\rho + \varepsilon\right).
\end{align*}
Choosing $\varepsilon = \rho^2$ and $\rho$ sufficiently small, we have $c\left(\rho^2 + \varepsilon\rho + \varepsilon\right) \leq \rho$, \textit{i.e.}, $\mathcal{N}$ is a self-mapping on $B_\rho$. Moreover
\begin{align*}
\norm{\mathcal{N}&\left(\up\right) - \mathcal{N}\left(v_p\right)}_{\SolS{p}} \leq c\,\norm{\operatorname{A}^{-1}} \ \Big(\norm{\partial_t\up\partial_t^2\up - \partial_t v_p\partial_t^2 v_p}_p \\
&\quad + \norm{\grad\up\cdot\partial_t\grad\up - \grad v_p\cdot\partial_t\grad v_p}_p + \norm{\grad\us\cdot\partial_t\grad\up - \grad\us\cdot\partial_t\grad v_p}_p\Big) \\
&\leq c\Big(\norm{\partial_t\up\partial_t^2\left(\up - v_p\right)}_p + \norm{\partial_t^2 v_p\partial_t\left(\up - v_p\right)}_p + \norm{\grad\up\cdot\partial_t\grad\left(\up - v_p\right)}_p \\ 
&\quad + \norm{\partial_t\grad v_p\cdot\grad\left(\up - v_p\right)}_p + \norm{\grad\us\cdot\partial_t\grad\left(\up - v_p\right)}_p\Big) \\
&\leq c\left(4\rho\norm{\up - v_p}_{\SolS{p}} + \varepsilon\norm{\up - v_p}_{\SolS{p}}\right) = c(4\rho + \rho^2) \norm{\up - v_p}_{\SolS{p}}.
\end{align*}
Therefore, if $\rho$ is sufficiently small $\mathcal{N}$ becomes a contracting self-mapping. By the contraction mapping principle, existence of a fixed point for $\mathcal{N}$ follows. This concludes the proof.
\end{proof}

\begin{proof}[Proof of Theorem \ref{max_regKN}]
Similar to the proof of Theorem \ref{max_regKD}.
\end{proof}

\bibliographystyle{abbrv}

\end{document}